\documentclass[11pt]{amsart}

\usepackage{graphicx}%
\usepackage{multirow}%
\usepackage{amsmath,amssymb,amsfonts}%
\usepackage{amsthm}%
\usepackage{mathrsfs}%
\usepackage[title]{appendix}%
\usepackage{xcolor}%
\usepackage{textcomp}%
\usepackage{manyfoot}%
\usepackage{booktabs}%
\usepackage{algorithm}%
\usepackage{algorithmicx}%
\usepackage{algpseudocode}%
\usepackage{listings}%
\usepackage[colorlinks=true, linkcolor=red, urlcolor=blue, citecolor=blue]{hyperref}

\theoremstyle{definition}
\newtheorem{theorem}{Theorem}[section]

\newtheorem{lemma}[theorem]{Lemma}
\newtheorem{corollary}[theorem]{Corollary}
\theoremstyle{definition}
\newtheorem{definition}[theorem]{Definition}

\theoremstyle{remark}
\newtheorem{remark}[theorem]{Remark}

\allowdisplaybreaks[1]
\usepackage[left=2.5cm, right=2.5cm, top=3cm, bottom=3cm]{geometry}

\title[Orlicz Function estimations of the $a$-Davis--Wielandt Radius in $C^*$-algebra]{On the Orlicz Function estimations of the $a$-Davis--Wielandt Radius in $C^*$-algebra}

\author[D. Bhattacharya, F. Kittaneh, A. Patra] {{Debarati Bhattacharya}$^{1}$, {Fuad Kittaneh}$^{2,3}$, {Arnab Patra}$^{4}$}

\address{$^{[1]}$ Department of Mathematics, Indian Institute of Technology Bhilai, Chhattisgarh, India 491002.}
\email{\url{debaratib@iitbhilai.ac.in}}

\address{$^{[2]}$ Department of Mathematics, The University of Jordan, Amman, Jordan.}
\email{\url{fkitt@ju.edu.jo}}

\address{$^{[3]}$ Department of Mathematics, Korea University, Seoul 02841, South Korea.}
\email{\url{fkitt@ju.edu.jo}}

\address{$^{[4]}$ Department of Mathematics, Indian Institute of Technology Bhilai,  Chhattisgarh, India 491002.}
\email{\url{arnabp@iitbhilai.ac.in}}

\subjclass[2020]{Primary 47A12, 46L05; Secondary 47A30}
\keywords{Davis--Wielandt radius, $a$-Davis--Wielandt radius, $C^*$-algebra, Inequality, Orlicz function}

\begin{document}




\begin{abstract}

We present a systematic study of $a$-Davis--Wielandt radius estimates for elements in a unital $C^*$-algebra through an Orlicz function approach. We derive novel bounds for the algebraic $a$-Davis--Wielandt radius and further obtain estimates for the classical algebraic Davis--Wielandt radius via polar decomposition and the Moore--Penrose inverse. The results obtained in this work extend, generalize, and unify a variety of established inequalities in the literature as special cases.
\end{abstract}

\maketitle

\section{Introduction}

The numerical range and numerical radius have long been fundamental objects of study in operator theory and $C^*$-algebras, owing to their rich geometric structure and numerous applications across functional analysis, Banach algebras, numerical analysis, and perturbation theory. A particularly significant generalization of the numerical range is the Davis--Wielandt shell and its corresponding radius. By capturing the joint geometric behavior of an operator (or algebra element) alongside its quadratic norm component, the Davis--Wielandt shell provides deeper structural insights, refined spectral bounds, and richer information regarding numerical stability than the classical numerical range alone. Consequently, extending these notions to algebraic, weighted, and semi-Hilbertian settings has become a rapidly growing and vibrant area of research in modern operator theory.

Let $\mathfrak{A}$ be a $C^*$-algebra with unit $\mathbf{1}$ and let $\mathfrak{A}^*$ denote its topological dual space. We denote by $\mathscr{B}(\mathscr{H})$ the $C^*$-algebra of all bounded linear operators on a complex Hilbert space $\mathscr{H}$, equipped with inner product $\langle\cdot,\cdot\rangle$ and induced norm $\|\cdot\|$. Let $\mathfrak{A}^+$ and $\mathscr{Z}(\mathfrak{A})$ be the cone of positive elements of $\mathfrak{A}$ and the center of $\mathfrak{A}$, respectively. A linear functional $f\in\mathfrak{A}^*$ is said to be positive if $f(x)\geq0$ for all $x\in\mathfrak{A}^+$. The set of all states on $\mathfrak{A}$ is denoted by $\mathfrak{S}(\mathfrak{A})$, which is the collection of all positive linear functionals $f$ on $\mathfrak{A}$ such that $f(\mathbf{1})=\|f\|=1$.

Throughout the article, $a$ is symbolized as a non-zero positive element of $\mathfrak{A}$. Following recent developments, we consider the set of $a$-states defined by $\mathfrak{S}_a(\mathfrak{A})=\{f\in\mathfrak{A}^*:f\geq0,\ f(a)=1\}$, which is simply the set $\mathfrak{S}_a(\mathfrak{A})=\left\{\frac{g}{g(a)}:g\in\mathfrak{S}(\mathfrak{A}),\ g(a)\neq0\right\}$. Note that the set $\mathfrak{S}_a(\mathfrak{A})$ is non-empty, convex, $w^*$-closed and $w^*$-compact if and only if $a$ is invertible in $\mathfrak{A}$; see \cite[Proposition~2.3]{bourhim2021a-numerical}. For an element $x\in\mathfrak{A}$, define $\|x\|_a=\sup\left\{\sqrt{f(x^*ax)}:f\in\mathfrak{S}_a(\mathfrak{A})\right\}$. Clearly, $\|\cdot\|_{\mathbf{1}}=\|\cdot\|$ and $\|x\|_a=0$ if and only if $ax=0$. It can also happen that $\|x\|_a=\infty$ for some $x\in\mathfrak{A}$, since $\mathfrak{S}_a(\mathfrak{A})$ is not always compact \cite[Example~3.2]{bourhim2021a-numerical}. In this context, we denote $\mathfrak{A}^a=\{x\in\mathfrak{A}:\|x\|_a<\infty\}$. It is observed that $\mathfrak{A}^a$ is a subalgebra of $\mathfrak{A}$, not necessarily closed, and further $\mathfrak{A}^a=\mathfrak{A}$ if $a\in\mathscr{Z}(\mathfrak{A})$. Moreover, by \cite[Proposition~3.3]{bourhim2021a-numerical}, $\|\cdot\|_a$ is a seminorm on $\mathfrak{A}^a$ satisfying $\|xy\|_a\leq\|x\|_a\|y\|_a$ for all $x,y\in\mathfrak{A}^a$.

For an element $x\in\mathfrak{A}$, an element $x^{\sharp_a}\in\mathfrak{A}$ is said to be an $a$-adjoint of $x$ if $ax^{\sharp_a}=x^*a$. We denote by $\mathfrak{A}_a$ the set of all elements of $\mathfrak{A}$ that admit $a$-adjoints. In addition, $\mathfrak{A}_a$ forms a subalgebra of $\mathfrak{A}$ but is neither closed nor dense in $\mathfrak{A}$. In particular, $\mathfrak{A}_a=\mathfrak{A}$ if $\mathfrak{A}$ is commutative. For $x\in\mathfrak{A}$, the existence and uniqueness of the $a$-adjoint elements are not guaranteed, in general. However, if $x\in\mathfrak{A}_a$ and $x^{\sharp_a}$ is an $a$-adjoint of $x$, then, by \cite[Corollary~4.9]{bourhim2021a-numerical}, $\|x\|_a^2=\|xx^{\sharp_a}\|_a=\|x^{\sharp_a}x\|_a=\|x^{\sharp_a}\|_a^2.$ An element $x\in\mathfrak{A}$ is said to be $a$-self-adjoint if $ax$ is self-adjoint, that is, $ax=x^*a$. Similarly, $x$ is called $a$-positive whenever $ax$ is positive. Note that an $a$-positive element is always $a$-self-adjoint, and the elements $x^{\sharp_a}x$, $xx^{\sharp_a}$ are both $a$-positive. Every element of $\mathfrak{A}_a$ can be written as the sum of two $a$-self-adjoint elements. Unlike the classical Cartesian decomposition in a $C^*$-algebra, such a decomposition does not need to be unique. If $x^{\sharp_a}$ is an $a$-adjoint of $x$, then $x=\Re(x)+i\Im(x)$, where $\Re(x)=\frac{x+x^{\sharp_a}}{2}$ and $\Im(x)=\frac{x-x^{\sharp_a}}{2i}$.

The \emph{$a$-numerical range} and the \emph{$a$-numerical radius} of an element $x\in\mathfrak{A}$ are defined, respectively, by $V_a(x)=\{f(ax):f\in\mathfrak{S}_a(\mathfrak{A})\}$ and $v_a(x)=\sup\{|w|:w\in V_a(x)\}$. Unlike the classical algebraic numerical range, the algebraic $a$-numerical range $V_a(x)$ is not necessarily closed or bounded. Several fundamental properties of $V_a(x)$ and $v_a(x)$ have been established in, for example, \cite{alahmari2022,bourhim2021a-numerical,mabrouk2023}.

Motivated by developments in operator geometry, the \emph{algebraic Davis--Wielandt shell} and the corresponding \emph{Davis--Wielandt radius} for unital $C^*$-algebras were recently investigated in \cite{arambavsic2018roberts,bhattacharya2026geometry}. As a natural extension of the algebraic Davis--Wielandt shell, the algebraic $a$-Davis--Wielandt shell of an element $x\in\mathfrak{A}$ was introduced in \cite{dehghani2025roberts}. The \emph{algebraic $a$-Davis--Wielandt shell} $DV_a(x)$ and the corresponding \emph{algebraic $a$-Davis--Wielandt radius} $dv_a(x)$ are defined, respectively, by
\[
DV_{a}(x)=\left\{(f(ax),f(x^*ax)):f\in\mathfrak{S}_a(\mathfrak{A})\right\},\quad
dv_a(x)=\sup\left\{\sqrt{|f(ax)|^2+f(x^*ax)^2}:f\in\mathfrak{S}_a(\mathfrak{A})\right\}.
\]
The set $DV_a(x)$ is convex \cite{dehghani2025roberts}, although, unlike the classical algebraic Davis--Wielandt shell, it need not be compact. These concepts generalize the Davis--Wielandt shell associated with operators on semi-Hilbertian spaces. It is worth mentioning that for $x\in\mathfrak{A}_a$, the $a$-Davis--Wielandt radius is given by
\[
dv_a(x)=\sup\left\{\sqrt{|f(ax)|^2+f(ax^{\sharp_a}x)^2}:f\in\mathfrak{S}_a(\mathfrak{A})\right\}.
\]

In recent years, considerable attention has also been devoted to the spatial $A$-Davis--Wielandt shell of operators on $\mathscr{B}(\mathscr{H})$. If $\mathscr{B}(\mathscr{H})^+$ denotes the cone of positive (semi-definite) operators, then every $A\in\mathscr{B}(\mathscr{H})^+$ induces a positive semi-definite sesquilinear form $\langle\cdot,\cdot\rangle_A:\mathscr{H}\times\mathscr{H}\to\mathbb{C}$ given by $\langle x,y\rangle_A=\langle Ax,y\rangle$ for all $x,y\in\mathscr{H}$. The corresponding seminorm is $\|x\|_A=\sqrt{\langle x,x\rangle_A}=\|A^{\frac{1}{2}}x\|$. The existence of an $A$-adjoint of an operator $T\in\mathscr{B}(\mathscr{H})$ is equivalent to the solvability of the operator equation $AX=T^*A$. By Douglas's theorem, an operator $T\in\mathscr{B}(\mathscr{H})$ admits an $A$-adjoint if and only if $R(T^*A)\subseteq R(A)$. Consequently, $\mathscr{B}_A(\mathscr{H})=\{T\in\mathscr{B}(\mathscr{H}):R(T^*A)\subseteq R(A)\}$. If $T\in\mathscr{B}_A(\mathscr{H})$, then the unique solution of the equation $AX=T^*A$ is denoted by $T^{\sharp_A}$. Furthermore, if $T^{\sharp_A}\in\mathscr{B}_A(\mathscr{H})$, then $(T^{\sharp_A})^{\sharp_A}=P_ATP_A$ and $((T^{\sharp_A})^{\sharp_A})^{\sharp_A}=T^{\sharp_A}$, where $P_A$ denotes the orthogonal projection onto $\overline{R(A)}$. It is also known that if $\mathfrak{A}=\mathscr{B}(\mathscr{H})$, then $\mathfrak{A}^A=\mathscr{B}_{A^{1/2}}(\mathscr{H})$, where $\mathscr{B}_{A^{1/2}}(\mathscr{H})=\{T\in\mathscr{B}(\mathscr{H}):\exists\ \lambda>0,\|Tx\|_A\leq\lambda\|x\|_A,\ \forall x\in\mathscr{H}\}$. Moreover, for every $T\in\mathscr{B}_{A^{1/2}}(\mathscr{H})$, $\|T\|_A=\sup\{\|Tx\|_A:x\in\mathscr{H},\|x\|_A=1\}<\infty$. Within this framework, Feki et al. \cite{feki2020davis} introduced the \emph{$A$-Davis--Wielandt shell} of an operator $T\in\mathscr{B}(\mathscr{H})$ as $DW_A(T)=\{(\langle Tx,x\rangle_A,\langle Tx,Tx\rangle_A):x\in\mathscr{H},\|x\|_A=1\}.$ Subsequently, numerous upper bounds and inequalities have been established for the corresponding $A$-Davis--Wielandt radius $dw_A(T)$; see, for instance, \cite{bhanja2021,feki2024inequalities,guesba2024a-davis}.

Motivated by these developments and employing the framework of Orlicz functions, in this article we establish several new upper bounds for the $a$-Davis--Wielandt radius as well as the Davis--Wielandt radius of elements in a unital $C^*$-algebra. The rest of this paper is organized as follows: In Section~\ref{sec2}, we recall some relevant preliminary results regarding Orlicz functions. Section~\ref{sec3} presents new upper bounds for the algebraic $a$-Davis--Wielandt radius obtained via Orlicz functions. Section~\ref{sec4} establishes several new upper bounds for the Davis--Wielandt radius by employing polar decomposition and the Moore--Penrose inverse in conjunction with Orlicz functions. Our results not only extend the existing theory, but also unify and recover a variety of known inequalities as special cases.
   \section{Preliminaries}\label{sec2}
   
		 This work develops several upper bounds for the $a$-Davis--Wielandt radius and the Davis--Wielandt radius of elements in a $C^*$-algebra through the use of Orlicz functions. For suitable choices of Orlicz functions, a number of known as well as new inequalities are obtained as special cases.
\begin{definition}
A mapping $\phi:[0,\infty)\to[0,\infty)$ is said to be an \emph{Orlicz function} provided that $\phi$ is continuous, convex, non-decreasing, $\phi(0)=0$, and $\phi(u)\to\infty$ as $u\to\infty$ (see \cite{lindenstraussclassical}).
\end{definition}
A few commonly used examples of Orlicz functions are given by
\begin{eqnarray*}
\phi(u)=u^r,\ r\geq1, \quad
\phi(u)=u^r\log(1+u),\ r\geq1, \quad
\phi(u)=e^{u^r}-1,\ r\geq1.
\end{eqnarray*}
We call an Orlicz function $\phi$ non-degenerate if it takes strictly positive values on $(0,\infty)$. If there exists some $u>0$ for which $\phi(u)=0$, then $\phi$ is referred to as degenerate. In the present work, only non-degenerate Orlicz functions will be considered.

An Orlicz function $\phi$ is said to possess the sub-multiplicative property if $\phi(uv)\leq \phi(u)\phi(v)$ for every $u,v\geq0$. It also follows from the defining properties that $\phi(\lambda u)\leq\lambda\phi(u)$ whenever $\lambda\in[0,1]$ and $u\geq0$. Moreover, $\phi$ has an integral representation of the form $\phi(u)=\int_0^u p(m)\,dm$, in which $p$ is non-decreasing and satisfies $p(0)=0$, $p(m)>0$ for $m>0$, and $\lim_{m\to\infty}p(m)=\infty$. In the special situation where $\phi(u)$ is equivalent to $u$, the aforementioned assumptions on $p$ may be omitted. Define the right inverse of $p$ by $q(n)=\sup\{m:p(m)\leq n\}$, $n\geq0$. The function $\psi(v)=\int_0^v q(n)\,dn$ is then referred to as the complementary Orlicz function of $\phi$. For the particular choice $\phi(u)=\frac{u^p}{p}$, $u\geq0$, $p>1$, the associated complementary function takes the form $\psi(u)=\frac{u^q}{q}$, where $\frac{1}{p}+\frac{1}{q}=1$.

For later use, we record several inequalities associated with Orlicz functions. We first recall the Hermite--Hadamard inequality \cite{hadamard1893etude}. Suppose that $\phi:I\subseteq\mathbb{R}\to\mathbb{R}$ is convex and $u,v\in I$ satisfy $u<v$. Then
\[
\phi\left(\frac{u+v}{2}\right)
\leq
\int_0^1 \phi(tu+(1-t)v)\,dt
\leq
\frac{\phi(u)+\phi(v)}{2}.
\]
\begin{lemma}[Young's inequality {\cite{lindenstraussclassical}}]\label{young}
Suppose that $\phi$ and $\psi$ form a pair of complementary Orlicz functions. The following assertions hold:
\begin{enumerate}
\item[(i)] for all $u,v\geq0$, $uv\leq \phi(u)+\psi(v)$.
\item[(ii)] for every $u\geq0$, $up(u)=\phi(u)+\psi(p(u))$ (equality condition).
\end{enumerate}
\end{lemma}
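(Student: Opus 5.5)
We prove Young's inequality for a complementary pair $\phi,\psi$ through the integral representations $\phi(u)=\int_0^u p(m)\,dm$ and $\psi(v)=\int_0^v q(n)\,dn$, where $q$ is the right inverse of the non-decreasing function $p$. The geometric idea is standard. In the $(m,n)$-plane, $\phi(u)$ is the area of the region $\{(m,n):0\le m\le u,\ 0\le n\le p(m)\}$ under the graph of $p$. Likewise, $\psi(v)$ is the area of the region $\{(m,n):0\le n\le v,\ 0\le m\le q(n)\}$ to the left of the "inverse graph". These two regions together cover the rectangle $[0,u]\times[0,v]$, which gives (i). They tile $[0,u]\times[0,p(u)]$ exactly, up to a null set, which gives (ii).

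To make this rigorous without relying on pictures, we rewrite both integrals as double integrals of indicator functions:
\[
\phi(u)=\iint_{[0,u]\times[0,\infty)}\mathbf{1}_{\{n<p(m)\}}\,dn\,dm,\qquad
\psi(v)=\iint_{[0,\infty)\times[0,v]}\mathbf{1}_{\{m<q(n)\}}\,dm\,dn .
\]
The key pointwise observation comes from the definition $q(n)=\sup\{m:p(m)\le n\}$ together with monotonicity of $p$. For $m,n\ge0$, exactly one of the following holds, except on a set of planar measure zero: either $n<p(m)$, or $m<q(n)$. Indeed, if $p(m)\le n$, then $m\le q(n)$. If $n<p(m)$, then every $m'$ with $p(m')\le n$ satisfies $m'<m$, so $q(n)\le m$. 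The ambiguous cases $m=q(n)$ and $n=p(m)$ lie on graphs of monotone functions, and these have Lebesgue measure zero in the plane.

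Hence on $[0,u]\times[0,v]$ we have $\mathbf{1}_{\{n<p(m)\}}+\mathbf{1}_{\{m<q(n)\}}=1$ almost everywhere. Integrating over the rectangle gives $uv\le\phi(u)+\psi(v)$, since the integrals over the rectangle are dominated by the full integrals defining $\phi(u)$ and $\psi(v)$. This is (i).

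For (ii), we take $v=p(u)$ and check that nothing is lost outside the rectangle.
\begin{itemize}
\item For $m\le u$, the $\phi$-integrand forces $n<p(m)\le p(u)=v$, so the $\phi$-region lies inside the rectangle.
\item For $n\le v=p(u)$, we have $q(n)\le u$ up to the boundary case $n=p(u)$, which is null. So the $\psi$-region also lies in the rectangle.
\end{itemize}
Thus the two regions partition $[0,u]\times[0,p(u)]$ up to measure zero, giving $up(u)=\phi(u)+\psi(p(u))$.

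The main obstacle is the measure-zero bookkeeping when $p$ has flat pieces or jumps, where $q$ jumps or is flat respectively. The fallback is to avoid the planar argument altogether. One fixes $u$ and notes that $v\mapsto \phi(u)+\psi(v)-uv$ has a.e. derivative $q(v)-u$. This derivative is $\le 0$ for $v<p(u)$ and $\ge0$ for $v>p(u)$, by the equivalences established above. So the function is minimized at $v=p(u)$, and it then suffices to verify (ii) directly, via the same equivalences or Fubini.
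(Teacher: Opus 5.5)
Your proof is correct. Writing $\phi(u)$ and $\psi(v)$ as integrals of $\mathbf{1}_{\{n<p(m)\}}$ and $\mathbf{1}_{\{m<q(n)\}}$ works because the two conditions are mutually exclusive everywhere and fail simultaneously only on the null graph $\{m=q(n)\}$. This gives (i) by Tonelli, and (ii) after discarding the null line $n=p(u)$. No right-continuity of $p$ is needed, so the fallback in your last paragraph is unnecessary.

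The paper gives no proof of this lemma and only cites Lindenstrauss--Tzafriri, where the result rests on the same area-under-the-graph comparison. Your argument is essentially that standard route written out rigorously.
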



\begin{lemma}\cite{maji2022orlicz}
For an Orlicz function $\phi$ and non-negative numbers $u_i$, $i=1,2,\dots,n$, the following inequality is valid:
\[
\phi\left(\frac{1}{n}\sum_{i=1}^n u_i\right)
\leq
\frac{1}{n}\sum_{i=1}^n \phi(u_i).
\]
\end{lemma}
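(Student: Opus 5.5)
This is the finite Jensen inequality for the convex function $\phi$ with equal weights $\frac1n$. The plan is to prove it by induction on $n$, using only the convexity of $\phi$ on $[0,\infty)$. The remaining defining properties (monotonicity, $\phi(0)=0$, growth at infinity) are not needed. The non-negativity of the $u_i$ is used only so that every average formed lies in the domain $[0,\infty)$.

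For $n=1$ the inequality is an equality. Suppose it holds for $n-1$ non-negative numbers, and set $m=\frac{1}{n-1}\sum_{i=1}^{n-1}u_i\ge 0$. The key step is the decomposition
\[
\frac{1}{n}\sum_{i=1}^n u_i=\frac{n-1}{n}\,m+\frac{1}{n}\,u_n ,
\]
which expresses the full average as a convex combination of $m$ and $u_n$ with weights $\lambda=\frac{n-1}{n}$ and $1-\lambda=\frac1n$, both in $[0,1]$.

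Convexity of $\phi$ then gives
\[
\phi\Big(\tfrac1n\textstyle\sum_{i=1}^n u_i\Big)\le \tfrac{n-1}{n}\,\phi(m)+\tfrac1n\,\phi(u_n).
\]
The induction hypothesis gives $\phi(m)\le\frac{1}{n-1}\sum_{i=1}^{n-1}\phi(u_i)$. Substituting this bound, the factor $\frac{n-1}{n}\cdot\frac{1}{n-1}$ collapses to $\frac1n$, and the right-hand side becomes $\frac1n\sum_{i=1}^n\phi(u_i)$, as required.

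None of these steps is a real obstacle. The only point needing care is the bookkeeping of the weights, so that the two-point convexity inequality applies with genuine convex coefficients. As a sanity check, the $n=2$ case is exactly the left half of the Hermite--Hadamard inequality recalled above, which confirms that midpoint convexity is the essential ingredient.
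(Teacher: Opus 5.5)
Your induction is correct and complete. The decomposition $\frac1n\sum_{i=1}^n u_i=\frac{n-1}{n}m+\frac1n u_n$ is right, and the two-point convexity inequality applies with legitimate weights. The induction hypothesis then closes the step, and, as you note, only convexity of $\phi$ on $[0,\infty)$ is used. The paper gives no proof of this lemma; it is simply quoted from \cite{maji2022orlicz}. So there is no competing argument to compare against, and yours is the standard proof of the equal-weight finite Jensen inequality.

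Your closing sanity check needs two corrections, though it carries no weight in the proof. First, the case $n=2$, namely $\phi\bigl(\frac{u+v}{2}\bigr)\le\frac{\phi(u)+\phi(v)}{2}$, is not the left half of the Hermite--Hadamard inequality on its own. It is the outer inequality obtained by chaining both halves, and that chain is exactly how the paper repeatedly invokes the $n=2$ case in Section~\ref{sec3}. Second, your induction step uses convexity with the unequal weights $\frac{n-1}{n}$ and $\frac1n$, so it relies on full convexity, not merely midpoint convexity. Midpoint convexity would suffice only together with the continuity of $\phi$, or via a forward--backward (Cauchy) induction.
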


\section{Upper bounds of algebraic $a$-Davis--Wielandt radius}\label{sec3}

Let $f\in\mathfrak{S}_a(\mathfrak{A})$. Then by \cite[Lemma~2.4]{bourhim2021a-numerical} there exists a Hilbert space $\mathscr{H}$, a representation $\pi:\mathfrak{A}\to\mathscr{B}(\mathscr{H})$ and a unique cyclic vector $\xi\in\mathscr{H}$ such that $\langle\pi(a)\xi,\xi\rangle=1$ and $f(x)=\langle\pi(x)\xi,\xi\rangle$ for $x\in\mathfrak{A}$. Now, set $A=\pi(a)$. Throughout the paper, $A$ stands for the operator $\pi(a)$. Since $a$ is a positive element in $\mathfrak{A}$ and $\pi$ is a $*$-homomorphism, $A\geq0$. From the definition of semi-inner product, we have
\[f(ax)=\langle\pi(ax)\xi,\xi\rangle=\langle\pi(a)\pi(x)\xi,\xi\rangle=\langle A\pi(x)\xi.\xi\rangle=\langle\pi(x)\xi,\xi\rangle_A.\] Furthermore, $\|\xi\|_A^2=\langle A\xi,\xi\rangle=\langle\pi(a)\xi,\xi\rangle=1.$
\begin{lemma}\label{pi} 
   Let $x\in\mathfrak{A}_a$ and $x^{\sharp_a}$ be an $a$-adjoint of $x$. Then the following statements hold:
   \begin{enumerate}
   \item [(i)] $x^{\sharp_a}\in\mathfrak{A}_a$.
   \item [(ii)] $x^{\sharp_a}x$, $xx^{\sharp_a}\in\mathfrak{A}_a$.
  \item[(iii)] If $\pi:\mathfrak{A}\to\mathscr{B}(\mathscr{H})$ is a representation and $A=\pi(a)$, then $\pi(x)\in\mathscr{B}_A(\mathscr{H})$. Moreover, $\pi(x)^{\sharp_A}=\pi(x^{\sharp_a})$.
  \item [(iv)] $a(x^{\sharp_a}x)^{\sharp_a}x^{\sharp_a}x=a(x^{\sharp_a}x)^2=ax^{\sharp_a}x(x^{\sharp_a}x)^{\sharp_a}$.
   \end{enumerate}
\end{lemma}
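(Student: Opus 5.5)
The plan is to verify each item directly from the defining identity $ax^{\sharp_a}=x^*a$ and its adjoint. Since $a$ is self-adjoint, taking adjoints in $ax^{\sharp_a}=x^*a$ gives
\[
(x^{\sharp_a})^*a=ax .
\]
This one identity drives every part of Lemma~\ref{pi}.

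For (i), we need some $y\in\mathfrak{A}$ with $ay=(x^{\sharp_a})^*a$. By the displayed identity, $y=x$ works, so $x^{\sharp_a}\in\mathfrak{A}_a$ with $x$ as one of its $a$-adjoints.

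For (ii), I will show that $x^{\sharp_a}x$ and $xx^{\sharp_a}$ are $a$-self-adjoint; then each element is its own $a$-adjoint. For the first,
\[
(x^{\sharp_a}x)^*a=x^*(x^{\sharp_a})^*a=x^*ax=ax^{\sharp_a}x .
\]
For the second,
\[
(xx^{\sharp_a})^*a=(x^{\sharp_a})^*x^*a=(x^{\sharp_a})^*ax^{\sharp_a}=axx^{\sharp_a} .
\]
Alternatively, (ii) follows from (i) together with the fact that $\mathfrak{A}_a$ is a subalgebra. The explicit computation is better, because $a$-self-adjointness is exactly what (iv) needs.

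For (iii), $\pi$ is a unital $*$-homomorphism, so
\[
A\pi(x^{\sharp_a})=\pi(ax^{\sharp_a})=\pi(x^*a)=\pi(x)^*A .
\]
Thus $\pi(x^{\sharp_a})$ solves $AX=\pi(x)^*A$, which gives $R(\pi(x)^*A)\subseteq R(A)$, that is, $\pi(x)\in\mathscr{B}_A(\mathscr{H})$. This also follows from Douglas's theorem.

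The main obstacle is the identification $\pi(x)^{\sharp_A}=\pi(x^{\sharp_a})$. The symbol $T^{\sharp_A}$ in the paper denotes the distinguished (reduced) solution of $AX=T^*A$, whose range lies in $\overline{R(A)}$. In general an arbitrary solution $X$ differs from it by an operator with range in $N(A)$, so that $P_AX=T^{\sharp_A}$. I would therefore prove the equality in the sense relevant for the rest of the paper: $\pi(x^{\sharp_a})$ is an $A$-adjoint of $\pi(x)$. Equivalently, $A\pi(x^{\sharp_a})=A\pi(x)^{\sharp_A}$, so the two agree in every expression $\langle\,\cdot\,\xi,\xi\rangle_A$ and $\|\cdot\|_A$. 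Exact equality of operators holds once $R(\pi(x^{\sharp_a}))\subseteq\overline{R(A)}$, which can be obtained by replacing $x^{\sharp_a}$ with the reduced choice if needed. I would state this caveat explicitly rather than claim more.

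For (iv), set $y=x^{\sharp_a}x$, which is $a$-self-adjoint by (ii), so $ay=y^*a$. Let $y^{\sharp_a}$ be any $a$-adjoint of $y$. Then
\[
ay^{\sharp_a}=y^*a=ay .
\]
Hence, on the left,
\[
ay^{\sharp_a}y=(ay)y=ay^2 .
\]
On the right, using $ay=y^*a$ twice,
\[
ayy^{\sharp_a}=y^*ay^{\sharp_a}=y^*ay=ayy=ay^2 .
\]
Both outer expressions therefore equal $a(x^{\sharp_a}x)^2$, which is (iv).
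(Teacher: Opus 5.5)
Your proof is correct. For (i) and the first half of (iii) it is the paper's argument essentially verbatim. For (ii) the paper fixes an arbitrary $a$-adjoint $(x^{\sharp_a})^{\sharp_a}$ of $x^{\sharp_a}$ and checks that $z=x^{\sharp_a}(x^{\sharp_a})^{\sharp_a}$ satisfies $az=(x^{\sharp_a}x)^*a$. You instead show that $x^{\sharp_a}x$ and $xx^{\sharp_a}$ are $a$-self-adjoint, which is the paper's construction with the particular choice $(x^{\sharp_a})^{\sharp_a}=x$ supplied by (i). Your version gives the stronger fact that you then reuse in (iv): for $a$-self-adjoint $y$, every $a$-adjoint satisfies $ay^{\sharp_a}=ay$. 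Both identities in (iv) then follow in one line each. The paper instead expands $a(x^{\sharp_a}x)^{\sharp_a}=(x^{\sharp_a}x)^*a$ factor by factor and handles the second identity only by ``similarly''.

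On the second half of (iii) you are more careful than the paper. The paper obtains $\pi(x)^{\sharp_A}=\pi(x^{\sharp_a})$ `by the uniqueness of the $A$-adjoint'. But $AX=T^*A$ has a unique solution only under the side condition $R(X)\subseteq\overline{R(A)}$; in general, solutions differ by operators with range in $N(A)$. So what is actually true is your identity $A\pi(x)^{\sharp_A}=A\pi(x^{\sharp_a})$, equivalently $P_A\pi(x^{\sharp_a})=\pi(x)^{\sharp_A}$. This suffices for Lemma~\ref{nl1}, since there $\pi(x)^{\sharp_A}$ only appears to the right of $A$, or of $A\pi(x)=\pi(x^{\sharp_a})^*A$.

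The literal equality can fail. Take $M_2(\mathbb{C})$ with $a=\mathrm{diag}(1,0)$, $\pi$ the identity and $x=0$. The matrix unit $E_{21}$ is an $a$-adjoint of $x$, while $\pi(0)^{\sharp_A}=0$.

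One correction to your own caveat: you cannot in general ``replace $x^{\sharp_a}$ with the reduced choice'', because $P_A$ need not come from $\mathfrak{A}$. Take $\mathfrak{A}=C[0,1]$, $a(t)=\max\{0,t-\tfrac12\}$, $x=\mathbf{1}$ and the multiplication representation on $L^2[0,1]$. Then $\pi(\mathbf{1})^{\sharp_A}=P_A$ is multiplication by $\chi_{(1/2,1]}$, which is $\pi(w)$ for no continuous $w$. So the equality fails for every $a$-adjoint of $\mathbf{1}$. Drop that remark and keep the weaker identity as the content of (iii).
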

\begin{proof}
(i)  We have $ax^{\sharp_a}=x^*a$. This implies $(x^{\sharp_a})^*a=(x^*a)^*=ax$. Thus, the element $x\in\mathfrak{A}$ satisfies the equation $ay=(x^{\sharp_a})^*a$ for some $y\in\mathfrak{A}$, so we obtain $x^{\sharp_a}\in\mathfrak{A}_a$.

(ii) Let $(x^{\sharp_a})^{\sharp_a}$ be an $a$-adjoint of $x^{\sharp_a}$. Let $y=x^{\sharp_a}x$ and $z=x^{\sharp_a}(x^{\sharp_a})^{\sharp_a}$. Consequently, 
\[az=ax^{\sharp_a}(x^{\sharp_a})^{\sharp_a}=x^*a(x^{\sharp_a})^{\sharp_a}=x^*(x^{\sharp_a})^*a=(x^{\sharp_a}x)^*a=y^*a.\] Therefore, $z$ is an $a$-adjoint of $y$, which implies $x^{\sharp_a}x\in\mathfrak{A}_a$. In a similar way, we can show that $xx^{\sharp_a}\in\mathfrak{A}_a$.

   (iii) Let $x\in\mathfrak{A}_a$ and $x^{\sharp_a}$ be an $a$-adjoint of $x$. Then we have
    \[ax^{\sharp_a}=x^*a\implies \pi(a)\pi(x^{\sharp_a})=\pi(x)^*\pi(a)\implies A\pi(x^{\sharp_a})=\pi(x)^*A.\] Therefore, the operator $\pi(x^{\sharp_a})$ is an $A$-adjoint of $\pi(x)$. Thus, $\pi(x)\in\mathscr{B}_A(\mathscr{H})$. Also, by the uniqueness of the $A$-adjoint of the operator on $\mathscr{B}_A(\mathscr{H})$, we get $\pi(x)^{\sharp_A}=\pi(x^{\sharp_a})$.

    (iv) Observe that \[a(x^{\sharp_a}x)^{\sharp_a}x^{\sharp_a}x=(x^{\sharp_a}x)^*ax^{\sharp_a}x=x^*(x^{\sharp_a})^*ax^{\sharp_a}x=x^*axx^{\sharp_a}x=a(x^{\sharp_a}x)^2.\] Similarly, we can verify $ax^{\sharp_a}x(x^{\sharp_a}x)^{\sharp_a}=a(x^{\sharp_a}x)^2$. 
\end{proof}
\begin{lemma}\cite[Corollary~1]{alomari2020generalized}\label{positive}
    Let $T\in\mathscr{B}_A(\mathscr{H})$ be such that $T$ is positive and let $x\in\mathscr{H}$ be an $A$-unit vector. Then
    \[\langle Tx,x\rangle_A^r\leq\langle T^rx,x\rangle_A,\qquad r\geq1.\]
\end{lemma}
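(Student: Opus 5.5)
The statement is a cited lemma (a McCarty/Jensen-type inequality in the semi-Hilbertian setting), and I would prove it by reducing to the classical McCarty inequality $\langle Sy,y\rangle^r\le\langle S^ry,y\rangle$ for positive operators $S$ and unit vectors $y$. The plan rests on one observation: positivity of $T$ makes the $A$-inner products in the statement ordinary inner products against a positive operator in a fixed $x$. I read ``$T$ positive'' as $T\ge 0$ in the usual sense; I also note that for $T\in\mathscr{B}_A(\mathscr{H})$ the statement is meaningful only if $AT$ is compatible with $T$, which is the point to watch.

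\textbf{Step 1: spectral reduction.} Since $T\ge0$, write $T=\int_{[0,\|T\|]}\lambda\,dE(\lambda)$ by the spectral theorem. Then $\langle T^r x,x\rangle_A=\langle AT^rx,x\rangle$. The difficulty is that $A$ need not commute with $T$, so $\langle AT^rx,x\rangle$ is not obviously $\int\lambda^r\,d\mu$ for a probability measure $\mu$.

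\textbf{Step 2: use $A$-self-adjointness.} In the intended setting (as in \cite{alomari2020generalized}) ``positive'' means $A$-positive, i.e.\ $AT\ge0$, and $T^{\sharp_A}=T$ on $\overline{R(A)}$. I would pass to the Hilbert space $\mathbf{R}(A^{1/2})$, or the completion of $\mathscr{H}/N(A)$ with respect to $\langle\cdot,\cdot\rangle_A$. On that space $T$ induces a bounded operator $\tilde T$: it is well defined because $AT=T^*A$ forces $T(N(A))\subseteq N(A)$, and it is bounded because $T\in\mathscr{B}_A(\mathscr{H})\subseteq\mathscr{B}_{A^{1/2}}(\mathscr{H})$. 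Moreover $\tilde T$ is self-adjoint and positive, since $\langle Tx,x\rangle_A=\langle ATx,x\rangle\ge0$. Powers are compatible, $\widetilde{T^r}=\tilde T^{\,r}$. This holds for integer $r$ directly, and for real $r$ via continuous functional calculus, approximating $t^r$ by polynomials on $\sigma(\tilde T)\cup\sigma(T)$.

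\textbf{Step 3: conclude.} With $\tilde x$ the class of $x$ (a unit vector, since $\|x\|_A=1$) and $\mu$ the spectral measure of $\tilde T$ at $\tilde x$, a probability measure, Jensen's inequality for the convex function $t\mapsto t^r$, $r\ge1$, gives
\[
\langle Tx,x\rangle_A^r=\Big(\int t\,d\mu\Big)^r\le\int t^r\,d\mu=\langle T^rx,x\rangle_A .
\]

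The main obstacle is the identification $\widetilde{T^r}=\tilde T^{\,r}$ for non-integer $r$. It needs the polynomial approximations to converge in the $A$-seminorm, which follows from $\|p(T)\|_A\le\sup_{\sigma}|p|$ for the $A$-self-adjoint $T$. Alternatively, one can avoid the quotient space and apply the classical McCarty inequality to the positive operator $A^{1/2}TA^{1/2\dagger}$ on $\overline{R(A)}$, with vector $A^{1/2}x$.
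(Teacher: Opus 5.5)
\textbf{There is a genuine gap, at the step you yourself flag as the main obstacle.} The paper does not prove this lemma (it is quoted from \cite{alomari2020generalized}), so your argument has to stand on its own.

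In Step~2 you keep only $AT\ge0$. Then $T$ need not be normal, and $\sigma(T)$ need not lie in $[0,\infty)$. For example, on $\mathbb{C}^2$ take $A=\mathrm{diag}(1,0)$ and $T=\begin{pmatrix}1&0\\1&-1\end{pmatrix}$. Then $AT=T^*A=A\ge0$, so $T\in\mathscr{B}_A(\mathscr{H})$, while $\sigma(T)=\{1,-1\}$ and $T$ is not normal. This has three consequences.
\begin{itemize}
\item For $r\notin\mathbb{N}$ there is no canonical operator $T^r$. So ``approximating $t^r$ by polynomials on $\sigma(\tilde T)\cup\sigma(T)$'' has no content, and for non-normal $T$ uniform approximation on $\sigma(T)$ would not control $p_n(T)$ anyway.
\item The bound $\|p(T)\|_A\le\sup_{\sigma(\tilde T)}|p|$ only shows that $(p_n(T))$ is Cauchy for the $A$-seminorm. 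That fixes the classes $\widetilde{p_n(T)}$, not an operator on $\mathscr{H}$. You obtain $\tilde T^{\,r}$ on the completed quotient, but no $T^r$ whose $A$-form it computes.
\item Even when $T^r$ exists, $\|p_n(T)-T^r\|_A\to0$ does not follow from that bound, since $p_n(T)-T^r$ is not a polynomial in $T$.
\end{itemize}
Your $A^{1/2}TA^{1/2\dagger}$ variant is just $\tilde T$ transported by $[x]\mapsto A^{1/2}x$, so it meets the same problem. As written, the argument proves the inequality only for integer $r$, where $\widetilde{T^n}=\tilde T^{\,n}$ is automatic.

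\textbf{The missing idea is the one you rule out in Step~1}, when you say that $A$ need not commute with $T$. For the inequality to make sense at every $A$-unit vector, $\langle Tx,x\rangle_A=\langle ATx,x\rangle$ must be real for all $x$. It vanishes on $N(A)$, and every other vector rescales to an $A$-unit one, so $AT$ must be self-adjoint. Without this the statement is meaningless even for $T\ge0$: with $A=\mathrm{diag}(1,2)$, $T=\begin{pmatrix}1&1\\1&1\end{pmatrix}$ and $x=(1,i)/\sqrt3$ one gets $\|x\|_A=1$ but $\langle Tx,x\rangle_A=1-\tfrac{i}{3}$.

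If in addition $T\ge0$, which is what gives $T^r$ a meaning, then $AT=(AT)^*=TA$. Hence $A$, and therefore $A^{1/2}$, commutes with $T^r$. Put $y=A^{1/2}x$. Then $\|y\|=\|x\|_A=1$ and $\langle T^sx,x\rangle_A=\langle A^{1/2}T^sA^{1/2}x,x\rangle=\langle T^sy,y\rangle$ for $s=1,r$. The classical McCarty inequality (Jensen's inequality for the spectral measure of $T$ at $y$) then finishes the proof in one line.

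This commuting situation is also the one in which the paper applies the lemma: through Lemma~\ref{sp}, either with $a=\mathbf{1}$ or with $ax=xa$ and Lemma~\ref{cts}. Your quotient route does go through here, since $\|S\|_A\le\|S\|$ for every $S$ commuting with $A$ gives $\|p_n(T)-T^r\|_A\le\sup_{[0,\|T\|]}|p_n(t)-t^r|\to0$. It is simply unnecessary.
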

With the help of the Lemma~\ref{positive}, one can easily verify the next lemma.
\begin{lemma}\label{sp}
    Let $x\in\mathfrak{A}_a$ be a positive element and $f\in\mathfrak{S}_a(\mathfrak{A})$. Then 
    \[f(ax)^r\leq f(ax^r),\qquad r\geq1.\]
\end{lemma}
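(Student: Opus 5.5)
The plan is to transfer the statement to the spatial setting via the GNS-type representation described at the start of this section, and then apply Lemma~\ref{positive}. Given $f\in\mathfrak{S}_a(\mathfrak{A})$, by \cite[Lemma~2.4]{bourhim2021a-numerical} we obtain a Hilbert space $\mathscr{H}$, a representation $\pi:\mathfrak{A}\to\mathscr{B}(\mathscr{H})$, and a cyclic vector $\xi$ with $f(y)=\langle\pi(y)\xi,\xi\rangle$ for all $y\in\mathfrak{A}$ and $\|\xi\|_A=1$, where $A=\pi(a)$. Set $T=\pi(x)$.

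The first step is to check the hypotheses of Lemma~\ref{positive} for $T$ and $\xi$. Since $x\in\mathfrak{A}_a$, Lemma~\ref{pi}(iii) gives $T\in\mathscr{B}_A(\mathscr{H})$. Since $x\geq0$ and $\pi$ is a $*$-homomorphism, $T$ is a positive operator. Finally, $\xi$ is an $A$-unit vector because $\langle A\xi,\xi\rangle=f(a)=1$.

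The second step is to translate both sides of the desired inequality. As computed before Lemma~\ref{pi}, $f(ax)=\langle A\pi(x)\xi,\xi\rangle=\langle T\xi,\xi\rangle_A$. Likewise, since $\pi$ is multiplicative, $\pi(x^r)=T^r$, and hence $f(ax^r)=\langle AT^r\xi,\xi\rangle=\langle T^r\xi,\xi\rangle_A$.

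The third step applies Lemma~\ref{positive}: $\langle T\xi,\xi\rangle_A^r\leq\langle T^r\xi,\xi\rangle_A$, which is exactly $f(ax)^r\leq f(ax^r)$.

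The only delicate point is giving meaning to $x^r$ and $\pi(x^r)=\pi(x)^r$ when $r\geq1$ is not an integer. Here I would use the continuous functional calculus for the positive element $x$: $x^r$ is defined as a norm limit of polynomials in $x$ without constant term, and $\pi$ is continuous and commutes with polynomials, so $\pi(x^r)=\pi(x)^r$. A related point is the interpretation of $f(ax)^r$: the real number $f(ax)=\langle T\xi,\xi\rangle_A$ is nonnegative because $AT$ is a positive operator. Since the lemma is applied exactly as stated, once these points are in place I expect no further obstacles.
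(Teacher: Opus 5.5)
\textbf{The proof has a genuine gap, and the statement is false as written.} You follow the same route as the paper, which only remarks that the lemma follows easily from Lemma~\ref{positive}. Most of your bookkeeping is correct: $\pi(x)\in\mathscr{B}_A(\mathscr{H})$, $\pi(x)\geq0$, $\|\xi\|_A=1$, and $\pi(x^r)=\pi(x)^r$.

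The step that fails is your assertion that ``$AT$ is a positive operator.'' For $A,T\geq0$ one has $AT\geq0$ if and only if $AT=TA$, and nothing in the hypotheses forces $ax=xa$. No better argument can repair this, because without commutation the statement is false. So is Lemma~\ref{positive} as quoted, which your proof, like the paper's, takes on faith.

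Here is a counterexample. Take $\mathfrak{A}=M_2(\mathbb{C})$ and $a=\mathrm{diag}(1,\tfrac19)$, which is invertible, so $\mathfrak{A}_a=\mathfrak{A}$. Let $x=\tfrac12\begin{pmatrix}1&1\\1&1\end{pmatrix}\geq0$, a projection, and $f(y)=\langle y\xi,\xi\rangle$ with $\xi=\tfrac15(3,12)$.
\begin{itemize}
\item $f(a)=\tfrac{9}{25}+\tfrac{16}{25}=1$, so $f\in\mathfrak{S}_a(\mathfrak{A})$.
\item $ax\xi=(\tfrac32,\tfrac16)$, so $f(ax)=\tfrac{13}{10}$.
\item Since $x^r=x$, we get $f(ax^r)=\tfrac{13}{10}<(\tfrac{13}{10})^r=f(ax)^r$ for every $r>1$.
\end{itemize}
If instead $\xi=\tfrac{3}{\sqrt{10}}(1,i)$, then $f(ax)=\tfrac{5+4i}{10}$, so $f(ax)$ need not even be real. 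With $\pi$ the identity representation, the same data contradict Lemma~\ref{positive}.

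\textbf{The missing hypothesis is $ax=xa$.} This is the setting the paper has in mind when it invokes the lemma: Theorem~\ref{thm.4.15} assumes $ax=xa$ and relies on Lemma~\ref{cts}, and Section~\ref{sec4} has $a=\mathbf{1}$. Under this hypothesis your proof can be completed without Lemma~\ref{positive}.
\begin{itemize}
\item $A=\pi(a)$ commutes with $T$, so $A^{1/2}$ commutes with $T$ and with $T^r$.
\item Hence $AT=A^{1/2}TA^{1/2}\geq0$.
\item With the unit vector $\eta=A^{1/2}\xi$, you get $f(ax)=\langle T\eta,\eta\rangle$ and $f(ax^r)=\langle T^r\eta,\eta\rangle$.
\item The classical McCarthy inequality $\langle T\eta,\eta\rangle^r\leq\langle T^r\eta,\eta\rangle$ then gives the claim.
\end{itemize}
Equivalently, inside the algebra: $g(y)=f(a^{1/2}ya^{1/2})$ is a state with $f(ax)=g(x)$ and $f(ax^r)=g(x^r)$, and Jensen's inequality for states applies.
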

Before proceeding further, we recall the following lemma.
\begin{lemma}\cite[Lemma~2.2]{mahapatra2024upper}\label{c-s_inq}
   Let $f\in\mathfrak{S}_a(\mathfrak{A})$ and $x,y\in\mathfrak{A}$. Then 
   \[|f(x^*ay)|^2\leq f(x^*ax)f(y^*ay).\]
\end{lemma}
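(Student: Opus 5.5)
The statement is a Cauchy--Schwarz inequality for the sesquilinear form $(x,y)\mapsto f(x^*ay)$. The plan is to show this form is positive semidefinite and then run the classical Cauchy--Schwarz argument; alternatively, one could pass to a GNS-type representation as described at the start of this section.

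\emph{Positivity of the form.} Set $\langle x,y\rangle_f := f(y^*ax)$ for $x,y\in\mathfrak{A}$. This is linear in $x$ and conjugate-linear in $y$, because $f$ is linear. Since $a\ge 0$, we have $x^*ax=(a^{1/2}x)^*(a^{1/2}x)\in\mathfrak{A}^+$, and $f\ge 0$, so $\langle x,x\rangle_f=f(x^*ax)\ge 0$. Hermitian symmetry, $f(y^*ax)=\overline{f(x^*ay)}$, follows because every positive functional on a $C^*$-algebra is hermitian, i.e.\ $f(z^*)=\overline{f(z)}$, and $(y^*ax)^*=x^*ay$ since $a$ is self-adjoint.

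\emph{The Cauchy--Schwarz argument.} Fix $\lambda\in\mathbb{C}$ and expand
\[
0\le f\big((x+\lambda y)^*a(x+\lambda y)\big)=f(x^*ax)+2\,\mathrm{Re}\big(\lambda f(x^*ay)\big)+|\lambda|^2 f(y^*ay).
\]
\begin{itemize}
\item If $f(y^*ay)>0$, choose $\lambda=-\overline{f(x^*ay)}/f(y^*ay)$. This gives $|f(x^*ay)|^2\le f(x^*ax)f(y^*ay)$.
\item If $f(y^*ay)=0$, take $\lambda=-t\,\overline{f(x^*ay)}$ with $t>0$. The expansion becomes $0\le f(x^*ax)-2t|f(x^*ay)|^2$ for all $t>0$. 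Letting $t\to\infty$ forces $f(x^*ay)=0$, so the inequality holds trivially.
\end{itemize}

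\emph{Alternative route.} Using \cite[Lemma~2.4]{bourhim2021a-numerical}, write $f(z)=\langle\pi(z)\xi,\xi\rangle$ and set $A=\pi(a)$. Then
\[
f(x^*ay)=\langle A\pi(y)\xi,\pi(x)\xi\rangle=\langle A^{1/2}\pi(y)\xi,\,A^{1/2}\pi(x)\xi\rangle,
\]
and the ordinary Cauchy--Schwarz inequality in $\mathscr{H}$ yields the claim, since $\|A^{1/2}\pi(x)\xi\|^2=f(x^*ax)$.

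\emph{Main obstacle.} The only point needing care is the hermitian property of $f$. It holds because $f$ is a positive linear functional on a unital $C^*$-algebra; in fact $f$ is a positive multiple of a state, namely $f=g/g(a)$ with $g\in\mathfrak{S}(\mathfrak{A})$. The representation route avoids this issue entirely.
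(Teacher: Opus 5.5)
Your proof is correct. The paper gives no argument of its own here and only cites \cite[Lemma~2.2]{mahapatra2024upper}, and what you wrote is the standard proof: it is the usual Cauchy--Schwarz inequality $|f(u^*v)|^2\le f(u^*u)f(v^*v)$ for the positive functional $f$, applied to $u=a^{1/2}x$ and $v=a^{1/2}y$. Your alternative route is exactly the representation $f(\cdot)=\langle\pi(\cdot)\xi,\xi\rangle$, $A=\pi(a)$, that the paper sets up at the start of Section~\ref{sec3}. The hermitian property you flag as the main obstacle is not really one, since it already follows by polarization from $f\big((x+\lambda y)^*a(x+\lambda y)\big)\ge 0$ for $\lambda=1$ and $\lambda=i$.
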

For $x,y\in\mathfrak{A}_a$ as a consequence of Lemma~\ref{c-s_inq}, we have the following results. Putting $x=\mathbf{1}$, we get
\begin{eqnarray}\label{cons1}
    |f(ay)|^2\leq f(a)f(ay^{\sharp_a}y)=f(ay^{\sharp_a}y).
\end{eqnarray} If we put $y=x^{\sharp_a}$ in inequality~\eqref{cons1}, we get
\begin{eqnarray}\label{cons2}
    |f(ax^{\sharp_a})|^2\leq f(a(x^{\sharp_a})^{\sharp_a}x^{\sharp_a}).
\end{eqnarray} Let $(x^{\sharp_a})^{\sharp_a}$ be an $a$-adjoint of $x^{\sharp_a}$. Using $a(x^{\sharp_a})^{\sharp_a}=(x^{\sharp_a})^*a=ax$, the inequality~\eqref{cons2} reduces to 
\begin{eqnarray}\label{cons3}
     |f(ax^{\sharp_a})|^2\leq f(axx^{\sharp_a}).
\end{eqnarray}
In this section, our aim is to obtain estimations for the algebraic $a$-Davis--Wielandt radius using the Orlicz functions. We start this section with the following lemma.
\begin{lemma}\label{inequ5}\cite[Lemma~2.4]{qiao2022a-norm}
Let $T\in \mathscr{B}_A(\mathscr{H})$. Then for any $x,y\in\mathscr{H}$ with $\|x\|_A=\|y\|_A=1$, we have 
\[
  |\langle Tx,y\rangle_A|^2\leq\sqrt{\langle T^{\sharp_A}Tx,x\rangle_A}\sqrt{\langle TT^{\sharp_A}y,y\rangle_A}.
\]
\end{lemma}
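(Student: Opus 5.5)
The inequality in Lemma~\ref{inequ5} is a mixed Schwarz inequality in the semi-Hilbertian space $(\mathscr{H},\langle\cdot,\cdot\rangle_A)$. I would prove it by combining the $A$-Cauchy--Schwarz inequality with the polarization of $T$ through $T^{\sharp_A}$. The computation is short once the right identities are in place.

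First I rewrite $\langle Tx,y\rangle_A$ in two ways. By the defining relation $AT^{\sharp_A}=T^*A$,
\[
\langle Tx,y\rangle_A=\langle ATx,y\rangle=\langle x,T^*Ay\rangle=\langle x,AT^{\sharp_A}y\rangle=\langle x,T^{\sharp_A}y\rangle_A .
\]
Applying the Cauchy--Schwarz inequality for the positive semidefinite form $\langle\cdot,\cdot\rangle_A$ to the first expression gives $|\langle Tx,y\rangle_A|\le\|Tx\|_A\|y\|_A=\|Tx\|_A$. Applying it to the second gives $|\langle Tx,y\rangle_A|\le\|x\|_A\|T^{\sharp_A}y\|_A=\|T^{\sharp_A}y\|_A$. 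Here I use $\|x\|_A=\|y\|_A=1$.

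Multiplying the two bounds yields $|\langle Tx,y\rangle_A|^2\le\|Tx\|_A\,\|T^{\sharp_A}y\|_A$.

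It remains to identify the squared norms:
\[
\|Tx\|_A^2=\langle ATx,Tx\rangle=\langle T^*ATx,x\rangle=\langle AT^{\sharp_A}Tx,x\rangle=\langle T^{\sharp_A}Tx,x\rangle_A .
\]
For the second norm, $\|T^{\sharp_A}y\|_A^2=\langle AT^{\sharp_A}y,T^{\sharp_A}y\rangle=\langle T^*Ay,T^{\sharp_A}y\rangle=\langle Ay,TT^{\sharp_A}y\rangle$. Since $A$ is self-adjoint, this equals the complex conjugate of $\langle ATT^{\sharp_A}y,y\rangle$. It is a nonnegative real number, so it coincides with $\langle TT^{\sharp_A}y,y\rangle_A$. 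Taking square roots gives the claimed inequality.

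The only point needing care is this last identification. There one must use the relation $AT^{\sharp_A}=T^*A$ rather than any formula for $(T^{\sharp_A})^{\sharp_A}$, which involves $P_A$. That relation is exactly what the definition of $T^{\sharp_A}$ provides, so no real obstacle is expected.
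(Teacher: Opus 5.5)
Your proof is correct and complete. The paper gives no argument of its own for this lemma; it imports it from \cite{qiao2022a-norm}. Your proposal therefore supplies a self-contained proof.

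It shows that the inequality is just the geometric mean of two $A$-Cauchy--Schwarz bounds. These are $|\langle Tx,y\rangle_A|\le\|Tx\|_A$ and $|\langle Tx,y\rangle_A|=|\langle x,T^{\sharp_A}y\rangle_A|\le\|T^{\sharp_A}y\|_A$. They are followed by the identities $\|Tx\|_A^2=\langle T^{\sharp_A}Tx,x\rangle_A$ and $\|T^{\sharp_A}y\|_A^2=\langle TT^{\sharp_A}y,y\rangle_A$, which you verify correctly. Your handling of the second identity, via self-adjointness of $A$ and reality of the quantity, is fine. The hypothesis $\|x\|_A=\|y\|_A=1$ is used only to drop the factors $\|x\|_A,\|y\|_A$.

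Your route has one concrete advantage. Every step uses only the relation $AT^{\sharp_A}=T^*A$, so the argument works verbatim for \emph{any} solution $X$ of $AX=T^*A$, not only the distinguished one with range in $\overline{R(A)}$. The paper applies the lemma in Lemma~\ref{nl1} with $\pi(x^{\sharp_a})$ in place of $\pi(x)^{\sharp_A}$. Your proof covers that case directly. It does not need the ``uniqueness of the $A$-adjoint'' invoked in Lemma~\ref{pi}(iii), which fails in general when $A$ has a nontrivial kernel.
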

As a consequence of Lemma~\ref{inequ5}, we have the following result.
\begin{lemma}\label{nl1}
    Let $x\in\mathfrak{A}_a$, $x^{\sharp_a}$ be an $a$-adjoint of $x$ and $f\in\mathfrak{S}_a(\mathfrak{A})$. Then
    \[|f(ax)|^2\leq \sqrt{f(ax^{\sharp_a}x)}\sqrt{f(axx^{\sharp_a})}.\]
\end{lemma}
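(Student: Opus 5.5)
We transfer the statement to the Hilbert-space setting via the GNS-type representation described at the beginning of this section, and then invoke Lemma~\ref{inequ5} with $y=x$ (the vector). Fix $f\in\mathfrak{S}_a(\mathfrak{A})$ and, by \cite[Lemma~2.4]{bourhim2021a-numerical}, choose a Hilbert space $\mathscr{H}$, a representation $\pi:\mathfrak{A}\to\mathscr{B}(\mathscr{H})$ and a cyclic vector $\xi$ with $f(z)=\langle\pi(z)\xi,\xi\rangle$ for all $z\in\mathfrak{A}$ and $\langle\pi(a)\xi,\xi\rangle=1$. With $A=\pi(a)$ this gives $\|\xi\|_A=1$ and
\[
f(az)=\langle\pi(z)\xi,\xi\rangle_A\qquad(z\in\mathfrak{A}).
\]

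Set $T=\pi(x)$. By Lemma~\ref{pi}(iii), $T\in\mathscr{B}_A(\mathscr{H})$ and $T^{\sharp_A}=\pi(x^{\sharp_a})$. Since $\pi$ is multiplicative, $T^{\sharp_A}T=\pi(x^{\sharp_a}x)$ and $TT^{\sharp_A}=\pi(xx^{\sharp_a})$. The displayed identity then gives
\[
f(ax)=\langle T\xi,\xi\rangle_A,\quad f(ax^{\sharp_a}x)=\langle T^{\sharp_A}T\xi,\xi\rangle_A,\quad f(axx^{\sharp_a})=\langle TT^{\sharp_A}\xi,\xi\rangle_A .
\]
Applying Lemma~\ref{inequ5} with both vectors equal to $\xi$ (an $A$-unit vector) yields
\[
|\langle T\xi,\xi\rangle_A|^2\le\sqrt{\langle T^{\sharp_A}T\xi,\xi\rangle_A}\,\sqrt{\langle TT^{\sharp_A}\xi,\xi\rangle_A},
\]
and substituting the three identities gives exactly the claimed inequality.

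The only point needing care is that both quantities under the square roots are nonnegative, so that the statement makes sense. This holds because $a x^{\sharp_a}x=x^*ax\ge0$ and $axx^{\sharp_a}=(x^{\sharp_a})^*ax^{\sharp_a}\ge0$, the latter using $ax=(x^{\sharp_a})^*a$ from the proof of Lemma~\ref{pi}(i); positivity of $f$ then does the rest. Beyond that, the argument is a direct translation, and the real analytic content is carried by Lemma~\ref{inequ5}.
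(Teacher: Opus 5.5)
Your proposal is correct and follows the paper's proof: represent $f$ via $\pi$ with $\|\xi\|_A=1$, apply Lemma~\ref{inequ5} with both vectors equal to $\xi$, and identify $\pi(x)^{\sharp_A}=\pi(x^{\sharp_a})$ through Lemma~\ref{pi}(iii). Your extra check that $ax^{\sharp_a}x=x^*ax$ and $axx^{\sharp_a}=(x^{\sharp_a})^*ax^{\sharp_a}$ are positive, so that the square roots are well defined, is a small detail the paper leaves implicit.
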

\begin{proof}
    For $x\in\mathfrak{A}_a$, $x^{\sharp_a}$ is an $a$-adjoint of $x$, $f\in\mathfrak{S}_a(\mathfrak{A})$ and $\xi\in\mathscr{H}$ with $\|\xi\|_A=1$, we have
    \begin{eqnarray*}
        |f(ax)|^2=|\langle\pi(x)\xi,\xi\rangle_A|^2
        &\leq&\sqrt{\langle\pi(x)^{\sharp_A}\pi(x)\xi,\xi\rangle_A}\sqrt{\langle\pi(x)\pi(x)^{\sharp_A}\xi,\xi\rangle_A}\quad\text{(by Lemma~\ref{inequ5})}\\
&=&\sqrt{\langle\pi(x^{\sharp_a})\pi(x)\xi,\xi\rangle_A}\sqrt{\langle\pi(x)\pi(x^{\sharp_a})\xi,\xi\rangle_A}\quad \text{(by Lemma~\ref{pi}(iii))}\\
        &=&\sqrt{f(ax^{\sharp_a}x)}\sqrt{f(axx^{\sharp_a})}.
    \end{eqnarray*}
\end{proof}
We begin with the first estimate for the algebraic $a$-Davis--Wielandt radius $dv_a(\cdot)$.
\begin{theorem}
	Let $x\in \mathfrak{A}_a$, $x^{\sharp_a}$ be an $a$-adjoint of $x$ and $f\in \mathfrak{S}_a(\mathfrak{A})$. If $\phi$ is an Orlicz function, then
	\begin{eqnarray*}
		\phi(dv_a^2(x))&\leq& \frac{1}{2}\phi\left(v_a(x^{\sharp_a}x+xx^{\sharp_a})-\inf_{f\in\mathfrak{S}_a(\mathfrak{A})}\left(\sqrt{f(ax^{\sharp_a}x)}-\sqrt{f(axx^{\sharp_a})}\right)^2\right)
        +\frac{1}{2}\phi\left(v_a(2(x^{\sharp_a}x)^2)\right).
	\end{eqnarray*}
\end{theorem}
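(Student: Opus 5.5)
Fix $f\in\mathfrak{S}_a(\mathfrak{A})$ and write $\alpha=f(ax^{\sharp_a}x)\ge 0$ and $\beta=f(axx^{\sharp_a})\ge 0$. The plan is to bound $|f(ax)|^2+f(ax^{\sharp_a}x)^2$ by a sum of two terms, apply convexity of $\phi$ to their average, and take the supremum over $f$ at the end.

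\emph{The first term.} By Lemma~\ref{nl1},
\[
|f(ax)|^2\le\sqrt{\alpha\beta}=\tfrac12\bigl(\alpha+\beta-(\sqrt\alpha-\sqrt\beta)^2\bigr).
\]
Since $\alpha+\beta=f\bigl(a(x^{\sharp_a}x+xx^{\sharp_a})\bigr)\le v_a(x^{\sharp_a}x+xx^{\sharp_a})$, and since $(\sqrt\alpha-\sqrt\beta)^2$ is at least its infimum over $\mathfrak{S}_a(\mathfrak{A})$, we get
\[
|f(ax)|^2\le\tfrac12\Bigl(v_a(x^{\sharp_a}x+xx^{\sharp_a})-\inf_{g\in\mathfrak{S}_a(\mathfrak{A})}\bigl(\sqrt{g(ax^{\sharp_a}x)}-\sqrt{g(axx^{\sharp_a})}\bigr)^2\Bigr)=:\tfrac12 M.
\]

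\emph{The second term.} The quantity $f(ax^{\sharp_a}x)^2$ should be controlled by $f(a(x^{\sharp_a}x)^2)$ via Lemma~\ref{sp} with $r=2$. Lemma~\ref{sp} is stated for positive elements, but here $x^{\sharp_a}x$ is only $a$-positive. I would pass to the representation: $T=\pi(x^{\sharp_a}x)$ is $A$-positive and lies in $\mathscr{B}_A(\mathscr{H})$ by Lemma~\ref{pi}(ii),(iii). Then
\[
\langle T\xi,\xi\rangle_A^2\le\langle T\xi,T\xi\rangle_A=\langle T^{\sharp_A}T\xi,\xi\rangle_A
\]
by the $A$-Cauchy--Schwarz inequality, i.e.\ \eqref{cons1} applied to $y=x^{\sharp_a}x$. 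Lemma~\ref{pi}(iv) gives $a(x^{\sharp_a}x)^{\sharp_a}x^{\sharp_a}x=a(x^{\sharp_a}x)^2$, so this becomes
\[
f(ax^{\sharp_a}x)^2\le f\bigl(a(x^{\sharp_a}x)^2\bigr)\le\tfrac12\,v_a\bigl(2(x^{\sharp_a}x)^2\bigr)=:\tfrac12 N.
\]
Routing through \eqref{cons1} and Lemma~\ref{pi}(iv) is the step needing the most care, since it replaces the positivity hypothesis of Lemma~\ref{sp}. The modulus in $v_a$ is harmless because the quantity $f(a(x^{\sharp_a}x)^2)$ is nonnegative.

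\emph{Combining.} Adding the two bounds gives
\[
|f(ax)|^2+f(ax^{\sharp_a}x)^2\le\frac{M+N}{2}.
\]
Because $\phi$ is non-decreasing and convex,
\[
\phi\bigl(|f(ax)|^2+f(ax^{\sharp_a}x)^2\bigr)\le\phi\Bigl(\frac{M+N}{2}\Bigr)\le\tfrac12\phi(M)+\tfrac12\phi(N).
\]
This also uses $M\ge0$, which holds since $M\ge 2|f(ax)|^2$. The right-hand side is independent of $f$, so taking the supremum over $f$ and using continuity and monotonicity of $\phi$ gives $\phi(dv_a^2(x))\le\frac12\phi(M)+\frac12\phi(N)$, which is the claim.
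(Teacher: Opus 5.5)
Your proof is correct and follows the paper's argument. You bound $|f(ax)|^2$ via Lemma~\ref{nl1}, rewritten as $\tfrac12\bigl(\alpha+\beta-(\sqrt\alpha-\sqrt\beta)^2\bigr)$, and bound $f(ax^{\sharp_a}x)^2$ by $f(a(x^{\sharp_a}x)^2)$ via Lemma~\ref{pi}(iv); then you apply monotonicity and midpoint convexity of $\phi$ and take the supremum over $f$. The differences are cosmetic: the paper gets the second bound from Lemma~\ref{nl1} applied to $x^{\sharp_a}x$ rather than from \eqref{cons1}, and it passes to $v_a$ and the infimum only after applying $\phi$ through the Hermite--Hadamard inequality, whereas you do so before.
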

\begin{proof} Let $x\in \mathfrak{A}_a$, $x^{\sharp_a}$ be an $a$-adjoint of $x$ and $f\in \mathfrak{S}_a(\mathfrak{A})$. Then 
	\begin{eqnarray*}
			 &&|f(ax)|^2+f(ax^{\sharp_a}x)^2\\  
			& \leq& \sqrt{f(ax^{\sharp_a}x)}\sqrt{f(axx^{\sharp_a})}+\sqrt{f(a(x^{\sharp_a}x)^{\sharp_a}x^{\sharp_a}x)}\sqrt{f(ax^{\sharp_a}x(x^{\sharp_a}x)^{\sharp_a})}\quad \text{(from Lemma~\ref{nl1})}\\
			&=& \frac{1}{2}\left(f(ax^{\sharp_a}x+axx^{\sharp_a})-\left(\sqrt{f(ax^{\sharp_a}x)}-\sqrt{f(axx^{\sharp_a})}\right)^2\right)+\frac{1}{2}f(2a(x^{\sharp_a}x)^2)\quad \text{(by Lemma~\ref{pi}(iv))}.
	\end{eqnarray*}
  Since $\phi$ is a non-decreasing and convex function, we have
  \begin{eqnarray}\label{4.1}
  		 &&\phi(|f(ax)|^2+f(ax^{\sharp_a}x)^2)\nonumber\\
  		&\leq& \displaystyle\int_{0}^{1}\phi\left(t\left(f(a(x^{\sharp_a}x+xx^{\sharp_a}))-\left(\sqrt{f(ax^{\sharp_a}x)}-\sqrt{f(axx^{\sharp_a})}\right)^2\right)+(1-t)f(2a(x^{\sharp_a}x)^2)\right)\nonumber\\
  		&\leq& \frac{1}{2}\phi\left(f(a(x^{\sharp_a}x+xx^{\sharp_a}))-\left(\sqrt{f(ax^{\sharp_a}x)}-\sqrt{f(axx^{\sharp_a})}\right)^2\right)+
        \frac{1}{2}\phi\left(f(2a(x^{\sharp_a}x)^2)\right).		
  \end{eqnarray}
  Taking the supremum over $f\in \mathfrak{S}_a(\mathfrak{A})$, we get the desired inequality.
\end{proof}
\begin{corollary}\label{cor.4.2}
    Let $x\in \mathfrak{A}_a$, $x^{\sharp_a}$ be an $a$-adjoint of $x$ and $f\in \mathfrak{S}_a(\mathfrak{A})$. Then
    \[dv_a^2(x)\leq\frac{1}{2}v_a\left(x^{\sharp_a}x+xx^{\sharp_a}+2(x^{\sharp_a}x)^2\right)-\frac{1}{2}\inf_{f\in\mathfrak{S}_a(\mathfrak{A})}\left(\sqrt{f(ax^{\sharp_a}x)}-\sqrt{f(axx^{\sharp_a})}\right)^2.\]
\end{corollary}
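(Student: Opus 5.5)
The plan is to redo the first part of the preceding theorem's proof and stop before applying $\phi$, i.e.\ take $\phi(u)=u$. Fix $f\in\mathfrak{S}_a(\mathfrak{A})$. The starting point is the pointwise estimate obtained there from Lemma~\ref{nl1} (applied to $x$ and to $x^{\sharp_a}x$, which lies in $\mathfrak{A}_a$ by Lemma~\ref{pi}(ii)), together with Lemma~\ref{pi}(iv). It reads
\[
|f(ax)|^2+f(ax^{\sharp_a}x)^2\le \frac12 f\bigl(a(x^{\sharp_a}x+xx^{\sharp_a})\bigr)-\frac12\Bigl(\sqrt{f(ax^{\sharp_a}x)}-\sqrt{f(axx^{\sharp_a})}\Bigr)^2+\frac12 f\bigl(2a(x^{\sharp_a}x)^2\bigr).
\]
Two ingredients make this work. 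First, $\sqrt{uv}=\tfrac12(u+v)-\tfrac12(\sqrt u-\sqrt v)^2$. Second, the term $f(ax^{\sharp_a}x)^2$ is controlled via $f(ax^{\sharp_a}x)^2\le f(a(x^{\sharp_a}x)^2)$; this follows from inequality~\eqref{cons1} with $y=x^{\sharp_a}x$, or from Lemma~\ref{sp}. Lemma~\ref{pi}(iv) then identifies $f(a(x^{\sharp_a}x)^{\sharp_a}x^{\sharp_a}x)$ with $f(a(x^{\sharp_a}x)^2)$.

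Next I combine the two positive-part terms by linearity of $f$:
\[
\tfrac12 f\bigl(a(x^{\sharp_a}x+xx^{\sharp_a})\bigr)+\tfrac12 f\bigl(2a(x^{\sharp_a}x)^2\bigr)=\tfrac12 f\bigl(a(x^{\sharp_a}x+xx^{\sharp_a}+2(x^{\sharp_a}x)^2)\bigr).
\]
This value is real, and it is bounded by $\tfrac12\,v_a\bigl(x^{\sharp_a}x+xx^{\sharp_a}+2(x^{\sharp_a}x)^2\bigr)$, since $f(az)\le |f(az)|\le v_a(z)$ for every $f\in\mathfrak{S}_a(\mathfrak{A})$. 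The remaining negative term is bounded above by $-\tfrac12\inf_{g\in\mathfrak{S}_a(\mathfrak{A})}\bigl(\sqrt{g(ax^{\sharp_a}x)}-\sqrt{g(axx^{\sharp_a})}\bigr)^2$. The right-hand side is then independent of $f$, so taking the supremum over $f$ on the left gives $dv_a^2(x)$, using the $\mathfrak{A}_a$-form of $dv_a$ recorded in the introduction.

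The only delicate point is the order of operations. The supremum must be taken \emph{after} the two positive terms are merged into a single functional value, so that one numerical radius $v_a$ of the sum appears. Taking the supremum separately would give $v_a(x^{\sharp_a}x+xx^{\sharp_a})+v_a(2(x^{\sharp_a}x)^2)$, which is weaker. Apart from this, no Orlicz convexity or Hermite--Hadamard step is needed. One might alternatively specialize the theorem to $\phi(u)=u$, but that loses exactly this merging, so the direct route above is the one to follow.
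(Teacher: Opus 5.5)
Your proof is correct and is essentially the paper's own argument. The paper also sets $\phi(t)=t$ in the \emph{pointwise} inequality \eqref{4.1}, before taking any supremum, so the two functional values merge by linearity exactly as in your route; the loss you warn about occurs only when the theorem's final statement is specialized, and the paper does not do that. (Minor caveat: your alternative appeal to Lemma~\ref{sp} does not literally apply, because that lemma is stated for positive elements and $x^{\sharp_a}x$ is only $a$-positive. Your primary justification, \eqref{cons1} with $y=x^{\sharp_a}x$ together with Lemma~\ref{pi}(iv), is sufficient on its own.)
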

\begin{proof}
    Choosing $\phi(t)=t$, $t\geq0$ in inequality~\eqref{4.1} and taking the supremum over $f\in\mathfrak{S}_a(\mathfrak{A})$, the required inequality holds.
\end{proof}
\begin{remark}
    If $\mathfrak{A}$ is taken as $\mathscr{B}(\mathscr{H})$ and $a=\mathbf{1}$, then Corollary~\ref{cor.4.2} becomes \cite[Theorem~2.2]{zamani2020some}.
\end{remark}
To prove our next result, we require the following lemma.
\begin{lemma}\label{thm1}\cite[Theorem~2.16]{mahapatra2024upper}
	Let $x\in \mathfrak{A}_a$ and $\phi$ be an Orlicz function. Then
	\[
		\phi(v_a^2(x))\leq\frac{1}{2}\phi(v_a(x^2))+\frac{1}{2}\phi\left( \frac{\|xx^{\sharp_a}+x^{\sharp_a}x\|_a}{2}\right).
	\]
\end{lemma}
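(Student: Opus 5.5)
This result is quoted from \cite[Theorem~2.16]{mahapatra2024upper}, but it can be proved directly with the tools above. The idea is the classical Kittaneh-type argument (Dragomir's mixed Cauchy--Schwarz refinement) transported to the $a$-setting through the representation $\pi$, followed by convexity of $\phi$.

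\textbf{Step 1: GNS-type representation.} Fix $f\in\mathfrak{S}_a(\mathfrak{A})$ and pass to $\pi$, $\xi$, $A=\pi(a)$ as at the start of this section, with $\|\xi\|_A=1$ and $f(ax)=\langle T\xi,\xi\rangle_A$, where $T=\pi(x)\in\mathscr{B}_A(\mathscr{H})$ and $T^{\sharp_A}=\pi(x^{\sharp_a})$ by Lemma~\ref{pi}(iii).

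\textbf{Step 2: Buzano-type inequality.} We need the $A$-version of Buzano's inequality: for $A$-unit $e$ and any $u,w$,
\[
|\langle u,e\rangle_A\langle e,w\rangle_A|\le \tfrac12\bigl(\|u\|_A\|w\|_A+|\langle u,w\rangle_A|\bigr).
\]
This holds because $\langle\cdot,\cdot\rangle_A$ is a semi-inner product, and Buzano's proof only uses Cauchy--Schwarz applied to $u$ and $2\langle w,e\rangle_A e-w$, whose $A$-seminorm equals $\|w\|_A$. Apply it with $e=\xi$, $u=T\xi$, $w=T^{\sharp_A}\xi$, noting $\langle \xi,T^{\sharp_A}\xi\rangle_A=\langle T\xi,\xi\rangle_A$. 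This gives
\[
|f(ax)|^2\le \tfrac12\bigl(\|T\xi\|_A\|T^{\sharp_A}\xi\|_A+|\langle T^2\xi,\xi\rangle_A|\bigr).
\]

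\textbf{Step 3: Identifying the terms.} The second term is $|f(ax^2)|\le v_a(x^2)$. For the first, use AM--GM to write
\[
\|T\xi\|_A\|T^{\sharp_A}\xi\|_A\le \tfrac12\bigl(\langle T^{\sharp_A}T\xi,\xi\rangle_A+\langle TT^{\sharp_A}\xi,\xi\rangle_A\bigr)=\tfrac12 f\bigl(a(x^{\sharp_a}x+xx^{\sharp_a})\bigr).
\]
Here $\|T^{\sharp_A}\xi\|_A^2=\langle TT^{\sharp_A}\xi,\xi\rangle_A$, which uses $AT^{\sharp_A}=T^*A$ and $(T^{\sharp_A})^*A=AT$; this is the same computation that gives \eqref{cons3}. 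Since $x^{\sharp_a}x+xx^{\sharp_a}$ is $a$-positive, the resulting value is at most $\|xx^{\sharp_a}+x^{\sharp_a}x\|_a$. For this last bound I would use that, for $a$-positive $y$, $f(ay)\le v_a(y)\le\|y\|_a$, by \eqref{cons1} and the fact that $f(ay^{\sharp_a}y)\le\|y\|_a^2$.

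\textbf{Step 4: Convexity and supremum.} Combining Steps 2 and 3,
\[
|f(ax)|^2\le \tfrac12 v_a(x^2)+\tfrac12\cdot\tfrac{\|xx^{\sharp_a}+x^{\sharp_a}x\|_a}{2}.
\]
Since $\phi$ is non-decreasing and convex, it follows that $\phi(|f(ax)|^2)$ is at most $\tfrac12\phi(v_a(x^2))+\tfrac12\phi\bigl(\|xx^{\sharp_a}+x^{\sharp_a}x\|_a/2\bigr)$. Taking the supremum over $f$ and using the continuity and monotonicity of $\phi$ gives $\phi(v_a^2(x))$ on the left, which is the claim.

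\textbf{Main obstacle.} The delicate point is Step 2: making sure Buzano's inequality survives in the degenerate semi-inner product. It does, because only Cauchy--Schwarz and the seminorm identity for $2\langle w,e\rangle_A e-w$ are needed. A secondary point is the bound $f(ay)\le\|y\|_a$ for $a$-positive $y$ in Step 3, which should follow from \eqref{cons1} together with the definition of $\|\cdot\|_a$.
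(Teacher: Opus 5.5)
The paper gives no proof of this lemma; it only imports it from \cite[Theorem~2.16]{mahapatra2024upper}. Your argument is correct and self-contained.

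\begin{itemize}
\item The semi-inner-product version of Buzano's inequality holds for the reason you give: the vector $2\langle w,\xi\rangle_A\xi-w$ has $A$-seminorm $\|w\|_A$ because $\|\xi\|_A=1$.
\item The identifications all check out: $\langle\xi,T^{\sharp_A}\xi\rangle_A=f(ax)$, $\langle T\xi,T^{\sharp_A}\xi\rangle_A=f(ax^2)$, $\|T\xi\|_A^2=f(ax^{\sharp_a}x)$ and $\|T^{\sharp_A}\xi\|_A^2=f(axx^{\sharp_a})$.
\item The final bound $f(ay)\le\|y\|_a$ is just Lemma~\ref{c-s_inq} with first argument $\mathbf{1}$.
\end{itemize}

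Your route (Buzano, then AM--GM, then convexity of $\phi$) is the standard Kittaneh--Dragomir argument. It is exactly the $a$-analogue of how the paper itself combines Lemma~\ref{buzano} with AM--GM in Lemma~\ref{lm2}, followed by the convexity step used throughout Section~\ref{sec3}.

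Two small remarks.

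\begin{enumerate}
\item Your computations only use the intertwining relation $A\pi(x^{\sharp_a})=\pi(x)^*A$ and its adjoint $\pi(x^{\sharp_a})^*A=A\pi(x)$. So you never need the identification $T^{\sharp_A}=\pi(x^{\sharp_a})$ of Lemma~\ref{pi}(iii); it suffices that $\pi(x^{\sharp_a})$ is some solution of $AX=T^*A$. This matters because the identification is not automatic when $A$ has a kernel: $AX=T^*A$ then has many solutions.
\item You can drop the representation altogether. Apply Buzano directly to the semi-inner product $(u,w)\mapsto f(w^*au)$ on $\mathfrak{A}$, with unit vector $e=\mathbf{1}$ (since $f(a)=1$), $u=x$ and $w=x^{\sharp_a}$. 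Using $(x^{\sharp_a})^*a=ax$, this yields the same five quantities $f(ax)$, $f(ax)$, $f(ax^2)$, $f(ax^{\sharp_a}x)$ and $f(axx^{\sharp_a})$ purely algebraically.
\end{enumerate}
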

\begin{theorem}\label{new_res}
	Let $x\in \mathfrak{A}_a$, $x^{\sharp_a}$ be an $a$-adjoint of $x$ and $\phi$ be an Orlicz function. Then
	\begin{eqnarray*}
			\phi(dv_a^2(x))&\leq& \frac{1}{4}\left[\phi(v_a(y^2))+\phi\left(\frac{\|yy^{\sharp_a}+y^{\sharp_a}y\|_a}{2}\right)+\phi(v_a(z^2)) 
		 +\phi\left(\frac{\|zz^{\sharp_a}+z^{\sharp_a}z\|_a}{2}\right)\right], 
	\end{eqnarray*} where $y=x^{\sharp_a}x+x$ and $z=x^{\sharp_a}x-x$.
\end{theorem}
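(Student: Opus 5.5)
We plan to reduce $dv_a^2(x)$ to $a$-numerical radii of $y=x^{\sharp_a}x+x$ and $z=x^{\sharp_a}x-x$, and then apply Lemma~\ref{thm1} to each. Fix $f\in\mathfrak{S}_a(\mathfrak{A})$ and set $\alpha=f(ax)$ and $\beta=f(ax^{\sharp_a}x)$. Since $x^{\sharp_a}x$ is $a$-positive, $\beta\ge0$ is real.

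By the parallelogram identity for complex numbers,
\[
|\beta+\alpha|^2+|\beta-\alpha|^2=2(|\alpha|^2+\beta^2).
\]
Linearity of $f$ gives $\beta\pm\alpha=f(ay)$ and $f(az)$ respectively. Hence
\[
|f(ax)|^2+f(ax^{\sharp_a}x)^2=\tfrac12\left(|f(ay)|^2+|f(az)|^2\right)\le \tfrac12\left(v_a^2(y)+v_a^2(z)\right).
\]
Taking the supremum over $f$ yields $dv_a^2(x)\le\frac12\big(v_a^2(y)+v_a^2(z)\big)$.

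Since $\phi$ is non-decreasing and convex, we get
\[
\phi(dv_a^2(x))\le\phi\left(\tfrac12 v_a^2(y)+\tfrac12 v_a^2(z)\right)\le\tfrac12\phi(v_a^2(y))+\tfrac12\phi(v_a^2(z)).
\]

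Before applying Lemma~\ref{thm1}, we must check that $y,z\in\mathfrak{A}_a$ and identify their $a$-adjoints. By Lemma~\ref{pi}(ii), $x^{\sharp_a}x\in\mathfrak{A}_a$, and $\mathfrak{A}_a$ is a subspace (indeed a subalgebra), so $y,z\in\mathfrak{A}_a$. Their adjoints can be taken as $y^{\sharp_a}=(x^{\sharp_a}x)^{\sharp_a}+x^{\sharp_a}$ and $z^{\sharp_a}=(x^{\sharp_a}x)^{\sharp_a}-x^{\sharp_a}$, since sums of $a$-adjoints are $a$-adjoints.

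A small subtlety is that Lemma~\ref{thm1} is stated for an arbitrary $a$-adjoint. Its right side involves $\|yy^{\sharp_a}+y^{\sharp_a}y\|_a$, which depends only on $a\,yy^{\sharp_a}$ and similar quantities, and these are independent of the choice of adjoint because $ay^{\sharp_a}=y^*a$ is fixed. So the statement is well defined for any choice. Lemma~\ref{thm1} then gives
\[
\phi(v_a^2(y))\le\tfrac12\phi(v_a(y^2))+\tfrac12\phi\left(\tfrac{\|yy^{\sharp_a}+y^{\sharp_a}y\|_a}{2}\right),
\]
and the analogous bound holds for $z$. Substituting both into the previous display produces exactly the factor $\frac14$ and the four terms in the statement.

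The only step requiring care is the membership and adjoint-independence check just described; the rest is the parallelogram identity and convexity.
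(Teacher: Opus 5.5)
Your proof is correct and is essentially the paper's argument: the parallelogram identity $|f(ax)|^2+f(ax^{\sharp_a}x)^2=\frac{1}{2}\left(|f(ay)|^2+|f(az)|^2\right)$, then convexity and monotonicity of $\phi$ to reach $\frac{1}{2}\phi(v_a^2(y))+\frac{1}{2}\phi(v_a^2(z))$, then Lemma~\ref{thm1} applied to $y$ and $z$. The differences are cosmetic: you take the supremum over $f$ before applying $\phi$, whereas the paper works pointwise in $f$ using the Hermite--Hadamard form of convexity, and your adjoint-independence remark is a harmless extra check, since Lemma~\ref{thm1} already holds for whichever $a$-adjoints of $y$ and $z$ are used.
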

\begin{proof} Let $x\in \mathfrak{A}_a$, $x^{\sharp_a}$ be an $a$-adjoint of $x$ and $f\in \mathfrak{S}_a(\mathfrak{A})$. Then
\begin{eqnarray*}
          \phi(|f(ax)|^2+|f(ax^{\sharp_a}x)|^2) 
		& =&\phi\left(\frac{1}{2}(|f(ax^{\sharp_a}x)+f(ax)|^2)+\frac{1}{2}(|f(ax^{\sharp_a}x)-f(ax)|^2)\right) \\
		& \leq&\displaystyle\int_{0}^{1}\phi\left(t(|f(ax^{\sharp_a}x)+f(ax)|^2)+(1-t)(|f(ax^{\sharp_a}x)-f(ax)|^2)\right)dt \\
		&\leq& \frac{1}{2}\phi(|f(ax^{\sharp_a}x+ax)|^2)+\frac{1}{2}\phi(|f(ax^{\sharp_a}x-ax)|^2) \quad\text{(since $\phi$ is convex}) \\
        &\leq& \frac{1}{2}\phi(v_a^2(x^{\sharp_a}x+x))+\frac{1}{2}\phi(v_a^2(x^{\sharp_a}x-x))\\
		& \leq& \frac{1}{4}\bigg[\phi(v_a(y^2))+\phi\left(\frac{\|yy^{\sharp_a}+y^{\sharp_a}y\|_a}{2}\right)+\phi(v_a(z^2)) 
		 +\phi\left(\frac{\|zz^{\sharp_a}+z^{\sharp_a}z\|_a}{2}\right)\bigg], \\ && \text{(using Lemma~\ref{thm1})}
\end{eqnarray*} where $y=x^{\sharp_a}x+x\in\mathfrak{A}_a$ and $z=x^{\sharp_a}x-x\in\mathfrak{A}_a$ as $\mathfrak{A}_a$ forms a subalgebra. Furthermore, taking the supremum over $f\in \mathfrak{S}_a(\mathfrak{A})$, we get the required inequality.
\end{proof}
It is easy to observe the following corollary.
\begin{corollary}
    Let $x\in\mathfrak{A}_a$, $x^{\sharp_a}$ be an $a$-adjoint of $x$. Then for any $r\geq1$,
    \[dv_a^{2r}(x)\leq\frac{1}{4}\left[v_a^r(y^2)+\frac{1}{2^r}\|yy^{\sharp_a}+y^{\sharp_a}y\|^r_a+v_a^r(z^2)+\frac{1}{2^r}\|zz^{\sharp_a}+z^{\sharp_a}z\|^r_a\right],\]  where $y=x^{\sharp_a}x+x$ and $z=x^{\sharp_a}x-x$.
\end{corollary}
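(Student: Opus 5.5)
The plan is to specialize Theorem~\ref{new_res} to the Orlicz function $\phi(u)=u^r$ with $r\geq1$. First we check that this is an admissible choice. The map $u\mapsto u^r$ is continuous, convex, non-decreasing and non-degenerate on $[0,\infty)$. It satisfies $\phi(0)=0$ and $\phi(u)\to\infty$ as $u\to\infty$, so it is an Orlicz function in the sense used throughout.

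Substituting $\phi(u)=u^r$ into the left-hand side of Theorem~\ref{new_res} gives $\phi(dv_a^2(x))=(dv_a^2(x))^r=dv_a^{2r}(x)$. The right-hand side becomes
\[
\frac14\left[v_a^r(y^2)+\left(\frac{\|yy^{\sharp_a}+y^{\sharp_a}y\|_a}{2}\right)^r+v_a^r(z^2)+\left(\frac{\|zz^{\sharp_a}+z^{\sharp_a}z\|_a}{2}\right)^r\right],
\]
with $y=x^{\sharp_a}x+x$ and $z=x^{\sharp_a}x-x$. Pulling the factor $2^{-r}$ out of the two norm terms yields exactly the stated inequality.

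The only point needing care is finiteness, so that the expressions are meaningful. If any quantity on the right is infinite, the inequality holds trivially. Otherwise all terms are finite non-negative reals, and the manipulation $\phi(s)=s^r$ applies to each term directly. There is no genuine obstacle here: the corollary is an immediate instance of Theorem~\ref{new_res}, which in turn relies on Lemma~\ref{thm1} applied to $y,z\in\mathfrak{A}_a$.
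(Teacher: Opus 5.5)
Your proposal is correct and is exactly the paper's argument: the paper also substitutes $\phi(t)=t^r$, $r\geq1$, into Theorem~\ref{new_res} and pulls the factor $2^{-r}$ out of the two norm terms.
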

\begin{proof}
   If we consider $\phi(t)=t^r$, $r\geq1$, $t\geq0$ in Theorem~\ref{new_res}, then we obtain the needed result.
\end{proof}
We now state another extension of the Cauchy–Schwarz inequality. By \cite[p.125]{dragomir2007advances} one can verify the next lemma.
\begin{lemma}\label{inequ6}
	For any $x, y, z \in \mathscr{H}$, \[|\langle x, y \rangle_A|^2+|\langle x, z \rangle_A|^2\leq \|x\|_A^2(\max\{\|y\|_A^2, \|z\|_A^2\}+\sqrt{2}|\langle y, z \rangle_A|).\]
\end{lemma}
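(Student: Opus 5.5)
This is the Buzano-type refinement recorded in \cite[p.125]{dragomir2007advances}, transplanted to the semi-inner product $\langle\cdot,\cdot\rangle_A$. The plan is to first reduce to a genuine inner product space and then run the standard argument.

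\emph{Reduction.} Pass to the quotient $\mathscr{H}/\ker A$ and complete it. On the resulting Hilbert space, $\langle\cdot,\cdot\rangle_A$ induces a true inner product, and every quantity in the statement is compatible with the quotient map: the expressions $\langle x,y\rangle_A$, $\|x\|_A$, $\|y\|_A$, $\|z\|_A$ depend only on the classes of $x,y,z$. It therefore suffices to prove that for vectors $x,y,z$ in an inner product space $(K,\langle\cdot,\cdot\rangle)$,
\[
|\langle x,y\rangle|^2+|\langle x,z\rangle|^2\leq \|x\|^2\bigl(\max\{\|y\|^2,\|z\|^2\}+\sqrt{2}\,|\langle y,z\rangle|\bigr).
\]
This is exactly the inequality on \cite[p.125]{dragomir2007advances}. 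Alternatively, one can avoid the completion by noting that the argument below only uses Cauchy--Schwarz for a positive semidefinite form, which holds without change.

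\emph{Core argument.} Choose unimodular scalars $\alpha,\beta$ such that $\alpha\langle x,y\rangle=|\langle x,y\rangle|$ and $\beta\langle x,z\rangle=|\langle x,z\rangle|$. Put $w=\overline{\alpha}|\langle x,y\rangle|\, y+\overline{\beta}|\langle x,z\rangle|\, z$. Then
\[
\langle x,w\rangle=|\langle x,y\rangle|^2+|\langle x,z\rangle|^2=:S .
\]
Cauchy--Schwarz for the semi-inner product gives $S^2\leq \|x\|_A^2\|w\|_A^2$. Expanding,
\[
\|w\|_A^2=|\langle x,y\rangle|^2\|y\|^2+|\langle x,z\rangle|^2\|z\|^2+2\,\Re\bigl(\overline\alpha\beta|\langle x,y\rangle||\langle x,z\rangle|\langle y,z\rangle\bigr).
\]
The first two terms are at most $S\max\{\|y\|^2,\|z\|^2\}$. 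The cross term is at most $2|\langle x,y\rangle||\langle x,z\rangle||\langle y,z\rangle|\leq S|\langle y,z\rangle|$ by AM--GM. This yields
\[
S^2\leq \|x\|^2 S\bigl(\max\{\|y\|^2,\|z\|^2\}+|\langle y,z\rangle|\bigr).
\]
Dividing by $S$ (the case $S=0$ is trivial) gives the bound with constant $1$ in place of $\sqrt2$, which is even stronger than required. Since $1\leq\sqrt2$, the statement follows.

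\emph{Main obstacle.} The only delicate point is making the Cauchy--Schwarz step rigorous when $A$ is merely positive semidefinite. I would handle this by citing the Cauchy--Schwarz inequality for positive semidefinite sesquilinear forms, $|\langle u,v\rangle_A|^2\leq\|u\|_A^2\|v\|_A^2$. That inequality follows directly from the nonnegativity of $\|u+tv\|_A^2$ for all complex $t$. Everything else is routine expansion, so I expect no further difficulty.
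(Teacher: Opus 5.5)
Your proof is correct. It takes a different route from the paper, which gives no argument of its own: it only refers to Dragomir \cite[p.125]{dragomir2007advances} and leaves the passage from an inner product to the semi-inner product $\langle\cdot,\cdot\rangle_A$ implicit.

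The $\sqrt{2}$ in the stated form presumably comes from the case $n=2$ of Dragomir's Bessel-type bound $\sum_i|\langle x,y_i\rangle|^2\le\|x\|^2\bigl(\max_i\|y_i\|^2+\bigl(\sum_{i\neq j}|\langle y_i,y_j\rangle|^2\bigr)^{1/2}\bigr)$. There the off-diagonal Gram entries are controlled in $\ell^2$, which gives $(2|\langle y,z\rangle|^2)^{1/2}=\sqrt{2}\,|\langle y,z\rangle|$.

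Your argument is the standard proof of Bombieri's inequality for two vectors. Note that your $w$ is simply $\langle x,y\rangle_A\,y+\langle x,z\rangle_A\,z$, so the unimodular scalars are not needed. You apply Cauchy--Schwarz once and absorb the cross term using $2|\langle x,y\rangle_A||\langle x,z\rangle_A|\le S$. Every step checks: the expansion of $\|w\|_A^2$, the bound of the diagonal part by $S\max\{\|y\|_A^2,\|z\|_A^2\}$, and the trivial case $S=0$.

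Your route buys two things.
\begin{itemize}
\item It is self-contained and uses only Cauchy--Schwarz for a positive semidefinite form. As you observe yourself, the quotient-and-completion step is therefore unnecessary.
\item It gives the constant $1$ instead of $\sqrt{2}$. The lemma enters the paper only through this term, so the $2\sqrt{2}$ in the theorem that follows could be lowered to $2$, and the $\sqrt{2}$ in Corollaries~\ref{cor.4.8} and~\ref{thm.4.10} to $1$.
\end{itemize}

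One cosmetic point: in the middle of the computation you drop the subscript $A$ on some norms and inner products. Keep the notation uniform.
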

The next result provides an upper bound for the algebraic $a$-Davis--Wielandt radius.
\begin{theorem}
	Let $x\in \mathfrak{A}_a$, $x^{\sharp_a}$ be an $a$-adjoint of $x$ and $\phi$ be an Orlicz function. Then\[
		\phi(dv_a^2(x))\leq \frac{1}{4}\phi\left(\|2x^{\sharp_a}x+2(x^{\sharp_a}x)^2\|_a\right)+\frac{1}{4}\phi\left(\|2x^{\sharp_a}x-2(x^{\sharp_a}x)^2\|_a\right)+\frac{1}{2}\phi(v_a(2\sqrt{2}x^{\sharp_a}x^2)).
	\]
\end{theorem}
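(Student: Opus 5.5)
Fix $f\in\mathfrak{S}_a(\mathfrak{A})$ and pass to the representation $\pi$, cyclic vector $\xi$ with $\|\xi\|_A=1$, and $A=\pi(a)$, as at the start of this section. Set $X=\pi(x)$, so that $X^{\sharp_A}=\pi(x^{\sharp_a})$ by Lemma~\ref{pi}(iii). Then
\[
f(ax)=\langle X\xi,\xi\rangle_A \quad\text{and}\quad f(ax^{\sharp_a}x)=\langle X\xi,X\xi\rangle_A .
\]
Hence
\[
|f(ax)|^2+f(ax^{\sharp_a}x)^2=|\langle u,\xi\rangle_A|^2+|\langle u,X u\rangle_A|^2 ,
\]
where $u=X\xi$. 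Indeed, $\langle u,Xu\rangle_A=\langle X^{\sharp_A}X\xi,X\xi\rangle_A$, which equals $\overline{f(a\,x^*(x^{\sharp_a})^*\cdots)}$. The cleaner choice is to write both quantities as inner products against one common vector. I will apply Lemma~\ref{inequ6} with the common vector $\xi$ and the two vectors
\[
y_1=X\xi,\qquad y_2=X^{\sharp_A}X\xi .
\]
This gives $\langle \xi, y_1\rangle_A=\overline{f(ax)}$ and $\langle \xi,y_2\rangle_A=\overline{f(ax^{\sharp_a}x)}$. Since $\|\xi\|_A=1$, Lemma~\ref{inequ6} yields
\[
|f(ax)|^2+f(ax^{\sharp_a}x)^2\le \max\{\|X\xi\|_A^2,\|X^{\sharp_A}X\xi\|_A^2\}+\sqrt2\,|\langle X\xi,X^{\sharp_A}X\xi\rangle_A| .
\]

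Next I translate each term back to $\mathfrak{A}$.
\begin{itemize}
\item $\|X\xi\|_A^2=f(ax^{\sharp_a}x)$.
\item $\|X^{\sharp_A}X\xi\|_A^2=f(a(x^{\sharp_a}x)^2)$. This follows from Lemma~\ref{pi}(iv), using that $x^{\sharp_a}x$ is $a$-self-adjoint, i.e.\ $a x^{\sharp_a}x=x^*ax$.
\item $\langle X\xi,X^{\sharp_A}X\xi\rangle_A=\langle (X^{\sharp_A})^{\sharp_A}X\xi,\xi\rangle_A$. Using $a(x^{\sharp_a})^{\sharp_a}=ax$, this equals $f(ax x)$ in the $a$-sense; more carefully, $\langle AX\xi, X^{\sharp_A}X\xi\rangle=\langle (X^{\sharp_A})^*AX\xi,X\xi\rangle=\langle AX^2\xi,X\xi\rangle$.
\end{itemize}
This last expression is $\langle X^{\sharp_A}X^2\xi,\xi\rangle_A=f(ax^{\sharp_a}x^2)$. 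Its modulus is at most $v_a(x^{\sharp_a}x^2)$, which explains the term $v_a(2\sqrt2\,x^{\sharp_a}x^2)$ in the statement.

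For the maximum I use $\max\{\alpha,\beta\}=\frac{\alpha+\beta}{2}+\frac{|\alpha-\beta|}{2}$ with $\alpha=f(ax^{\sharp_a}x)$ and $\beta=f(a(x^{\sharp_a}x)^2)$. Then $\alpha+\beta=f(a(x^{\sharp_a}x+(x^{\sharp_a}x)^2))\le\|x^{\sharp_a}x+(x^{\sharp_a}x)^2\|_a$. Here I use that $x^{\sharp_a}x+(x^{\sharp_a}x)^2$ is $a$-positive, so its $a$-numerical value is dominated by its $a$-norm, since $v_a(w)\le\|w\|_a$. Similarly, $|\alpha-\beta|\le v_a(x^{\sharp_a}x-(x^{\sharp_a}x)^2)\le\|x^{\sharp_a}x-(x^{\sharp_a}x)^2\|_a$. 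Altogether,
\[
|f(ax)|^2+f(ax^{\sharp_a}x)^2\le \frac14\|2x^{\sharp_a}x+2(x^{\sharp_a}x)^2\|_a+\frac14\|2x^{\sharp_a}x-2(x^{\sharp_a}x)^2\|_a+\frac12 v_a(2\sqrt2\,x^{\sharp_a}x^2),
\]
because $\frac{\alpha+\beta}{2}=\frac14\cdot 2(\alpha+\beta)$ and $\sqrt2\,|\cdot|=\frac12\cdot 2\sqrt2\,|\cdot|$.

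Finally, the right-hand side is a convex combination with weights $\frac14,\frac14,\frac12$. Since $\phi$ is non-decreasing and convex, applying $\phi$ to both sides and using Jensen's inequality for three points gives the stated bound for each $f$. Taking the supremum over $f\in\mathfrak{S}_a(\mathfrak{A})$ and using continuity and monotonicity of $\phi$ yields $\phi(dv_a^2(x))$ on the left.

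The main obstacle is the bookkeeping in the identifications: verifying $\|X^{\sharp_A}X\xi\|_A^2=f(a(x^{\sharp_a}x)^2)$ and $\langle X\xi,X^{\sharp_A}X\xi\rangle_A=f(ax^{\sharp_a}x^2)$ via the $a$-adjoint relations $ax^{\sharp_a}=x^*a$ and Lemma~\ref{pi}. Justifying $\alpha+\beta\le\|\cdot\|_a$ via $v_a\le\|\cdot\|_a$ also needs care. If $a$-positivity fails to give this directly, I would instead use $|f(aw)|\le\|w\|_a$, which follows from Lemma~\ref{c-s_inq} with $x=\mathbf{1}$.
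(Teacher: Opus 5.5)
Your proposal is correct and is essentially the paper's proof: the same application of Lemma~\ref{inequ6} to $\xi$, $\pi(x)\xi$, $\pi(x^{\sharp_a}x)\xi$, the same identification of the terms as $f(ax^{\sharp_a}x)$, $f(a(x^{\sharp_a}x)^2)$ and $f(ax^{\sharp_a}x^2)$, and the same splitting $\max\{\alpha,\beta\}=\frac{\alpha+\beta}{2}+\frac{|\alpha-\beta|}{2}$. The only difference is that you bound by $\|\cdot\|_a$ and $v_a$ before applying $\phi$ and use a single three-point Jensen step, whereas the paper applies convexity twice for each fixed $f$ and takes the supremum at the end; you should also delete your two false starts (the identity with $u=X\xi$, and the line $\langle X\xi,X^{\sharp_A}X\xi\rangle_A=\langle (X^{\sharp_A})^{\sharp_A}X\xi,\xi\rangle_A$), which are wrong as written, although your final computations do not rely on them.
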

\begin{proof} Let $x\in \mathfrak{A}_a$, $x^{\sharp_a}$ be an $a$-adjoint of $x$, $f\in \mathfrak{S}_a(\mathfrak{A})$ and $\xi\in\mathscr{H}$ with $\|\xi\|_A=1$. Then we get
	\begin{eqnarray*}
			 &&|f(ax)|^2+|f(ax^{\sharp_a}x)|^2 \\
			& = & |\langle\xi, \pi(x)\xi\rangle_A|^2+|\langle\xi, \pi(x^{\sharp_a}x)\xi\rangle_A|^2\\
			& \leq &\|\xi\|_A^2\left(\max\{\|\pi(x)\xi\|_A^2, \| \pi(x^{\sharp_a}x)\xi\|_A^2\}+\sqrt{2}|\langle\pi(x)\xi, \pi(x^{\sharp_a}x)\xi\rangle_A|\right) \quad\text{(by Lemma~\ref{inequ6})}\\
			& = &\max \{\langle\pi(x^{\sharp_a}x)\xi, \xi\rangle_A, \langle\pi((x^{\sharp_a}x)^{\sharp_a}(x^{\sharp_a}x))\xi, \xi\rangle_A\}+\sqrt{2}|\langle\pi((x^{\sharp_a})^2x)\xi, \xi\rangle_A|\quad \text{(by Lemma~\ref{pi}(iii))}\\
			& \leq& \max \{f(ax^{\sharp_a}x), f(a(x^{\sharp_a}x)^{\sharp_a}x^{\sharp_a}x)\}+\sqrt{2}|f(a((x^{\sharp_a})^2 x)^{\sharp_a})| \quad \text{(as $|f(ax)|=|f(ax^{\sharp_a})|$)}\\
			& =& \frac{1}{2}\left[f(ax^{\sharp_a}x+a(x^{\sharp_a}x)^2)+|f(ax^{\sharp_a}x-a(x^{\sharp_a}x)^2)|\right]+\frac{1}{2}|f(2\sqrt{2}ax^{\sharp_a}x^2)|\quad\text{(from Lemma~\ref{pi}(iv))}.
	\end{eqnarray*}
	As $\phi$ is a non-decreasing and convex function, we have
	\begin{eqnarray}
			&& \phi(|f(ax)|^2+|f(ax^{\sharp_a}x)|^2)\nonumber \\
			& \leq& \phi\left(\frac{1}{2}\left[f(ax^{\sharp_a}x+a(x^{\sharp_a}x)^2)+|f(ax^{\sharp_a}x-a(x^{\sharp_a}x)^2)|\right]+\frac{1}{2}|f(2\sqrt{2}ax^{\sharp_a}x^2)|\right) \nonumber\\
			& \leq& \displaystyle\int_{0}^{1}\phi\left(t(f(ax^{\sharp_a}x+a(x^{\sharp_a}x)^2)+|f(ax^{\sharp_a}x-a(x^{\sharp_a}x)^2)|)+(1-t)(|f(2\sqrt{2}ax^{\sharp_a}x^2)|)\right)dt\nonumber \\
			& \leq& \frac{1}{2}\phi(f(ax^{\sharp_a}x+a(x^{\sharp_a}x)^2)+|f(ax^{\sharp_a}x-a(x^{\sharp_a}x)^2)|)+\frac{1}{2}\phi(|f(2\sqrt{2}ax^{\sharp_a}x^2)|)\label{100} \\
			& \leq& \frac{1}{4}\phi(f(2ax^{\sharp_a}x+2a(x^{\sharp_a}x)^2))+\frac{1}{4}\phi(|f(2ax^{\sharp_a}x-2a(x^{\sharp_a}x)^2)|)
            +\frac{1}{2}\phi(|f(2\sqrt{2}ax^{\sharp_a}x^2)|)\label{101}.
	\end{eqnarray}
	If we take the supremum over $f\in \mathfrak{S}_a(\mathfrak{A})$, then the desired result holds. 
\end{proof}
\begin{corollary}\label{cor.4.8}
    Let $x\in \mathfrak{A}_a$, $x^{\sharp_a}$ be an $a$-adjoint of $x$. Then
	\[
			 dv_a^2(x)\leq\max\{\|x\|_a^2,\|x\|_a^4\}+\sqrt{2}v_a(x^{\sharp_a}x^2).\]
\end{corollary}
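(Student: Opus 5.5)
The plan is to specialize the preceding theorem to the identity Orlicz function $\phi(t)=t$, but at the earlier stage of its proof, namely inequality~\eqref{100} (or equivalently the chain just before it). With $\phi(t)=t$, that chain gives, for every $f\in\mathfrak{S}_a(\mathfrak{A})$,
\[
|f(ax)|^2+|f(ax^{\sharp_a}x)|^2\leq \max\{f(ax^{\sharp_a}x),f(a(x^{\sharp_a}x)^2)\}+\sqrt{2}\,|f(ax^{\sharp_a}x^2)|.
\]
Here we use Lemma~\ref{pi}(iv) to identify $f(a(x^{\sharp_a}x)^{\sharp_a}x^{\sharp_a}x)$ with $f(a(x^{\sharp_a}x)^2)$. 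We also use the fact that $|f(a((x^{\sharp_a})^2x)^{\sharp_a})|=|f(ax^{\sharp_a}x^2)|$, since $x^{\sharp_a}x^2$ is an $a$-adjoint of $(x^{\sharp_a})^2x$ and $f(ay^{\sharp_a})=\overline{f(ay)}$ for $f\geq0$.

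Next, bound each term by the corresponding supremum.

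\emph{First entry of the maximum.} $f(ax^{\sharp_a}x)=f(x^*ax)\leq\|x\|_a^2$ by the definition of $\|\cdot\|_a$.

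\emph{Second entry of the maximum.} $f(a(x^{\sharp_a}x)^2)=f((x^{\sharp_a}x)^*a(x^{\sharp_a}x))\leq\|x^{\sharp_a}x\|_a^2$. Then we use the identity $\|x^{\sharp_a}x\|_a=\|x\|_a^2$ from \cite[Corollary~4.9]{bourhim2021a-numerical}, recalled in the introduction, to get $f(a(x^{\sharp_a}x)^2)\leq\|x\|_a^4$.

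\emph{Last term.} $\sqrt{2}|f(ax^{\sharp_a}x^2)|\leq\sqrt{2}\,v_a(x^{\sharp_a}x^2)$ by the definition of $v_a$.

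Taking the supremum over $f\in\mathfrak{S}_a(\mathfrak{A})$ of the left-hand side then yields $dv_a^2(x)$. Here we use the $\mathfrak{A}_a$-form of $dv_a$, with $f(x^*ax)=f(ax^{\sharp_a}x)$.

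The main subtlety is the first step. One must not pass through \eqref{101}, which splits the maximum via $\max\{\alpha,\beta\}=\frac12(\alpha+\beta+|\alpha-\beta|)$ and then uses convexity; that would give a weaker bound in terms of norms of sums. Instead we keep the maximum intact, which is legitimate because with $\phi(t)=t$ the Hermite--Hadamard steps are equalities. The remaining care is only in justifying $\|x^{\sharp_a}x\|_a=\|x\|_a^2$ and in noting that $\max\{\alpha,\beta\}\leq\max\{A,B\}$ when $\alpha\leq A$ and $\beta\leq B$.
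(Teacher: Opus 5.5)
Your proposal is correct and is essentially the paper's proof. You specialize to $\phi(t)=t$ for a fixed $f\in\mathfrak{S}_a(\mathfrak{A})$, rewrite the bound as $\max\{f(ax^{\sharp_a}x),f(a(x^{\sharp_a}x)^2)\}+\sqrt{2}|f(ax^{\sharp_a}x^2)|$, bound the terms by $\|x\|_a^2$, $\|x^{\sharp_a}x\|_a^2=\|x\|_a^4$ and $\sqrt{2}v_a(x^{\sharp_a}x^2)$, and take the supremum.

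One correction to your remark: the paper does start from \eqref{101}. For a fixed $f$ and $\phi(t)=t$, that inequality coincides with \eqref{100} and recombines to the maximum via $\frac12(\alpha+\beta+|\alpha-\beta|)=\max\{\alpha,\beta\}$. So the loss you warn about only occurs if the supremum is taken before recombining.
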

\begin{proof}
  Choosing $\phi(t)=t$, $t\geq0$ in inequality~\eqref{101} we get
  \begin{eqnarray*}
      |f(ax)|^2+f(ax^{\sharp_a}x)^2&\leq& \frac{1}{2}f(ax^{\sharp_a}x+a(x^{\sharp_a}x)^2)+\frac{1}{2}|f(ax^{\sharp_a}x-a(x^{\sharp_a}x)^2|+\sqrt{2}|f(ax^{\sharp_a}x^2)|\\
      &=&\max\{f(ax^{\sharp_a}x),f(a(x^{\sharp_a}x)^2)\}+\sqrt{2}|f(ax^{\sharp_a}x^2)|\\
      &\leq&\max\{\|x^{\sharp_a}x\|_a,\|x^{\sharp_a}x\|_a^2\}+\sqrt{2}v_a(x^{\sharp_a}x^2)\\
      &=& \max\{\|x\|_a^2,\|x\|_a^4\}+\sqrt{2}v_a(x^{\sharp_a}x^2).
  \end{eqnarray*} Taking the supremum over $f\in \mathfrak{S}_a(\mathfrak{A})$, we obtain the needed inequality.
\end{proof}
\begin{remark}
    If we consider $\mathfrak{A}=\mathscr{B}(\mathscr{H})$ and $a=\mathbf{1}$ in Corollary~\ref{cor.4.8}, then we get \cite[Theorem~2.13]{zamani2020some}.
\end{remark}
In addition, an upper bound of $dv_a(\cdot)$ is as follows.
\begin{corollary}\label{thm.4.10}
	Let $x\in \mathfrak{A}_a$, $x^{\sharp_a}$ be an $a$-adjoint of $x$. Then
	\[
		dv_a^2(x)\leq \frac{1}{2}(v_a(x^{\sharp_a}x+(x^{\sharp_a}x)^2)+v_a(x^{\sharp_a}x-(x^{\sharp_a}x)^2)+\sqrt{2}v_a(x^{\sharp_a}x^2). 
	\]
\end{corollary}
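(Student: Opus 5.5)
The plan is to start from inequality~\eqref{100} in the proof of the preceding theorem and specialize it to $\phi(t)=t$. This is the step just before the final convexity split that produced \eqref{101}. For every $f\in\mathfrak{S}_a(\mathfrak{A})$, the identity choice of $\phi$ turns \eqref{100} into
\[
|f(ax)|^2+f(ax^{\sharp_a}x)^2\leq \frac{1}{2}\Big(f\big(a(x^{\sharp_a}x+(x^{\sharp_a}x)^2)\big)+\big|f\big(a(x^{\sharp_a}x-(x^{\sharp_a}x)^2)\big)\big|\Big)+\sqrt{2}\,|f(ax^{\sharp_a}x^2)|.
\]
Equivalently, one can go back directly to the chain of estimates obtained from Lemma~\ref{inequ6}, Lemma~\ref{pi}(iii) and Lemma~\ref{pi}(iv), which end in exactly this right-hand side. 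No Hermite--Hadamard step is needed, since $\phi$ is linear.

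Next I bound each term on the right by the corresponding $a$-numerical radius. The first term satisfies $f(a(x^{\sharp_a}x+(x^{\sharp_a}x)^2))\leq|f(a(x^{\sharp_a}x+(x^{\sharp_a}x)^2))|$. This value lies in $V_a(x^{\sharp_a}x+(x^{\sharp_a}x)^2)$, so it is at most $v_a(x^{\sharp_a}x+(x^{\sharp_a}x)^2)$. It is in fact nonnegative, because $x^{\sharp_a}x$ and $(x^{\sharp_a}x)^2$ are $a$-positive; the latter holds since $a(x^{\sharp_a}x)^2=x^*axx^{\sharp_a}x$, and by Lemma~\ref{pi}(iv) this equals $(x^{\sharp_a}x)^*a(x^{\sharp_a}x)\geq0$. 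Only the upper bound matters here, however. In the same way, $|f(a(x^{\sharp_a}x-(x^{\sharp_a}x)^2))|\leq v_a(x^{\sharp_a}x-(x^{\sharp_a}x)^2)$ and $|f(ax^{\sharp_a}x^2)|\leq v_a(x^{\sharp_a}x^2)$, directly from the definition of $v_a$ as a supremum over $\mathfrak{S}_a(\mathfrak{A})$.

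Finally, I take the supremum over $f\in\mathfrak{S}_a(\mathfrak{A})$ on the left. The right-hand side is now independent of $f$, and the left-hand supremum is $dv_a^2(x)$ by the formula for $dv_a$ on $\mathfrak{A}_a$. This gives the stated bound.

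The only point needing care is the validity of the chain behind \eqref{100}. In particular, the step replacing $|\langle\pi(x^{\sharp_a}x)\xi,\pi(x)\xi\rangle_A|$ by $|f(ax^{\sharp_a}x^2)|$ uses $|f(ay)|=|f(ay^{\sharp_a})|$. Since that chain is already established in the preceding proof, I would simply cite it, so there is no real obstacle beyond bookkeeping.
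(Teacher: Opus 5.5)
Your proposal is correct and follows the paper's proof: specialize inequality~\eqref{100} to $\phi(t)=t$, bound each $f$-term by the corresponding $a$-numerical radius, and take the supremum over $f\in\mathfrak{S}_a(\mathfrak{A})$. Your side remarks are accurate but not needed for the argument: the $a$-positivity of $(x^{\sharp_a}x)^2$, and the identity $|f(ay)|=|f(ay^{\sharp_a})|$, which holds because positive functionals are Hermitian.
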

\begin{proof} Let $x\in \mathfrak{A}_a$, $x^{\sharp_a}$ be an $a$-adjoint of $x$ and $f\in \mathfrak{S}_a(\mathfrak{A})$. Then from inequality~\eqref{100} choosing $\phi(t)=t$, $t\geq0$, we have
			\[ |f(ax)|^2+f(ax^{\sharp_a}x)^2\leq\frac{1}{2}(f(ax^{\sharp_a}x+a(x^{\sharp_a}x)^2)+|f(ax^{\sharp_a}x-a(x^{\sharp_a}x)^2|)+\sqrt{2}|f(ax^{\sharp_a}x^2)|.\]
	Taking the supremum over $f\in \mathfrak{S}_a(\mathfrak{A})$, we get the required result.
\end{proof}
\begin{remark}
    If we fix $\phi(t)=t$, $t\geq0$, $a=\mathbf{1}$ and $\mathfrak{A}=\mathscr{B}(\mathscr{H})$ in Corollary~\ref{thm.4.10}, then we acquire \cite[Theorem~2.14]{zamani2020some}.
\end{remark}
For $x\in\mathfrak{A}$, the $a$-Crawford number of an element $x$, denoted by $\mathfrak{C}_a(x)$, is defined as
\[\mathfrak{C}_a(x)=\inf\{|f(ax)|:f\in\mathfrak{S}_a(\mathfrak{A})\}.\]
The next theorem also provides an upper bound of the algebraic $a$-Davis-Wielandt radius.
\begin{theorem}\label{nthm1}
		Let $x\in \mathfrak{A}_a$, $x^{\sharp_a}$ be an $a$-adjoint of $x$ and $\phi$ be an Orlicz function. Then
			\[
			\phi(dv_a^2(x))\leq \frac{1}{2}\phi\left(4\|x\|_a^2-2\sqrt{\mathfrak{C}_a(x^{\sharp_a}x)\mathfrak{C}_a(xx^{\sharp_a})}\right)+\frac{1}{2}\phi\left(2\|x\|_a^4\right).
		\]
\end{theorem}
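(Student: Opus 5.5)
We follow the scheme of the first theorem of this section and bound the two pieces of $|f(ax)|^2+f(ax^{\sharp_a}x)^2$ separately, for a fixed $f\in\mathfrak{S}_a(\mathfrak{A})$.

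\emph{The term $|f(ax)|^2$.} By Lemma~\ref{nl1} and the elementary identity $2\sqrt{st}=s+t-(\sqrt{s}-\sqrt{t})^2$,
\[
|f(ax)|^2\leq \sqrt{f(ax^{\sharp_a}x)}\sqrt{f(axx^{\sharp_a})}=\frac12\Big(f(ax^{\sharp_a}x)+f(axx^{\sharp_a})\Big)-\frac12\Big(\sqrt{f(ax^{\sharp_a}x)}-\sqrt{f(axx^{\sharp_a})}\Big)^2 .
\]
Both $f(ax^{\sharp_a}x)$ and $f(axx^{\sharp_a})$ are nonnegative, because $x^{\sharp_a}x$ and $xx^{\sharp_a}$ are $a$-positive. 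Each is at most $\|x\|_a^2$: indeed $f(ax^{\sharp_a}x)=f(x^*ax)\le\|x\|_a^2$ directly, and $f(axx^{\sharp_a})=f((x^{\sharp_a})^*ax^{\sharp_a})\le\|x^{\sharp_a}\|_a^2=\|x\|_a^2$. Hence the first bracket is at most $\|x\|_a^2$.

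\emph{The Crawford-number correction.} Rather than discard the subtracted term, we use the cruder form
\[
|f(ax)|^2\le \tfrac12\cdot 2\sqrt{f(ax^{\sharp_a}x)f(axx^{\sharp_a})},
\]
since the target already has the shape $\tfrac12\big(4\|x\|_a^2-2\sqrt{\mathfrak{C}_a\mathfrak{C}_a}\big)$. The plan is to show
\[
2\sqrt{st}\le 4\|x\|_a^2-2\sqrt{\mathfrak{C}_a(x^{\sharp_a}x)\,\mathfrak{C}_a(xx^{\sharp_a})},\qquad s=f(ax^{\sharp_a}x),\ t=f(axx^{\sharp_a}).
\]
Since $s,t\le\|x\|_a^2$, we have $2\sqrt{st}\le 2\|x\|_a^2$. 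Also $\mathfrak{C}_a(x^{\sharp_a}x)\le s\le\|x\|_a^2$ and likewise for $xx^{\sharp_a}$, so $2\sqrt{\mathfrak{C}_a(x^{\sharp_a}x)\mathfrak{C}_a(xx^{\sharp_a})}\le 2\|x\|_a^2$. Adding the two gives exactly the display above, and the right-hand side is nonnegative, so $\phi$ can be applied to it. Thus $|f(ax)|^2\le\frac12 P$ with $P=4\|x\|_a^2-2\sqrt{\mathfrak{C}_a(x^{\sharp_a}x)\mathfrak{C}_a(xx^{\sharp_a})}$.

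\emph{The term $f(ax^{\sharp_a}x)^2$.} Lemma~\ref{sp} with $r=2$ gives $f(ax^{\sharp_a}x)^2\le f(a(x^{\sharp_a}x)^2)$. The route through Lemma~\ref{nl1} and Lemma~\ref{pi}(iv) gives the same bound. Then $f(a(x^{\sharp_a}x)^2)\le\|(x^{\sharp_a}x)\|_a^2=\|x\|_a^4$, using that $x^{\sharp_a}x$ is $a$-self-adjoint, so $f(a y^2)=f(y^*ay)$ with $y=x^{\sharp_a}x$, together with $\|x^{\sharp_a}x\|_a=\|x\|_a^2$ (Corollary~4.9 of \cite{bourhim2021a-numerical}). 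Hence $f(ax^{\sharp_a}x)^2\le\frac12 Q$ with $Q=2\|x\|_a^4$.

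\emph{Conclusion.} We now have $|f(ax)|^2+f(ax^{\sharp_a}x)^2\le \frac12P+\frac12Q$. Since $\phi$ is non-decreasing and convex, the Hermite--Hadamard inequality, exactly as in \eqref{4.1}, yields
\[
\phi\big(|f(ax)|^2+f(ax^{\sharp_a}x)^2\big)\le\tfrac12\phi(P)+\tfrac12\phi(Q).
\]
Taking the supremum over $f$ and using continuity and monotonicity of $\phi$ gives the statement. The only delicate point is keeping $P\ge0$ and checking the Crawford inequality; all the other steps are routine.
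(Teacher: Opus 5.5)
Your proof is correct and follows the paper's scheme: Lemma~\ref{nl1} for $|f(ax)|^2$, then the bound $f(ax^{\sharp_a}x)^2\le f(a(x^{\sharp_a}x)^2)\le\|x\|_a^4$, then convexity of $\phi$ applied to the two constant bounds. For the middle step, the route through Lemma~\ref{nl1} and Lemma~\ref{pi}(iv) is the paper's and the safer of your two justifications, since Lemma~\ref{sp} is stated for positive rather than $a$-positive elements. The only difference is in the Crawford term: the paper passes through $\frac12(s+t)=\frac12[(\sqrt s+\sqrt t)^2-2\sqrt{st}]$ and uses $\sqrt{st}\ge\sqrt{\mathfrak{C}_a(x^{\sharp_a}x)\mathfrak{C}_a(xx^{\sharp_a})}$, whereas you absorb it via $\mathfrak{C}_a(\cdot)\le\|x\|_a^2$. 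Your version has the merit of showing that this term only weakens the estimate, since both arguments actually give $dv_a^2(x)\le\|x\|_a^2+\|x\|_a^4$.
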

\begin{proof} Let $x\in \mathfrak{A}_a$, $x^{\sharp_a}$ be an $a$-adjoint of $x$ and $f\in \mathfrak{S}_a(\mathfrak{A})$. Then we have
	\begin{eqnarray*}
			 &&|f(ax)|^2+f(ax^{\sharp_a}x)^2 \\
			& \leq &\sqrt{f(ax^{\sharp_a}x)}\sqrt{f(axx^{\sharp_a})}+\sqrt{f(a(x^{\sharp_a}x)^{\sharp_a}x^{\sharp_a}x)}\sqrt{f(ax^{\sharp_a}x(x^{\sharp_a}x)^{\sharp_a})}\quad \text{(using Lemma~\ref{nl1})} \\
			& \leq& \frac{1}{2}\left(f(ax^{\sharp_a}x)+f(axx^{\sharp_a})\right)+f(a(x^{\sharp_a}x)^2)\quad\text{(using Lemma~\ref{pi}(iv))}  \\
            &=& \frac{1}{2}\left[\left(\sqrt{f(ax^{\sharp_a}x)}+\sqrt{f(axx^{\sharp_a})}\right)^2-2\sqrt{f(ax^{\sharp_a}x)}\sqrt{f(axx^{\sharp_a})}\right]+\frac{1}{2}f(2a(x^{\sharp_a}x)^2). \\
	\end{eqnarray*}
	As $\phi$ a is non-decreasing and convex, we have
	\begin{eqnarray*}\label{d1}
			&& \phi(|f(ax)|^2+f(ax^{\sharp_a}x)^2) \nonumber\\
			& \leq& \displaystyle\int_{0}^{1}\phi\left(t\left(\left(\sqrt{f(ax^{\sharp_a}x)}+\sqrt{f(axx^{\sharp_a})}\right)^2-2\sqrt{f(ax^{\sharp_a}x)}\sqrt{f(axx^{\sharp_a})}\right)+(1-t)\left(f(2a(x^{\sharp_a}x)^2)\right)\right)dt \nonumber\\
			& \leq &\frac{1}{2}\phi\left(\left(\sqrt{\|x^{\sharp_a}x\|_a}+\sqrt{\|xx^{\sharp_a}\|_a}\right)^2-2\sqrt{\mathfrak{C}_a(x^{\sharp_a}x)\mathfrak{C}_a(xx^{\sharp_a})}\right)+\frac{1}{2}\phi\left(2\|x^{\sharp_a}x\|_a^2\right)\\
            &=&\frac{1}{2}\phi\left(4\|x\|_a^2-2\sqrt{\mathfrak{C}_a(x^{\sharp_a}x)\mathfrak{C}_a(xx^{\sharp_a})}\right)+\frac{1}{2}\phi\left(2\|x\|_a^4\right).
	\end{eqnarray*}
		Taking the supremum over $f\in \mathfrak{S}_a(\mathfrak{A})$, we get the desired inequality.
\end{proof}
In general, for an element $x\in\mathfrak{A}_a$, neither $x^{\sharp_a}x$ nor $xx^{\sharp_a}$ is necessarily positive. However, both are $a$-self-adjoint and $a$-positive. Motivated by these observations, the $a$-absolute value of the element $x$ is defined by $|x|_a^2=ax^{\sharp_a}x$,
which is positive element. In addition, we can write $|x|_a=(ax^{\sharp_a}x)^{\frac{1}{2}}$. This property is called the uniqueness of the square root of positive elements. 

We establish below an upper estimate for $dv_a(\cdot)$ in terms of non-negative continuous functions. The proof makes use of the following lemmas.
\begin{lemma}\label{cts}
    Let $x\in\mathfrak{A}_a$ and $x^{\sharp_a}$ be an $a$-adjoint of $x$. Suppose that $ax=xa$. Then $a$ commutes with every continuous function of $|x|_a$. 
\end{lemma}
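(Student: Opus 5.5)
The plan is to show first that $a$ commutes with $|x|_a^2=ax^{\sharp_a}x$. After that, the passage to continuous functions is the standard continuous functional calculus argument.

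\emph{Step 1.} We show $a(ax^{\sharp_a}x)=(ax^{\sharp_a}x)a$. From $ax=xa$ and taking adjoints (recall $a=a^*$), we get $x^*a=ax^*$. The defining relation $ax^{\sharp_a}=x^*a$ therefore gives $ax^{\sharp_a}=ax^*$. Now compute
\[
a\,(ax^{\sharp_a}x)=a\,(x^*a)\,x=a\,x^*\,ax=(ax^*)(xa)=(x^*a)(xa)=(ax^{\sharp_a}x)\,a .
\]
The second equality uses $ax^{\sharp_a}=x^*a$, the third uses $ax=xa$, and the fourth uses $ax^*=x^*a$. So $a$ commutes with the positive element $b:=|x|_a^2=ax^{\sharp_a}x$. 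Positivity of $b$ is also visible from $b=x^*ax=(a^{1/2}x)^*(a^{1/2}x)$.

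\emph{Step 2.} Next we pass from $b$ to polynomials and then to continuous functions. Since $a$ commutes with $b$ and $b^*=b$, $a$ commutes with every polynomial $p(b)$. Let $h$ be continuous on $\sigma(b)\subseteq[0,\infty)$. By Weierstrass there are polynomials $p_n\to h$ uniformly on $\sigma(b)$, and the isometric functional calculus then gives $p_n(b)\to h(b)$ in norm. Multiplication is continuous, so $ah(b)=\lim ap_n(b)=\lim p_n(b)a=h(b)a$.

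\emph{Step 3.} Every continuous function $g$ of $|x|_a=b^{1/2}$ is a continuous function of $b$, namely $g(|x|_a)=(g\circ\sqrt{\cdot})(b)$ by the composition rule of the functional calculus. Hence $a$ commutes with $g(|x|_a)$, and in particular with $|x|_a$ itself.

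The only delicate point is Step 1. The $a$-adjoint is not unique, so we should avoid claiming $x^{\sharp_a}=x^*$. We only use the identity $ax^{\sharp_a}=ax^*$, which holds for any choice of $a$-adjoint. Steps 2 and 3 are routine.
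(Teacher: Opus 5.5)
Your proof is correct and follows essentially the same route as the paper: show $a|x|_a^2=|x|_a^2a$ using $ax^{\sharp_a}=x^*a$ and $ax^*=x^*a$, then extend to continuous functions by polynomial approximation. The only difference is the passage from $|x|_a^2$ to $|x|_a$: you use the composition rule $g(|x|_a)=(g\circ\sqrt{\cdot})(|x|_a^2)$, while the paper appeals to uniqueness of the positive square root, and your version states that step more transparently.
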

\begin{proof}
    Since $ax=xa$, we have $ax^*=x^*a$. Also, $x\in\mathfrak{A}_a$ gives $ax^{\sharp_a}=x^*a$. Now,
    \[a|x|_a^2=a(ax^{\sharp_a}x)=a(x^*ax)=a(x^*xa)=|x|_a^2a.\] As $|x|_a$ is the unique positive square root of $|x|_a^2$, it follows that $a|x|_a=|x|_aa.$ Hence, $a$ commutes with every polynomial in $|x|_a$ and therefore, by the Stone–Weierstrass theorem, with every continuous function of $|x|_a$.
\end{proof}
\begin{lemma}\label{inequ7}\cite{sattari2015some} (Power-Young inequality)
	Let $u, v \geq 0$ and $\alpha, \beta>1$ be such that $\frac{1}{\alpha}+\frac{1}{\beta}=1$. Then 
	\[
		uv\leq \frac{1}{\alpha}u^\alpha+\frac{1}{\beta}v^\beta.
	\]
\end{lemma}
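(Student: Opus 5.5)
The plan is the standard weighted AM--GM argument via the concavity of the logarithm. Both $\alpha$ and $\beta$ are finite and exceed $1$, and $\frac{1}{\alpha}+\frac{1}{\beta}=1$, so $\frac{1}{\alpha},\frac{1}{\beta}\in(0,1)$ are convex weights.

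First I will dispose of the degenerate case. If $u=0$ or $v=0$, the left-hand side is $0$. The right-hand side is a sum of non-negative terms, so the inequality is trivial. From now on I assume $u,v>0$.

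The main step uses the concavity of $\log$ on $(0,\infty)$. Applying it to the two positive numbers $u^\alpha$ and $v^\beta$ with weights $\frac{1}{\alpha}$ and $\frac{1}{\beta}$ gives
\[
\log\!\left(\frac{1}{\alpha}u^\alpha+\frac{1}{\beta}v^\beta\right)\geq \frac{1}{\alpha}\log u^\alpha+\frac{1}{\beta}\log v^\beta=\log u+\log v=\log(uv).
\]
Since $\exp$ is increasing, exponentiating both sides yields $uv\leq \frac{1}{\alpha}u^\alpha+\frac{1}{\beta}v^\beta$, which is the claim.

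As a cross-check, the result can also be read off Lemma~\ref{young}(i). Take $\phi(t)=t^\alpha/\alpha$. Then $p(t)=t^{\alpha-1}$, and the right inverse is $q(s)=s^{1/(\alpha-1)}=s^{\beta-1}$, because $\beta-1=1/(\alpha-1)$. Integrating gives the complementary function $\psi(s)=s^\beta/\beta$, as already noted in Section~\ref{sec2}. Lemma~\ref{young}(i) then gives the inequality directly.

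There is no real obstacle in either route. The only point requiring care is that the logarithm argument needs strictly positive inputs, which is why the zero case is handled separately at the start.
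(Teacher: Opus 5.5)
Your proof is correct. The paper gives no proof of this lemma; it only quotes it from \cite{sattari2015some}. Your weighted AM--GM argument via the concavity of $\log$, with the case $u=0$ or $v=0$ handled separately, is therefore a self-contained justification that uses nothing beyond elementary calculus.

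Your cross-check is the route that fits the paper's own framework best. Section~\ref{sec2} already records that $\phi(u)=u^p/p$ has complementary Orlicz function $\psi(u)=u^q/q$. The Power--Young inequality is thus exactly Lemma~\ref{young}(i) for this complementary pair, and your computation $q(s)=s^{1/(\alpha-1)}=s^{\beta-1}$ confirms the identification.

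The two routes trade off differently. The log-concavity argument is independent of the Orlicz machinery. The Orlicz argument shows the lemma did not need a separate citation, since it is a special case of a result the paper already states and uses.
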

\begin{lemma}\label{lemma2}\cite[Lemma~2.1]{rashid2026}
 Let $T \in\mathscr{B}_A(\mathscr{H})$ be such that $AT=TA$. If $f_1,g_1$ are two non-negative continuous functions on $[0, \infty)$ such that $f_1(t)g_1(t)=t$, for all $t\in[0, \infty)$. Then 
 \[
 	|\langle Tx, y\rangle_A|\leq \|f_1(|T|_A)x\|_A\|g_1(|T^{\sharp_A}|_A)y\|_A,\quad \mathrm{for}\ \mathrm{all}\ x, y \in\mathscr{H}.
 \]
\end{lemma}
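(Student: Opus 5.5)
This statement is Lemma~\ref{lemma2}, a spatial mixed Schwarz inequality in the semi-inner product $\langle\cdot,\cdot\rangle_A$. My plan is to reduce it to Kato's classical mixed Schwarz inequality by using the commutation $AT=TA$.

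The first step is to simplify the $A$-adjoint. Since $AT=TA$, taking adjoints gives $T^*A=AT^*$. Hence $T^*$ itself solves $AX=T^*A$, and on $\overline{R(A)}$ we may replace $T^{\sharp_A}$ by $T^*$. More precisely, $AT^{\sharp_A}=AT^*$, so any $A$-inner product involving $T^{\sharp_A}$ can be computed with $T^*$. Next I would identify the two absolute values, using the paper's convention $|T|_A^2=AT^{\sharp_A}T$:
\[
|T|_A^2=AT^*T=A^{1/2}T^*TA^{1/2},
\]
which holds because $A^{1/2}$ also commutes with $T$ and $T^*$ (it is a continuous function of $A$). Writing $S=A^{1/2}T$, which equals $TA^{1/2}$, this gives $|T|_A^2=S^*S$, so $|T|_A=|S|$. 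By the same reasoning $|T^{\sharp_A}|_A=|S^*|$.

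With this in hand, I would rewrite the left side as
\[
\langle Tx,y\rangle_A=\langle A^{1/2}Tx,A^{1/2}y\rangle=\langle S x, A^{1/2}y\rangle .
\]
I expect the main obstacle to be that the two sides carry the $A$-weights differently: the right side is $\|f_1(|S|)x\|_A\,\|g_1(|S^*|)y\|_A$, that is, $\|A^{1/2}f_1(|S|)x\|\,\|A^{1/2}g_1(|S^*|)y\|$. To align them, I would use the polar decomposition $S=U|S|$ and write $\langle Sx,A^{1/2}y\rangle=\langle U f_1(|S|)g_1(|S|)x,A^{1/2}y\rangle$.

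The fix is then to move one factor $A^{1/4}$ to each side. Every operator involved, namely $|S|$, $|S^*|$, their continuous functions, and $U$ (via commutation with the intertwining), commutes with $A$. I would therefore apply Kato's inequality to the operator $T$ and the vectors $A^{1/2}x$ and $A^{1/2}y$. Concretely, I would use the factorisation
\[
\langle Tx,y\rangle_A=\langle T A^{1/2}x, A^{1/2}y\rangle,
\]
and Kato's inequality $|\langle T u,v\rangle|\le \|f_1(|T|)u\|\,\|g_1(|T^*|)v\|$ with $u=A^{1/2}x$ and $v=A^{1/2}y$. This gives the bound $\|A^{1/2}f_1(|T|)x\|\,\|A^{1/2}g_1(|T^*|)y\|$.

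The remaining check is to compare $|T|$ with $|T|_A$ under the commutation. If the convention $|T|_A=(T^{\sharp_A}T)^{1/2}$ is intended, the two coincide on $\overline{R(A)}$, and this is the only place where the precise definition matters. I would verify that step carefully, and otherwise follow the cited proof in \cite{rashid2026}.
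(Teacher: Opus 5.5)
\textbf{Verdict: genuine gap.} The paper gives no argument of its own, only the citation, so the comparison is with the standard proof. Your core mechanism is the right one: write $\langle Tx,y\rangle_A=\langle TA^{1/2}x,A^{1/2}y\rangle$, apply Kato's mixed Schwarz inequality to $T$, and commute $A^{1/2}$ past $f_1(|T|)$ and $g_1(|T^*|)$.

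The step you postpone, identifying $f_1(|T|)$ with $f_1(|T|_A)$ inside the $A$-seminorm, is the only content beyond Kato, and as you set it up it fails. You adopted $|T|_A^2=AT^{\sharp_A}T$, the analogue of $|x|_a^2=ax^{\sharp_a}x$; this is also what the paper feeds into Lemma~\ref{lem1} via $|\pi(x)|_A=\pi(|x|_a)$. Under it you correctly found $|T|_A=|A^{1/2}T|=A^{1/2}|T|$, which differs from $|T|$ on $\overline{R(A)}$ by the factor $A^{1/2}$. With that convention the inequality is actually false. Take $\mathscr{H}=\mathbb{C}$, $A=\tfrac14$, $T=1$, $x=y=1$ and $f_1(t)=g_1(t)=\sqrt{t}$. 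Then $|T|_A=|T^{\sharp_A}|_A=\tfrac12$, the left side is $\tfrac14$, and the right side is $\bigl(\tfrac12\cdot\tfrac{1}{\sqrt2}\bigr)^2=\tfrac18$. So no redistribution of $A^{1/4}$ factors or polar-decomposition bookkeeping can close the argument. That middle part of your plan is a dead end, and the comparison of $|T|$ with $|T|_A$ is not a routine check to be deferred.

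The statement has to be read with $|T|_A=(T^{\sharp_A}T)^{1/2}$, and you then need to prove the identification, not just assert it.
\begin{itemize}
\item $P_A$ is the strong limit of $A^{1/n}$, so it commutes with $T$ and $T^*$.
\item $P_AT^*$ solves $AX=T^*A$ and has range in $\overline{R(A)}$, so $T^{\sharp_A}=P_AT^*$ and $(T^{\sharp_A})^{\sharp_A}=P_ATP_A=P_AT$.
\item Hence $T^{\sharp_A}T=P_A|T|^2$ and $(T^{\sharp_A})^{\sharp_A}T^{\sharp_A}=P_A|T^*|^2$. Both are positive as products of commuting positive operators, so $T^{\sharp_A}T$ has a square root at all, with $|T|_A=P_A|T|$ and $|T^{\sharp_A}|_A=P_A|T^*|$.
\item Since $P_A$ reduces $|T|$, $f_1(P_A|T|)=f_1(|T|)P_A+f_1(0)(I-P_A)$. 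Because $A^{1/2}(I-P_A)=0$, this gives $A^{1/2}f_1(|T|_A)=A^{1/2}f_1(|T|)$. This is exactly why a possibly nonzero $f_1(0)$ is harmless, and why ``coinciding on $\overline{R(A)}$'' suffices.
\end{itemize}
Therefore $\|f_1(|T|_A)x\|_A=\|f_1(|T|)A^{1/2}x\|$ and $\|g_1(|T^{\sharp_A}|_A)y\|_A=\|g_1(|T^*|)A^{1/2}y\|$. Kato's inequality with $u=A^{1/2}x$ and $v=A^{1/2}y$ then finishes the proof.
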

\begin{lemma}\label{lem1}
	Let $x\in \mathfrak{A}_a$ with $ax=xa$, $x^{\sharp_a}$ be an $a$-adjoint of $x$ and $f\in \mathfrak{S}_a(\mathfrak{A})$. If $f_1, g_1$ are two non-negative continuous functions on $[0, \infty)$ such that $f_1(t)g_1(t)=t$, for all $t\in[0, \infty)$. Then \[
		|f(ax)|\leq \sqrt{f(af_1^2(|x|_a))}\sqrt{f(ag_1^2(|x^{\sharp_a}|_a))}.
	\]
\end{lemma}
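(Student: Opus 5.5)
The plan is to transfer Lemma~\ref{lemma2} to the algebra through the GNS-type representation of Section~\ref{sec3}, following the pattern of the proof of Lemma~\ref{nl1}.

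\textbf{Step 1: represent $f$.} Fix $f\in\mathfrak{S}_a(\mathfrak{A})$. Take $\pi:\mathfrak{A}\to\mathscr{B}(\mathscr{H})$ and the cyclic vector $\xi$ with $f(y)=\langle\pi(y)\xi,\xi\rangle$, and put $A=\pi(a)$, so that $\|\xi\|_A=1$. Let $T=\pi(x)$. By Lemma~\ref{pi}(iii), $T\in\mathscr{B}_A(\mathscr{H})$ and $T^{\sharp_A}=\pi(x^{\sharp_a})$. Since $ax=xa$ and $\pi$ is a homomorphism, $AT=TA$. Lemma~\ref{lemma2} with both vectors equal to $\xi$ then gives
\[
|f(ax)|=|\langle T\xi,\xi\rangle_A|\leq \|f_1(|T|_A)\xi\|_A\,\|g_1(|T^{\sharp_A}|_A)\xi\|_A .
\]

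\textbf{Step 2: pull the functional calculus back.} We have $|T|_A^2=AT^{\sharp_A}T=\pi(ax^{\sharp_a}x)=\pi(|x|_a^2)$. Because $\pi$ is a $*$-homomorphism, it commutes with the continuous functional calculus of positive elements: this holds for polynomials and passes to continuous functions by uniform approximation on the spectrum. Hence $|T|_A=\pi(|x|_a)$ and $f_1(|T|_A)=\pi(f_1(|x|_a))$. In the same way, $|T^{\sharp_A}|_A^2=A\,(T^{\sharp_A})^{\sharp_A}T^{\sharp_A}=\pi(a(x^{\sharp_a})^{\sharp_a}x^{\sharp_a})$, which is $\pi(|x^{\sharp_a}|_a^2)$ for any $a$-adjoint of $x^{\sharp_a}$. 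The identity $a(x^{\sharp_a})^{\sharp_a}=ax$ shows this value does not depend on the choice of $a$-adjoint. Therefore $g_1(|T^{\sharp_A}|_A)=\pi(g_1(|x^{\sharp_a}|_a))$.

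\textbf{Step 3: evaluate the norms.} For a self-adjoint $h\in\mathfrak{A}$,
\[
\|\pi(h)\xi\|_A^2=\langle A\pi(h)\xi,\pi(h)\xi\rangle=\langle\pi(hah)\xi,\xi\rangle=f(hah).
\]
By Lemma~\ref{cts}, $a$ commutes with $f_1(|x|_a)$, so $f_1(|x|_a)\,a\,f_1(|x|_a)=af_1^2(|x|_a)$. This gives the first factor $\sqrt{f(af_1^2(|x|_a))}$.

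For the second factor I would apply Lemma~\ref{cts} to $x^{\sharp_a}$. For that I need $x^{\sharp_a}\in\mathfrak{A}_a$, which is Lemma~\ref{pi}(i), and $ax^{\sharp_a}=x^{\sharp_a}a$. The latter follows from $ax^{\sharp_a}=x^*a=ax^*$, using that $ax=xa$ implies $ax^*=x^*a$. This yields $\sqrt{f(ag_1^2(|x^{\sharp_a}|_a))}$ and hence the claimed inequality.

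\textbf{Main obstacle.} The delicate step is the commutation needed for $x^{\sharp_a}$. The relation $ax^{\sharp_a}=ax^*$ does not force $x^{\sharp_a}a=ax^{\sharp_a}$ when $a$ has a kernel. There are two ways around this. If that commutation fails, I would first try choosing the $a$-adjoint $x^{\sharp_a}=x^*$, which is legitimate since $ax^*=x^*a$. Alternatively, I would avoid Lemma~\ref{cts} for the second factor by noting that $|x^{\sharp_a}|_a^2=a\,x\,x^{\sharp_a}=axx^*$ commutes with $a$ directly. Indeed $a\cdot axx^*=axx^*a$ because $x$ and $x^*$ commute with $a$. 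So $a$ commutes with $|x^{\sharp_a}|_a$ and with all continuous functions of it, exactly as in the proof of Lemma~\ref{cts}.
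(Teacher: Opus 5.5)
Your proposal follows the paper's own proof step for step: represent $f$, apply Lemma~\ref{lemma2} at $x=y=\xi$, identify $|\pi(x)|_A=\pi(|x|_a)$, finish with Lemma~\ref{cts}. It breaks at the same point the paper's proof does, namely where Steps~1 and~2 are combined.

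\textbf{Where it fails.} To get $f_1(|T|_A)=\pi(f_1(|x|_a))$ you need $|T|_A^2=AT^{\sharp_A}T$. With that meaning of $|T|_A$, Lemma~\ref{lemma2} is false. Take $\mathscr{H}=\mathbb{C}$, $A=c\in(0,1)$, $T=1$, $\xi=c^{-1/2}$ (so $\|\xi\|_A=1$), and $f_1(t)=g_1(t)=\sqrt{t}$. Then $\langle T\xi,\xi\rangle_A=1$ and $|T|_A=|T^{\sharp_A}|_A=\sqrt{c}$, so the right-hand side is $\|c^{1/4}\xi\|_A^2=\sqrt{c}<1$.

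\textbf{The statement itself is false.} In any unital $\mathfrak{A}$ take $a=c\mathbf{1}$ with $0<c<1$ and $x=\mathbf{1}$. Then $x^{\sharp_a}=\mathbf{1}$, $ax=xa$, and every $f\in\mathfrak{S}_a(\mathfrak{A})$ satisfies $f(\mathbf{1})=1/c$. Also $|x|_a=|x^{\sharp_a}|_a=\sqrt{c}\,\mathbf{1}$. With $f_1=g_1=\sqrt{\cdot}$ this gives $f(af_1^2(|x|_a))=f(ag_1^2(|x^{\sharp_a}|_a))=f(c^{3/2}\mathbf{1})=\sqrt{c}$. The claimed bound would then read $1=|f(ax)|\leq\sqrt{c}$, which is false. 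So no argument can close this gap.

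\textbf{Why.} The error is a scaling mismatch. When $ax=xa$ we have $|x|_a^2=x^*ax=a|x|^2$, that is, $|x|_a=a^{1/2}|x|$. This extra factor $a^{1/2}$ sits inside $f_1,g_1$, on top of the $a$ already present in $f(a\,\cdot)$. A valid mixed Schwarz inequality for commuting $A$ involves $|T|$ itself (equivalently $(T^{\sharp_A}T)^{1/2}=P_A|T|$ for the canonical $A$-adjoint), with no extra factor of $A$.

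\textbf{What your framework does prove.} It gives the corrected inequality
\[
|f(ax)|\leq\sqrt{f(af_1^2(|x|))}\,\sqrt{f(ag_1^2(|x^*|))}.
\]
Since $a^{1/2}$ commutes with $x$ and $x^*$, we have $f(ax)=\langle\pi(x)\eta,\eta\rangle$ with $\eta=\pi(a^{1/2})\xi$. Kittaneh's mixed Schwarz inequality gives $|\langle\pi(x)\eta,\eta\rangle|\leq\|f_1(|\pi(x)|)\eta\|\,\|g_1(|\pi(x)^*|)\eta\|$. Moreover $\|f_1(|\pi(x)|)\eta\|^2=f(a^{1/2}f_1^2(|x|)a^{1/2})=f(af_1^2(|x|))$, and similarly for $g_1$ with $|x^*|$.

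Your remark that Lemma~\ref{cts} cannot be applied directly to $x^{\sharp_a}$ when $a$ has a kernel is correct, and the paper passes over it. Your fix, noting that $|x^{\sharp_a}|_a^2=axx^*$ commutes with $a$, is valid. But it repairs only a secondary point and does not rescue the inequality as stated.
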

\begin{proof} Let $x\in \mathfrak{A}_a$ with $ax=xa$, $x^{\sharp_a}$ be an $a$-adjoint of $x$, $f\in \mathfrak{S}_a(\mathfrak{A})$ and $\xi\in\mathscr{H}$ with $\|\xi\|_A=1$. Then
	\begin{eqnarray*}
			 |f(ax)| 
			& =& |\langle\pi(x)\xi, \xi\rangle_A|\\
			& \leq& \left\|f_1\left(|\pi(x)|_A\right)\xi\right\|_A\left\|g_1\left(|\pi(x)^{\sharp_A}|_A\right)\xi\right\|_A \quad\text{(from Lemma~\ref{lemma2})}\\
			& =& \sqrt{\langle A f_1(|\pi(x)|_A\xi,f_1(|\pi(x)|_A\xi\rangle}\sqrt{\langle A g_1(|\pi(x)^{\sharp_A}|_A\xi,g_1(|\pi(x)^{\sharp_A}|_A\xi\rangle} \\
			& = & \sqrt{\langle f_1(|\pi(x)|_A)^*Af_1(|\pi(x)|_A)\xi,\xi\rangle} \sqrt{\langle g_1(|\pi(x)^{\sharp_A}|_A)^*Ag_1(|\pi(x)^{\sharp_A}|_A)\xi,\xi\rangle}\\
            &=& \sqrt{\langle f_1(\pi(|x|_a))^*\pi(a)f_1(\pi(|x|_a))\xi,\xi\rangle} \sqrt{\langle g_1(\pi(|x^{\sharp_a}|_a))^*\pi(a)g_1(\pi(|x^{\sharp_a}|_a)\xi,\xi\rangle}\\
            &&\text{(by Lemma~\ref{pi}(iii) and using $|\pi(x)|_A=\pi(|x|_a)$)}\\
			& =& \sqrt{\langle\pi(f_1(|x|_a)af_1(|x|_a))\xi,\xi\rangle}\sqrt{\langle\pi(g_1(|x^{\sharp_a}|_a)ag_1(|x^{\sharp_a}|_a))\xi,\xi\rangle} \quad\text{(using continuous function calculus)}\\
			& = &\sqrt{f(af_1^2(|x|_a))}\sqrt{f(ag_1^2(|x^{\sharp_a}|_a))}\quad\text{(by Lemma~\ref{cts})}.
	\end{eqnarray*}
\end{proof}
\begin{theorem}\label{thm.4.15}
		Let $x\in \mathfrak{A}_a$ with $ax=xa$, $x^{\sharp_a}$ be an $a$-adjoint of $x$. Assume that $f_1, g_1, f_2, g_2$ are  non-negative continuous functions on $[0, \infty)$ such that $f_1(t)g_1(t)=t$ and $f_2(t)g_2(t)=t$, for all $t\in[0, \infty)$. Also, $\phi$ be an Orlicz function and $\alpha_1, \alpha_2, \beta_1, \beta_2\geq 1$ such that $\frac{1}{\alpha_1}+\frac{1}{\beta_1}=1$ and $\frac{1}{\alpha_2}+\frac{1}{\beta_2}=1$. Then
		\begin{eqnarray*}
			\phi(dv_a^2(x))
            &\leq& \phi\left(\frac{1}{\alpha_1}\|f_1^{2\alpha_1}(|x|_a)\|_a\right)+\phi\left(\frac{1}{\beta_1}\|g_1^{2\beta_1}(|x^{\sharp_a}|_a)\|_a\right)+\phi\left(\frac{1}{\alpha_2}\|f_2^{2\alpha_2}(|x^{\sharp_a}x|_a)\|_a\right)\\
            &&+\phi\left(\frac{1}{\beta_2}\|g_2^{2\beta_2}(|x^{\sharp_a}x|_a)\|_a\right).
		\end{eqnarray*}
\end{theorem}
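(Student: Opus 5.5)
Fix $f\in\mathfrak{S}_a(\mathfrak{A})$. I will bound the two terms $|f(ax)|^2$ and $f(ax^{\sharp_a}x)^2$ separately with Lemma~\ref{lem1}, then apply the power-Young inequality (Lemma~\ref{inequ7}), and finally use the properties of $\phi$.

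For the first term, Lemma~\ref{lem1} with $f_1,g_1$ gives
\[
|f(ax)|^2\leq f(af_1^2(|x|_a))\,f(ag_1^2(|x^{\sharp_a}|_a)).
\]
Lemma~\ref{inequ7} with exponents $\alpha_1,\beta_1$ then bounds the right-hand side by
\[
\frac{1}{\alpha_1}f(af_1^2(|x|_a))^{\alpha_1}+\frac{1}{\beta_1}f(ag_1^2(|x^{\sharp_a}|_a))^{\beta_1}.
\]
By Lemma~\ref{cts}, $a$ commutes with $f_1^2(|x|_a)$, which is positive, so $f_1^2(|x|_a)\in\mathfrak{A}_a$ with $a$-adjoint equal to itself. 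Lemma~\ref{sp} then gives $f(af_1^2(|x|_a))^{\alpha_1}\leq f(af_1^{2\alpha_1}(|x|_a))$. This is at most $v_a(f_1^{2\alpha_1}(|x|_a))\leq\|f_1^{2\alpha_1}(|x|_a)\|_a$, using $v_a\le\|\cdot\|_a$, which follows from \eqref{cons1} and the definition of $\|\cdot\|_a$. The $g_1$ term is handled the same way. Here one needs $a$ to commute with $|x^{\sharp_a}|_a$; this follows as in Lemma~\ref{cts}, since $ax=xa$ makes $x^{\sharp_a}$ interact well with $a$.

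For the second term, I apply the same argument to the element $x^{\sharp_a}x$, which lies in $\mathfrak{A}_a$ by Lemma~\ref{pi}(ii), using $f_2,g_2$ and $\alpha_2,\beta_2$. Since $x^{\sharp_a}x$ is $a$-self-adjoint and commutes with $a$ when $ax=xa$, the factor involving $|(x^{\sharp_a}x)^{\sharp_a}|_a$ can be identified with $|x^{\sharp_a}x|_a$ inside $f(a\,\cdot)$. This produces the two terms with $|x^{\sharp_a}x|_a$ in the statement. Also, $f(ax^{\sharp_a}x)\ge0$, so its square equals $|f(a\,x^{\sharp_a}x)|^2$, which is exactly what Lemma~\ref{lem1} controls.

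Summing, $|f(ax)|^2+f(ax^{\sharp_a}x)^2\leq S_1+S_2+S_3+S_4$, where $S_i$ are the four norm quantities in the statement. Since $\phi$ is non-decreasing,
\[
\phi\bigl(|f(ax)|^2+f(ax^{\sharp_a}x)^2\bigr)\le\phi(S_1+S_2+S_3+S_4).
\]
By convexity, $\phi(S_1+S_2+S_3+S_4)\le\frac14\sum\phi(4S_i)$. This does not directly give $\sum\phi(S_i)$, so I will instead use superadditivity of $\phi$: convexity together with $\phi(0)=0$ gives $\phi(u)+\phi(v)\le\phi(u+v)$, which is the wrong direction for what is needed here.

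This last step is the main obstacle, since the stated inequality needs $\phi(\sum S_i)\le\sum\phi(S_i)$. I would first check whether the intended argument uses the equal-weight form $\phi\bigl(\frac14\sum 4S_i\bigr)\le\frac14\sum\phi(4S_i)$ together with Lemma~\ref{inequ7}. If the constants cannot be made to match, I would prove the bound with the weights $\frac14\phi(4S_i)$, or alternatively exploit $\phi(\lambda u)\le\lambda\phi(u)$. A secondary point to verify is the commutation of $a$ with $|x^{\sharp_a}|_a$ and $|x^{\sharp_a}x|_a$, which is needed for Lemmas~\ref{sp} and~\ref{lem1}. Taking the supremum over $f$ then finishes the proof.
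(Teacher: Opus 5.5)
\textbf{The statement is false, so the gap you flagged cannot be closed.} Up to the point where you get stuck, your route is the paper's: Lemma~\ref{lem1}, then Lemma~\ref{inequ7}, then Lemma~\ref{sp}, giving $|f(ax)|^2+f(ax^{\sharp_a}x)^2\le S_1+S_2+S_3+S_4$ for each $f\in\mathfrak{S}_a(\mathfrak{A})$.

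For a counterexample, take $a=x=x^{\sharp_a}=\mathbf{1}$, $f_1=g_1=f_2=g_2=\sqrt{t}$, $\alpha_i=\beta_i=2$ and $\phi(t)=t^2$. Then $|x|_a=|x^{\sharp_a}|_a=|x^{\sharp_a}x|_a=\mathbf{1}$ and each $S_i=\frac12$. Since $dv_a^2(\mathbf{1})=2$, the claimed inequality reads $4\le 4\cdot\frac14=1$. With $\phi(t)=t^r$ it reads $2^r\le 2^{2-r}$, which fails for every $r>1$.

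The paper's proof gets past this point only through two invalid moves. First, its Hermite--Hadamard step leaves $\phi\bigl(\frac12(|f(ax)|^2+f(ax^{\sharp_a}x)^2)\bigr)$ on the left, not the full quantity. Second, it then splits $\phi\bigl(\tfrac{1}{\alpha_1}U+\tfrac{1}{\beta_1}V\bigr)\le\phi\bigl(\tfrac{1}{\alpha_1}U\bigr)+\phi\bigl(\tfrac{1}{\beta_1}V\bigr)$. That is exactly the subadditivity you correctly noted goes the wrong way for a convex $\phi$ with $\phi(0)=0$.

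\textbf{What your argument does prove.} Drop the attempt to make the constants match. Jensen gives $\phi(dv_a^2(x))\le\frac14\sum_{i=1}^4\phi(4S_i)$. Alternatively, keep the Hermite--Hadamard step but split with convexity correctly. Writing $N_1=\|f_1^{2\alpha_1}(|x|_a)\|_a$, $N_2=\|g_1^{2\beta_1}(|x^{\sharp_a}|_a)\|_a$, $N_3=\|f_2^{2\alpha_2}(|x^{\sharp_a}x|_a)\|_a$ and $N_4=\|g_2^{2\beta_2}(|x^{\sharp_a}x|_a)\|_a$, this gives
\[
\phi\Bigl(\tfrac12\, dv_a^2(x)\Bigr)\le\tfrac12\Bigl[\tfrac{1}{\alpha_1}\phi(N_1)+\tfrac{1}{\beta_1}\phi(N_2)+\tfrac{1}{\alpha_2}\phi(N_3)+\tfrac{1}{\beta_2}\phi(N_4)\Bigr].
\]
Both bounds reduce to $dv_a^2(x)\le S_1+S_2+S_3+S_4$ when $\phi(t)=t$, and both are sharp in the example above.

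\textbf{A smaller error.} You assert that $x^{\sharp_a}x$ commutes with $a$. This can fail for an arbitrary $a$-adjoint when $a$ is not injective. For example, take $a=P$ a nontrivial projection, $x=\mathbf{1}$, and $x^{\sharp_a}=\mathbf{1}+(\mathbf{1}-P)K$ with $(\mathbf{1}-P)KP\neq0$; then Lemma~\ref{lem1} does not apply to $x^{\sharp_a}x$ directly. Apply it instead to $x^*x$, which commutes with $a$ and is its own $a$-adjoint. This changes nothing, because $f(ax^{\sharp_a}x)=f(x^*ax)=f(ax^*x)$ and $|x^*x|_a^2=a(x^*x)^2=|x^{\sharp_a}x|_a^2$.
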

\begin{proof} Let $x\in \mathfrak{A}_a$ with $ax=xa$, $x^{\sharp_a}$ be an $a$-adjoint of $x$ and $f\in \mathfrak{S}_a(\mathfrak{A})$. Then
	\begin{eqnarray*}
			 |f(ax)|^2 
			& \leq& f(af_1^2(|x|_a))f(ag_1^2(|x^{\sharp_a}|_a)) \quad\text{(from Lemma~\ref{lem1})} \\
			& \leq& \frac{1}{\alpha_1}(f(af_1^2(|x|_a)))^{\alpha_1}+\frac{1}{\beta_1}(f(ag_1^2(|x^{\sharp_a}|_a)))^{\beta_1} \quad\text{(from Lemma~\ref{inequ7})} \\
			& \leq& f\left(\frac{1}{\alpha_1}\left(af_1^{2\alpha_1}(|x|_a)\right)+\frac{1}{\beta_1}\left(ag_1^{2\beta_1}(|x^{\sharp_a}|_a)\right)\right)\quad\text{(by Lemma~\ref{sp})}.
	\end{eqnarray*}
	Similarly, 
	\[
	f(ax^{\sharp_a}x)^2\leq f\left(\frac{1}{\alpha_2}\left(af_1^{2\alpha_2}(|x^{\sharp_a}x|_a)\right)+\frac{1}{\beta_2}\left(ag_1^{2\beta_2}(|x^{\sharp_a}x|_a)\right)\right).
	\]
	Therefore,
	\[
			|f(ax)|^2+f(ax^{\sharp_a}x)^2
			 \leq f\left(\frac{1}{\alpha_1}af_1^{2\alpha_1}(|x|_a)+\frac{1}{\beta_1}ag_1^{2\beta_1}(|x^{\sharp_a}|_a)\right)+f\left(\frac{1}{\alpha_2}af_1^{2\alpha_2}(|x^{\sharp_a}x|_a)+\frac{1}{\beta_2}ag_1^{2\beta_2}(|x^{\sharp_a}x|_a)\right). 
		\]
	Hence, by the non-negativity and convexity property of $\phi$, we get
	 \begin{eqnarray*}
		&&\phi\left(\frac{|f(ax)|^2+f(ax^{\sharp_a}x)^2}{2}\right) \nonumber\\
			& \leq& \displaystyle\int_{0}^{1}\phi\left(tf\left(\frac{1}{\alpha_1}af_1^{2\alpha_1}(|x|_a)+\frac{1}{\beta_1}ag_1^{2\beta_1}(|x^{\sharp_a}|_a)\right)+(1-t)f\left(\frac{1}{\alpha_2}af_2^{2\alpha_2}(|x^{\sharp_a}x|_a)+\frac{1}{\beta_2}ag_2^{2\beta_2}(|x^{\sharp_a}x|_a)\right)\right)dt \nonumber\\
			& \leq& \frac{1}{2}\phi\left(f\left(\frac{1}{\alpha_1}af_1^{2\alpha_1}(|x|_a)+\frac{1}{\beta_1}ag_1^{2\beta_1}(|x^{\sharp_a}|_a)\right)\right)+\frac{1}{2}\phi\left(f\left(\frac{1}{\alpha_2}af_2^{2\alpha_2}(|x^{\sharp_a}x|_a)+\frac{1}{\beta_2}ag_2^{2\beta_2}(|x^{\sharp_a}x|_a)\right)\right) \nonumber\\
			& \leq& \frac{1}{2}\bigg(\phi\left(\frac{1}{\alpha_1}f(af_1^{2\alpha_1}(|x|_a))\right)+\phi\left(\frac{1}{\beta_1}f(ag_1^{2\beta_1}(|x^{\sharp_a}|_a))\right)+\phi\left(\frac{1}{\alpha_2}f(af_2^{2\alpha_2}(|x^{\sharp_a}x|_a))\right)\\
            &&+\phi\left(\frac{1}{\beta_2}f(ag_2^{2\beta_2}(|x^{\sharp_a}x|_a))\right)\bigg)\nonumber. 
            \end{eqnarray*}
	Taking the supremum over $f\in \mathfrak{S}_a(\mathfrak{A})$, we get the desired result.
\end{proof}
\begin{corollary}\label{new thm_1}
   Let $x\in \mathfrak{A}_a$ with $ax=xa$, $x^{\sharp_a}$ be an $a$-adjoint of $x$. Assume that $f_1, g_1, f_2, g_2$ are  non-negative continuous functions on $[0, \infty)$ such that $f_1(t)g_1(t)=t$ and $f_2(t)g_2(t)=t$, for all $t\in[0, \infty)$. Also, $\alpha_1, \alpha_2, \beta_1, \beta_2\geq 1$ such that $\frac{1}{\alpha_1}+\frac{1}{\beta_1}=1$ and $\frac{1}{\alpha_2}+\frac{1}{\beta_2}=1$. Then
    \[dv_a^2(x)\leq \left\|\frac{1}{\alpha_1}f_1^{2\alpha_1}(|x|_a)+\frac{1}{\beta_1}g_1^{2\beta_1}(|x^{\sharp_a}|_a)+\frac{1}{\alpha_2}f_2^{2\alpha_2}(|x^{\sharp_a}x|_a)+\frac{1}{\beta_2}g_2^{2\beta_2}(|x^{\sharp_a}x|_a)\right\|_a.\]
\end{corollary}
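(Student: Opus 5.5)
The plan is to redo the argument of Theorem~\ref{thm.4.15} with $\phi(t)=t$, but without splitting the final bound into four separate norms. Fix $f\in\mathfrak{S}_a(\mathfrak{A})$ and start from Lemma~\ref{lem1} applied to $x$. Then use Lemma~\ref{inequ7} (Power--Young) and Lemma~\ref{sp}, exactly as in the proof of Theorem~\ref{thm.4.15}, to obtain
\[
|f(ax)|^2\leq f\Big(a\Big(\tfrac{1}{\alpha_1}f_1^{2\alpha_1}(|x|_a)+\tfrac{1}{\beta_1}g_1^{2\beta_1}(|x^{\sharp_a}|_a)\Big)\Big).
\]
For Lemma~\ref{sp} we need the elements $f_1^2(|x|_a)$ and $g_1^2(|x^{\sharp_a}|_a)$ to be positive and to lie in $\mathfrak{A}_a$; both follow from the continuous functional calculus, since $a$ commutes with them by Lemma~\ref{cts}. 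The same argument applied to $x^{\sharp_a}x$ gives the analogous bound for $f(ax^{\sharp_a}x)^2=|f(ax^{\sharp_a}x)|^2$, with $f_2,g_2,\alpha_2,\beta_2$. Here $x^{\sharp_a}x$ has to satisfy the hypotheses of Lemma~\ref{lem1}. It belongs to $\mathfrak{A}_a$ by Lemma~\ref{pi}(ii). It commutes with $a$ because $ax=xa$ gives $ax^*=x^*a$, so that $ax^{\sharp_a}=x^*a=ax^*$.

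Adding the two estimates and using linearity of $f$ gives $|f(ax)|^2+f(ax^{\sharp_a}x)^2\leq f(aw)$, where $w$ is the sum of the four terms inside the norm in the statement. We want this to equal $f(w^*aw)^{1/2}$ up to a harmless estimate. Each of the four summands is a continuous function of an $a$-commuting positive element, so $w$ is self-adjoint, $w\geq 0$ and $aw=wa$. Then Lemma~\ref{c-s_inq} with first argument $\mathbf{1}$ and second argument $w$ gives
\[
f(aw)\leq \sqrt{f(a)}\sqrt{f(w^*aw)}=\sqrt{f(w^*aw)}\leq \|w\|_a.
\]
Taking the supremum over $f\in\mathfrak{S}_a(\mathfrak{A})$ then yields $dv_a^2(x)\leq\|w\|_a$.

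The main obstacle is verifying that $x^{\sharp_a}$ and $x^{\sharp_a}x$ commute with $a$, so that Lemma~\ref{cts} and Lemma~\ref{lem1} apply to them. From $ax^{\sharp_a}=x^*a=ax^*$ we only get $a(x^{\sharp_a}-x^*)=0$, not $x^{\sharp_a}=x^*$. I would handle this by working in the representation, using $\pi(x)^{\sharp_A}=\pi(x^{\sharp_a})$ (Lemma~\ref{pi}(iii)) and the fact that all quantities only see $A$-seminorms, so the components in the kernel of $A$ are invisible. If that becomes delicate, I would instead choose the $a$-adjoint $x^{\sharp_a}=x^*$, which is legitimate because $ax=xa$; then every element involved commutes with $a$ directly.
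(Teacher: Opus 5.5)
Your route is the same as the paper's. You run the chain of Theorem~\ref{thm.4.15} (Lemma~\ref{lem1}, Power--Young, Lemma~\ref{sp}) with $\phi(t)=t$, once for $x$ and once for $x^{\sharp_a}x$, add the two bounds, and then bound $f(aw)$. Your last step is better than the paper's. The paper writes $f(aw)=\|w\|_a$ as an equality, but only $f(aw)\leq\|w\|_a$ is true, and your use of Lemma~\ref{c-s_inq} with first argument $\mathbf{1}$ proves exactly that.

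\textbf{The gap: commutation of $x^{\sharp_a}x$ with $a$.} Your claim that $x^{\sharp_a}x$ commutes with $a$ is false as argued. From $ax^{\sharp_a}=x^*a$ you only get $ax^{\sharp_a}x=x^*xa$; nothing controls $x^{\sharp_a}a$. Counterexample: in $M_2(\mathbb{C})$ with matrix units $e_{ij}$, take $a=e_{11}$, $x=\mathbf{1}$ and $x^{\sharp_a}=e_{11}+e_{21}$. Then $ax^{\sharp_a}=e_{11}=x^*a$, yet $x^{\sharp_a}xa=e_{11}+e_{21}\neq e_{11}=ax^{\sharp_a}x$. So for an arbitrary $a$-adjoint you cannot invoke Lemma~\ref{lem1} for $x^{\sharp_a}x$, nor Lemma~\ref{cts} for $x^{\sharp_a}$. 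The paper's ``Similarly'' skips the same point.

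\textbf{The repair.} Your fallback $x^{\sharp_a}=x^*$ is the right fix, but ``legitimate because $ax=xa$'' only shows that $x^*$ is an $a$-adjoint. You must also show that the inequality for this one choice gives it for every choice. This is a short check using $ax^{\sharp_a}=x^*a$, $ax=xa$, $ax^*=x^*a$ and $a(x^{\sharp_a})^{\sharp_a}=(x^{\sharp_a})^*a=ax$:
\begin{itemize}
\item $ax^{\sharp_a}x=ax^*x$, so $f(ax^{\sharp_a}x)=f(ax^*x)$ and $|x|_a^2=ax^*x$;
\item $|x^{\sharp_a}|_a^2=axx^{\sharp_a}=xx^*a=axx^*$;
\item by Lemma~\ref{pi}(iv), $|x^{\sharp_a}x|_a^2=a(x^{\sharp_a}x)^2=x^*xx^*xa=a(x^*x)^2$.
\end{itemize}
Hence both sides of the corollary do not depend on which $a$-adjoint is chosen. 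Moreover, all these positive elements commute with $a$, which gives the positivity and membership in $\mathfrak{A}_a$ you need for Lemma~\ref{sp}.

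Equivalently, write $f(ax^{\sharp_a}x)=f(ax^*x)$ and apply Lemma~\ref{lem1} directly to $x^*x$. This element lies in $\mathfrak{A}_a$, commutes with $a$, and is its own $a$-adjoint. Its $|\cdot|_a$ is $(a(x^*x)^2)^{1/2}=|x^{\sharp_a}x|_a$.

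With this remark added, your argument is complete. The vague ``kernel of $A$'' alternative is not needed.
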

\begin{proof}
    Taking into account $\phi(t)=t$, $t\geq0$ in the last inequality of Theorem~\ref{thm.4.15}, we get
    \begin{eqnarray*}
        |f(ax)|^2+f(ax^{\sharp_a}x)^2
            &\leq& f\left(a\left(\frac{1}{\alpha_1}f_1^{2\alpha_1}(|x|_a)+\frac{1}{\beta_1}g_1^{2\beta_1}(|x^{\sharp_a}|_a)+\frac{1}{\alpha_2}f_2^{2\alpha_2}(|x^{\sharp_a}x|_a)+\frac{1}{\beta_2}g_2^{2\beta_2}(|x^{\sharp_a}x|_a)\right)\right)\\
            &=&\left\|\frac{1}{\alpha_1}f_1^{2\alpha_1}(|x|_a)+\frac{1}{\beta_1}g_1^{2\beta_1}(|x^{\sharp_a}|_a)+\frac{1}{\alpha_2}f_2^{2\alpha_2}(|x^{\sharp_a}x|_a)+\frac{1}{\beta_2}g_2^{2\beta_2}(|x^{\sharp_a}x|_a)\right\|_a.
    \end{eqnarray*}
    Taking the supremum over $f\in\mathfrak{S}_a(\mathfrak{A})$, the result follows.
\end{proof}
\begin{remark}
    If we choose $a=\mathbf{1}$ and $\mathfrak{A}=\mathscr{B}(\mathscr{H})$, then Corollary~\ref{new thm_1} becomes \cite[Theorem~2.3]{bhunia2021bounds}.
\end{remark}

Next, we prove the following theorem using Lemma~\ref{lem1}.
\begin{theorem}
		  Let $x\in \mathfrak{A}_a$ with $ax=xa$, $x^{\sharp_a}$ be an $a$-adjoint of $x$. Suppose $f_1,g_1$ are two non-negative continuous functions on $[0, \infty)$ such that $f_1(t)g_1(t)=t$, for all $t\in[0, \infty)$. Then for an Orlicz function $\phi$ and any $\alpha\in[0, 1]$,
		\[
			\phi(dv_a^2(x))\leq \frac{\alpha}{2}\phi(v_a(q))+\frac{1-\alpha}{2}\phi(v_a(s))+\frac{1}{2}\phi(v_a(f_1^2(|x^{\sharp_a}x|_a^2)+g_1^2(|x^{\sharp_a}x|_a^2)))
		,\]
			where $q=f_1^2(|x^{\sharp_a}x|_a)+g_1^2(|x^{\sharp_a}x|_a)$ and $s=f_1^2(|xx^{\sharp_a}|_a)+g_1^2(|xx^{\sharp_a}|_a)$.
\end{theorem}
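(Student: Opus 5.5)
Fix $f\in\mathfrak{S}_a(\mathfrak{A})$ and bound the two summands $|f(ax)|^2$ and $f(ax^{\sharp_a}x)^2$ separately, in each case by $f$ applied to ($a$ times) a sum $f_1^2(\cdot)+g_1^2(\cdot)$. Then use the Hermite--Hadamard argument from the earlier theorems to split $\phi$ of the sum into half-weights. Finally, take the supremum over $f$.

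\emph{Step 1 (first summand).} Apply Lemma~\ref{lem1} to $x$ (legitimate because $ax=xa$):
\[
|f(ax)|^2\leq f(af_1^2(|x|_a))\,f(ag_1^2(|x^{\sharp_a}|_a)).
\]
By the arithmetic--geometric mean inequality this is at most $\tfrac12\big(f(af_1^2(\cdot))+f(ag_1^2(\cdot))\big)$. To reach the mixed form with parameter $\alpha$, I will first bound $|f(ax)|^2$ from Lemma~\ref{nl1} via the AM--GM inequality, using the two forms $\sqrt{u}\sqrt{v}\le u$ or $\le v$ according to which is larger. Alternatively, I would take the convex combination
\[
|f(ax)|^2\le \alpha\,(\text{bound A})+(1-\alpha)(\text{bound B}),
\]
where bound A is obtained by applying Lemma~\ref{lem1} to the pair $(x^{\sharp_a}x,\cdot)$ and bound B by applying it to $(xx^{\sharp_a},\cdot)$. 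Together with $|f(ax)|^2\le f(ax^{\sharp_a}x)$ from \eqref{cons1} and $|f(ax^{\sharp_a})|^2\le f(axx^{\sharp_a})$ from \eqref{cons3}, these give:
\begin{itemize}
\item $f(ax^{\sharp_a}x)\le f(aq)$, by Lemma~\ref{lem1} for the $a$-positive element $x^{\sharp_a}x$ together with $\sqrt{uv}\le u+v$;
\item $f(axx^{\sharp_a})\le f(as)$, in the same way.
\end{itemize}
Here I need $|f(ax)|^2\le$ both $f(ax^{\sharp_a}x)$ and $f(axx^{\sharp_a})$. The first is \eqref{cons1}. For the second I use $|f(ax)|=|f(ax^{\sharp_a})|$ and \eqref{cons3}. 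Hence $|f(ax)|^2\le \alpha|f(aq)|+(1-\alpha)|f(as)|$.

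\emph{Step 2 (second summand).} Write $f(ax^{\sharp_a}x)^2\le f(a(x^{\sharp_a}x)^2)$ (Lemma~\ref{sp}, or Lemma~\ref{pi}(iv) with \eqref{cons1}). Since $a$ commutes with $x^{\sharp_a}x$, the expression $a(x^{\sharp_a}x)^2$ equals $a|x^{\sharp_a}x|_a^2$ up to the $a$-absolute-value convention. Factoring $t=f_1(t)g_1(t)$ at $t=|x^{\sharp_a}x|_a^2$ and using $f_1g_1\le \tfrac12(f_1^2+g_1^2)\le f_1^2+g_1^2$ then gives
\[
f(ax^{\sharp_a}x)^2\le \big|f\big(a(f_1^2(|x^{\sharp_a}x|_a^2)+g_1^2(|x^{\sharp_a}x|_a^2))\big)\big|.
\]
This step is where I expect the main obstacle: checking that the functional calculus interacts correctly with $a$. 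I would use Lemma~\ref{cts} to move $a$ freely, and positivity of $f$ on the commuting positive element.

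\emph{Step 3.} Add the two bounds. Rewrite the total as $\tfrac12\big[2(\alpha|f(aq)|+(1-\alpha)|f(as)|)\big]+\tfrac12\big[2|f(ap)|\big]$, where $p=f_1^2(|x^{\sharp_a}x|_a^2)+g_1^2(|x^{\sharp_a}x|_a^2)$. To get exactly the stated coefficients without factors of $2$, I would instead keep the sum as it stands and apply monotonicity and convexity of $\phi$ directly to the three-term convex-type combination with weights $\tfrac\alpha2,\tfrac{1-\alpha}2,\tfrac12$. This works after absorbing the halves via the $\tfrac12$ already present from the AM--GM steps. Then use Jensen for $\phi$ with weights $\alpha,1-\alpha$ on the first bracket, bound each $|f(a\cdot)|$ by $v_a(\cdot)$, and take the supremum over $f$ to conclude. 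The bookkeeping of these factors of $2$ is the delicate point. If it fails, I would sharpen Step 1 using $\sqrt{uv}\le\tfrac12(u+v)$ to recover the needed halves.
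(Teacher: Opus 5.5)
Your plan is in substance the paper's proof. You write $|f(ax)|^2=\alpha|f(ax)|^2+(1-\alpha)|f(ax^{\sharp_a})|^2$ and bound the two pieces by $f(ax^{\sharp_a}x)$ and $f(axx^{\sharp_a})$ via \eqref{cons1} and \eqref{cons3}. You bound $f(ax^{\sharp_a}x)^2\le f(a(x^{\sharp_a}x)^2)$, split all three quantities through $f_1,g_1$, and finish with convexity and monotonicity of $\phi$. Your three-term Jensen with weights $\frac{\alpha}{2},\frac{1-\alpha}{2},\frac12$ is equivalent to the paper's Hermite--Hadamard step followed by convexity in $\alpha$. The opening attempts in Step~1 (Lemma~\ref{lem1} applied to $x$ itself, or $\sqrt{uv}\le\max\{u,v\}$) play no role.

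However, the factor of $2$ you worry about is a real defect of the write-up. With the inequalities you actually display, $\sqrt{uv}\le u+v$ in Step~1 and $\frac12(f_1^2+g_1^2)\le f_1^2+g_1^2$ in Step~2, you only obtain
\[
|f(ax)|^2+f(ax^{\sharp_a}x)^2\le \alpha f(aq)+(1-\alpha)f(as)+f(ap),\qquad p=f_1^2(|x^{\sharp_a}x|_a^2)+g_1^2(|x^{\sharp_a}x|_a^2).
\]
Its coefficients sum to $2$, so convexity then yields only terms like $\frac{\alpha}{2}\phi(2f(aq))$, which is weaker than the statement. You must use $\sqrt{uv}\le\frac12(u+v)$ in Step~1 and also keep the $\frac12$ in Step~2; your closing sentence mentions only Step~1. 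This gives $\frac{\alpha}{2}f(aq)+\frac{1-\alpha}{2}f(as)+\frac12f(ap)$, exactly the paper's intermediate bound, and your Jensen step then closes the argument. The ``rewrite as $\frac12[2(\cdots)]$'' variant should simply be dropped.

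The only genuine departure is the third term. The paper applies Lemma~\ref{lem1} once more and then AM--GM. You instead lift the scalar inequality $t=f_1(t)g_1(t)\le\frac12\bigl(f_1^2(t)+g_1^2(t)\bigr)$ by functional calculus and use positivity of $f$. This is more elementary and builds in the correct $\frac12$ automatically.

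Be aware that the identification you wave at is not literal under the paper's definition $|y|_a^2=ay^{\sharp_a}y$. Since $x^{\sharp_a}x$ is $a$-self-adjoint, one may take it as its own $a$-adjoint, so $|x^{\sharp_a}x|_a^2=a(x^{\sharp_a}x)^2$ already. Your route then produces $f\bigl(f_1^2(T)+g_1^2(T)\bigr)$ with $T=|x^{\sharp_a}x|_a^2$ and no $a$ in front.

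To get the stated $f\bigl(a(\cdots)\bigr)$ you need the convention $|y|_a=(y^{\sharp_a}y)^{1/2}$ together with $a$ commuting with $x^{\sharp_a}x$. That commutation does not follow from Lemma~\ref{cts}, which is about $|x|_a$. Nor does it follow from $ax=xa$ alone when $a$ is not invertible. The paper's application of Lemma~\ref{lem1} to $x^{\sharp_a}x$ and $(x^{\sharp_a}x)^2$ tacitly needs the same commutation and convention. So this is an imprecision shared with the source rather than a flaw specific to your route.
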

\begin{proof}Let $x\in \mathfrak{A}_a$ with $ax=xa$, $x^{\sharp_a}$ be an $a$-adjoint of $x$ and $f\in \mathfrak{S}_a(\mathfrak{A})$. For any $\alpha\in[0, 1]$, we have
	\begin{eqnarray*}
			&& |f(ax)|^2+f(ax^{\sharp_a}x)^2 \\
			& =&\alpha|f(ax)|^2+(1-\alpha)|f(ax^{\sharp_a})|^2+f(ax^{\sharp_a}x)^2 \quad\text{(since $|f(ax)|=|f(ax^{\sharp_a})|)$} \\
            &\leq& \alpha f(ax^{\sharp_a}x)+(1-\alpha)f(axx^{\sharp_a})+f(a(x^{\sharp_a}x)^2)\quad\text{(by inequalities~\eqref{cons1}, \eqref{cons3} and Lemma~\ref{nl1})}\\
			& \leq& \alpha\left[\sqrt{f(af_1^2(|x^{\sharp_a}x|_a))}\sqrt{f(ag_1^2(|x^{\sharp_a}x|_a))}\right]+(1-\alpha)\left[\sqrt{f(af_1^2(|xx^{\sharp_a}|_a))}\sqrt{fa(g_1^2(|xx^{\sharp_a}|_a))}\right]\\
			&& +\sqrt{f(af_1^2(|x^{\sharp_a}x|_a^2))}\sqrt{f(ag_1^2(|x^{\sharp_a}x|_a^2))} \quad\text{(by Lemma~\ref{lem1})} \\
			& \leq& \frac{\alpha}{2}\left(f(af_1^2(|x^{\sharp_a}x|_a))+f(ag_1^2(|x^{\sharp_a}x|_a))\right)+\frac{1-\alpha}{2}\left(f(af_1^2(|xx^{\sharp_a}|_a))+f(ag_1^2(|xx^{\sharp_a}|_a))\right)\\
			&& +\frac{1}{2}\left(f(af_1^2(|x^{\sharp_a}x|_a^2))+f(ag_1^2(|x^{\sharp_a}x|_a^2))\right) \\
			& =& \frac{1}{2}f(a\alpha q+a(1-\alpha)s)+\frac{1}{2}\left(f(af_1^2(|x^{\sharp_a}x|_a^2))+f(ag_1^2(|x^{\sharp_a}x|_a^2))\right),
	\end{eqnarray*}
	where $q=f_1^2(|x^{\sharp_a}x|_a)+g_1^2(|x^{\sharp_a}x|_a)$ and $s=f_1^2(|xx^{\sharp_a}|_a)+g_1^2(|xx^{\sharp_a}|_a)$. From the non-decreasing and convex properties of $\phi$, we get
	\begin{eqnarray}\label{4.3}
			 &&\phi(|f(ax)|^2+f(ax^{\sharp_a}x)^2) \nonumber\\
			& \leq& \displaystyle\int_{0}^{1}\phi\left(t(f(a(\alpha q+(1-\alpha)s)))+(1-t)\left(f(a(f_1^2(|x^{\sharp_a}x|_a^2)+g_1^2(|x^{\sharp_a}x|_a^2))\right)\right)\, dt \nonumber\\
			& \leq &\frac{1}{2}\phi\left(f(a(\alpha q+(1-\alpha)s))\right)+\frac{1}{2}\phi\left(f(a(f_1^2(|x^{\sharp_a}x|_a^2)+g_1^2(|x^{\sharp_a}x|_a^2)))\right)\nonumber\\
            &\leq& \frac{\alpha}{2}\phi(f(a(q)))+\frac{1-\alpha}{2}\phi(f(a(s)))+\frac{1}{2}\phi\left(f(a(f_1^2(|x^{\sharp_a}x|_a^2)+g_1^2(|x^{\sharp_a}x|_a^2)))\right).
	\end{eqnarray}
		Taking the supremum over $f\in \mathfrak{S}_a(\mathfrak{A})$, we get the required result.
		\end{proof}
        \begin{corollary}\label{cor.4.19}
            	  Let $x\in \mathfrak{A}_a$ with $ax=xa$, $x^{\sharp_a}$ be an $a$-adjoint of $x$. Suppose $f_1,g_1$ are two non-negative continuous functions on $[0, \infty)$ such that $f_1(t)g_1(t)=t$, for all $t\in[0, \infty)$. Then for any $\alpha\in[0,1]$,
                \[dv_a^2(x)\leq\frac{1}{2}\left\|\alpha q+(1-\alpha)s+f_1^2(|x^{\sharp_a}x|_a^2)+g_1^2(|x^{\sharp_a}x|_a^2)\right\|_a,\] where $q=f_1^2(|x^{\sharp_a}x|_a)+g_1^2(|x^{\sharp_a}x|_a)$ and $s=f_1^2(|xx^{\sharp_a}|_a)+g_1^2(|xx^{\sharp_a}|_a)$.
        \end{corollary}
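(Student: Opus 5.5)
This is the linear case $\phi(t)=t$ of the preceding theorem. We do not pass through the final convexity split in \eqref{4.3}; instead we stop at the pointwise estimate obtained before $\phi$ is applied.

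Fix $f\in\mathfrak{S}_a(\mathfrak{A})$ and $\alpha\in[0,1]$. The proof of the preceding theorem combines $|f(ax)|=|f(ax^{\sharp_a})|$, inequalities \eqref{cons1}, \eqref{cons3}, Lemma~\ref{nl1}, Lemma~\ref{lem1}, and the arithmetic--geometric mean inequality to give
\[
|f(ax)|^2+f(ax^{\sharp_a}x)^2\leq \frac{1}{2}f\bigl(a(\alpha q+(1-\alpha)s)\bigr)+\frac{1}{2}f\bigl(a(f_1^2(|x^{\sharp_a}x|_a^2)+g_1^2(|x^{\sharp_a}x|_a^2))\bigr).
\]
By linearity of $f$, the right-hand side equals $\frac12 f(aw)$, where
\[
w=\alpha q+(1-\alpha)s+f_1^2(|x^{\sharp_a}x|_a^2)+g_1^2(|x^{\sharp_a}x|_a^2).
\]
Taking $\phi(t)=t$ in \eqref{4.3} yields exactly this same bound.

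Next we bound $f(aw)$ by $\|w\|_a$. Each summand of $w$ is a continuous function of a positive element, and it commutes with $a$ by Lemma~\ref{cts}. Since $f_1^2,g_1^2\ge0$, each summand is positive, so $w$ is positive and $aw=w^{1/2}aw^{1/2}\ge0$. Hence $f(aw)=|f(aw)|\le v_a(w)$. We then use $v_a(w)\le\|w\|_a$, which follows from the Cauchy--Schwarz consequence \eqref{cons1}: $|f(aw)|^2\le f(aw^{\sharp_a}w)\le\|w\|_a^2$. This is the same passage the paper makes in Corollary~\ref{new thm_1}.

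Taking the supremum over $f\in\mathfrak{S}_a(\mathfrak{A})$ then gives $dv_a^2(x)\le\frac12\|w\|_a$, which is the statement.

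The main obstacle is justifying $f(aw)\le\|w\|_a$, specifically that $w\in\mathfrak{A}_a$ so that \eqref{cons1} applies. Because $a$ commutes with $w$ and $w$ is self-adjoint, $w^{\sharp_a}=w$ is an $a$-adjoint ($aw=w^*a$), and this settles it. The rest is direct bookkeeping from the preceding proof.
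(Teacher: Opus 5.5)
Your route is the same as the paper's: specialize the pointwise estimate behind \eqref{4.3} to $\phi(t)=t$, bound $\frac12 f(aw)$ by $\frac12\|w\|_a$, and take the supremum. The gap is that the pointwise estimate you import is false. In fact the statement itself is false, so the paper's proof breaks at the same place.

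Here is a counterexample. Take $a=c\mathbf{1}$ with $0<c<1$, $x=\mathbf{1}$ (so $x^{\sharp_a}=\mathbf{1}$), and $f_1(t)=g_1(t)=\sqrt{t}$. Every $f\in\mathfrak{S}_a(\mathfrak{A})$ satisfies $f(a)=1$. Hence $dv_a^2(\mathbf{1})=2$ and $\|\lambda\mathbf{1}\|_a=|\lambda|$ for scalars $\lambda$. On the other hand, $|\mathbf{1}|_a=a^{1/2}=\sqrt{c}\,\mathbf{1}$ and $|\mathbf{1}|_a^2=c\mathbf{1}$. So $q=s=2\sqrt{c}\,\mathbf{1}$ and $w=(2\sqrt{c}+2c)\mathbf{1}$, and the right-hand side equals $\sqrt{c}+c$ for every $\alpha$. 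For $c=\tfrac14$ this is $\tfrac34<2$.

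The step that breaks is the use of Lemma~\ref{lem1} inside the chain. Applied to $x^{\sharp_a}x=\mathbf{1}$, it would assert $1=f(ax^{\sharp_a}x)\le\sqrt{f(a|\mathbf{1}|_a)}\sqrt{f(a|\mathbf{1}|_a)}=\sqrt{c}$.

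The defect is the normalization $|x|_a=(ax^{\sharp_a}x)^{1/2}$. Take $A=cI$ and $T=I$. The mixed Schwarz inequality of Lemma~\ref{lemma2} holds with the ordinary modulus $|T|=I$, but fails with $\pi(|\mathbf{1}|_a)=\sqrt{c}\,I$. So the identification $|\pi(x)|_A=\pi(|x|_a)$ in the proof of Lemma~\ref{lem1} inserts a spurious factor of $a$, which is then counted a second time when $f(a\,\cdot\,)$ is evaluated.

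Your argument does go through if the three moduli are replaced by $x^*x$, $xx^*$ and $(x^*x)^2$. Use the admissible choice $x^{\sharp_a}=x^*$, which is an $a$-adjoint because $ax=xa$. For a positive $y$ commuting with $a$, apply Cauchy--Schwarz for $f$ to $u=f_1(y)a^{1/2}$ and $v=g_1(y)a^{1/2}$:
\[
f(ay)=f(u^*v)\le\sqrt{f(af_1^2(y))}\sqrt{f(ag_1^2(y))}\le\tfrac12 f\bigl(a(f_1^2(y)+g_1^2(y))\bigr).
\]
Then \eqref{cons1}, \eqref{cons3} and $f(ax^*x)^2\le f(a(x^*x)^2)$ supply the rest.

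Incidentally, the step you flagged as the main obstacle is not one. Lemma~\ref{c-s_inq} with first argument $\mathbf{1}$ gives $|f(aw)|^2\le f(w^*aw)\le\|w\|_a^2$ for every $w\in\mathfrak{A}$. No $a$-adjoint and no commutation with $a$ is needed.
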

        \begin {proof}
        From inequality~\eqref{4.3} choosing $\phi(t)=t$, $t\geq0$, we get
        \begin{eqnarray*}
            |f(ax)|^2+f(ax^{\sharp_a}x)^2&\leq&\frac{1}{2}f\left(a\left(\alpha q+(1-\alpha)s+f_1^2(|x^{\sharp_a}x|_a^2)+g_1^2(|x^{\sharp_a}x|_a^2)\right)\right)\\
            &=&\frac{1}{2}\left\|\alpha q+(1-\alpha)s+f_1^2(|x^{\sharp_a}x|_a^2)+g_1^2(|x^{\sharp_a}x|_a^2)\right\|_a,
        \end{eqnarray*}where $q=f_1^2(|x^{\sharp_a}x|_a)+g_1^2(|x^{\sharp_a}x|_a)$ and $s=f_1^2(|xx^{\sharp_a}|_a)+g_1^2(|xx^{\sharp_a}|_a)$. Taking the supremum over $f\in\mathfrak{S}_a(\mathfrak{A})$, we obtain the desired inequality.
        \end{proof}
 We now derive the following theorem.
\begin{theorem}
		Let $x\in \mathfrak{A}_a$ with $ax=xa$, $x^{\sharp_a}$ be an $a$-adjoint of $x$. Suppose $f_1, g_1$ are two non-negative continuous functions on $[0, \infty)$ such that $f_1(t)g_1(t)=t$, for all $t\in[0, \infty)$. If $\phi$ is an Orlicz function and $\alpha\in [0,1]$, then
		\begin{eqnarray*}
				\phi(dv_a^2(x))
				& \leq& \frac{\alpha}{2}\left[\phi\left(v_a\left(\frac{(f_1^2(|x|_a)+g_1^2(|x^{\sharp_a}|_a))^2}{2}\right)\right)+\phi\left(v_a\left(\frac{(f_1^2(|x^{\sharp_a}x|_a)+g_1^2(|x^{\sharp_a}x|_a))^2}{2}\right)\right)\right]\\
				&& +(1-\alpha)\phi\left(v_a(xx^{\sharp_a}+(x^{\sharp_a}x)^2)\right).
		\end{eqnarray*}
\end{theorem}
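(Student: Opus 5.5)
We fix $f\in\mathfrak{S}_a(\mathfrak{A})$ and first bound the quantity $S:=|f(ax)|^2+f(ax^{\sharp_a}x)^2$ in two different ways. Then we take the convex combination $S=\alpha S+(1-\alpha)S$. Because $\phi$ is non-decreasing and convex, this gives $\phi(S)\leq\alpha\phi(\text{first bound})+(1-\alpha)\phi(\text{second bound})$. Taking the supremum over $f$ at the end yields the statement.

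\emph{First bound.} By Lemma~\ref{lem1}, applied to $x$ and to $x^{\sharp_a}x$ (the latter commutes with $a$ since $ax=xa$ and $ax^{\sharp_a}=x^*a$ imply $ax^{\sharp_a}x=x^*xa$, so $|x^{\sharp_a}x|_a$ makes sense and Lemma~\ref{cts} applies), we have
\[
|f(ax)|\leq\sqrt{f(af_1^2(|x|_a))f(ag_1^2(|x^{\sharp_a}|_a))}\leq\tfrac12 f\bigl(a(f_1^2(|x|_a)+g_1^2(|x^{\sharp_a}|_a))\bigr)
\]
by AM--GM, and similarly for $f(ax^{\sharp_a}x)$ with $|x^{\sharp_a}x|_a$ in both slots. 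Squaring and using Lemma~\ref{sp} with $r=2$ gives
\[
|f(ax)|^2\leq \tfrac14 f\bigl(a u^2\bigr),\qquad u=f_1^2(|x|_a)+g_1^2(|x^{\sharp_a}|_a).
\]
Lemma~\ref{sp} applies because $u$ is positive and commutes with $a$. The analogous bound holds with $w=f_1^2(|x^{\sharp_a}x|_a)+g_1^2(|x^{\sharp_a}x|_a)$. Hence $S\leq \tfrac12 f(a\,u^2/2)+\tfrac12 f(a\,w^2/2)$, and convexity of $\phi$ gives
\[
\phi(S)\leq \tfrac12\phi\bigl(v_a(u^2/2)\bigr)+\tfrac12\phi\bigl(v_a(w^2/2)\bigr).
\]

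\emph{Second bound.} By inequality~\eqref{cons3} together with $|f(ax)|=|f(ax^{\sharp_a})|$, we get $|f(ax)|^2\leq f(axx^{\sharp_a})$. By inequality~\eqref{cons1} with $y=x^{\sharp_a}x$ and Lemma~\ref{pi}(iv), we get $f(ax^{\sharp_a}x)^2\leq f(a(x^{\sharp_a}x)^2)$. Thus $S\leq f(a(xx^{\sharp_a}+(x^{\sharp_a}x)^2))\leq v_a(xx^{\sharp_a}+(x^{\sharp_a}x)^2)$.

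Combining the two bounds gives the statement for each $f$; then we take the supremum over $f$.

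The main obstacle is justifying the application of Lemma~\ref{sp} in the first bound. It requires that $u$ and $w$ lie in $\mathfrak{A}_a$ and are positive, and that $f(au)^2\leq f(au^2)$. This should follow because each of $u$ and $w$ is a continuous function of an element commuting with $a$ (Lemma~\ref{cts}), hence is itself positive and commutes with $a$, which in turn makes it $a$-self-adjoint. Once this is settled, the rest is routine: apply AM--GM, then convexity of $\phi$, then monotonicity of $\phi$.
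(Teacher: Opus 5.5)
Your proposal is correct in substance and is essentially the paper's proof. Both use the split $S=\alpha S+(1-\alpha)S$, control the $\alpha$-part by Lemma~\ref{lem1}, AM--GM and Lemma~\ref{sp} with $r=2$, and control the $(1-\alpha)$-part by \eqref{cons3} together with $f(ax^{\sharp_a}x)^2\leq f(a(x^{\sharp_a}x)^2)$ (you get this from \eqref{cons1}, the paper from Lemma~\ref{nl1}, both combined with Lemma~\ref{pi}(iv)), finishing with monotonicity and convexity of $\phi$ and a supremum over $f$. One justification needs fixing: $ax^{\sharp_a}x=x^*xa$ does not show that $x^{\sharp_a}x$ commutes with $a$, and this can fail (e.g.\ $a=\mathrm{diag}(1,0)$, $x=\mathbf{1}$, $x^{\sharp_a}=\left(\begin{smallmatrix}1&0\\1&0\end{smallmatrix}\right)$), so instead apply Lemma~\ref{lem1} to $x^*x$, which commutes with $a$, is its own $a$-adjoint, and satisfies $f(ax^*x)=f(ax^{\sharp_a}x)$ and $|x^*x|_a=|x^{\sharp_a}x|_a$ (both squares equal $a(x^*x)^2$) --- a point the paper's proof also passes over silently.
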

\begin{proof} Let $x\in \mathfrak{A}_a$ with $ax=xa$, $x^{\sharp_a}$ be an $a$-adjoint of $x$ and $f\in \mathfrak{S}_a(\mathfrak{A})$. For any $\alpha\in [0, 1]$, we have
	\begin{eqnarray*}
		&& |f(ax)|^2+f(ax^{\sharp_a}x)^2 \\
		& =	&\alpha|f(ax)|^2+(1-\alpha)|f(ax)|^2+\alpha f(ax^{\sharp_a}x)^2+(1-\alpha)f(ax^{\sharp_a}x)^2 \\
		& \leq& \alpha\left[f(af_1^2(|x|_a))f(ag_1^2(|x^{\sharp_a}|_a))+ f(af_1^2(|x^{\sharp_a}x|_a))f(g_1^2(|x^{\sharp_a}x|_a))\right] 
		 +(1-\alpha)\left(|f(ax^{\sharp_a})|^2+f(ax^{\sharp_a}x)^2\right)\\ 
		 &&\text{(by Lemma~\ref{lem1} and also $|f(ax)|=|f(ax^{\sharp_a})|$)} \\
		& \leq& \alpha\left[f(af_1^2(|x|_a))f(ag_1^2(|x^{\sharp_a}|_a))+ f(af_1^2(|x^{\sharp_a}x|_a))f(g_1^2(|x^{\sharp_a}x|_a))\right] 
		 +(1-\alpha)\left(f(axx^{\sharp_a})+f(a(x^{\sharp_a}x)^2)\right)\\
         &&\quad\text{(using inequality~\eqref{cons3} and Lemma~\ref{nl1})} \\
		& \leq& \alpha\left[f\left(a\left(\frac{f_1^2(|x|_a)+g_1^2(|x^{\sharp_a}|_a)}{2}\right)^2\right)+ f\left(a\left(\frac{f_1^2(|x^{\sharp_a}x|_a)+g_1^2(|x^{\sharp_a}x|_a)}{2}\right)^2\right)\right]
        +(1-\alpha)f(a(xx^{\sharp_a}+(x^{\sharp_a}x)^2)). 
	\end{eqnarray*}
	As $\phi$ is non-decreasing and convex, we get
	\begin{eqnarray}\label{thm.4.19}
			&& \phi(|f(ax)|^2+f(ax^{\sharp_a}x)^2)\nonumber \\
			& \leq& \frac{\alpha}{2}\phi\left(f\left(a\left(\frac{(f_1^2(|x|_a)+g_1^2(|x^{\sharp_a}|_a))^2}{2}\right)\right)+f\left(a\left(\frac{(f_1^2(|x^{\sharp_a}x|_a)+g_1^2(|x^{\sharp_a}x|_a))^2}{2}\right)\right)\right)\nonumber\\ 
			&&+(1-\alpha)\phi\left(f(a(xx^{\sharp_a}+(x^{\sharp_a}x)^2))\right) \nonumber\\
            &\leq& \frac{\alpha}{2}\left[\phi\left(f\left(a\left(\frac{(f_1^2(|x|_a)+g_1^2(|x^{\sharp_a}|_a))^2}{2}\right)\right)\right)+\phi\left(f\left(a\left(\frac{(f_1^2(|x^{\sharp_a}x|_a)+g_1^2(|x^{\sharp_a}x|_a))^2}{2}\right)\right)\right)\right]\nonumber\\
            &&+(1-\alpha)\phi\left(f(a(xx^{\sharp_a}+(x^{\sharp_a}x)^2))\right).
	\end{eqnarray}
		Taking the supremum over $f\in \mathfrak{S}_a(\mathfrak{A})$ of the above inequality, the required result holds.
\end{proof}
\begin{corollary}\label{cor.4.19}
    Let $x\in \mathfrak{A}_a$ with $ax=xa$, $x^{\sharp_a}$ be an $a$-adjoint of $x$. Suppose $f_1, g_1$ are two non-negative continuous functions on $[0, \infty)$ such that $f_1(t)g_1(t)=t$, for all $t\in[0, \infty)$. Then for any $\alpha\in [0,1]$,
    \[dv_a^2(x)\leq\left\|\alpha(q^2+s^2)+(1-\alpha)(xx^{\sharp_a}+(x^{\sharp_a}x)^2)\right\|_a,\] where $q=\frac{1}{2}(f_1^2(|x|_a)+g_1^2(|x^{\sharp_a}|_a))$ and $s=\frac{1}{2}(f_1^2(|x^{\sharp_a}x|_a)+g_1^2(|x^{\sharp_a}x|_a))$.
\end{corollary}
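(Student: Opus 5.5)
This corollary should follow from the preceding theorem by specializing to $\phi(t)=t$, exactly as the earlier corollaries were obtained. The cleanest route is to start from the pointwise (in $f$) chain of inequalities in that proof, before any supremum is taken. For a fixed $f\in\mathfrak{S}_a(\mathfrak{A})$ and $\alpha\in[0,1]$, the proof already gives
\[
|f(ax)|^2+f(ax^{\sharp_a}x)^2\leq \alpha\left[f(aq^2)+f(as^2)\right]+(1-\alpha)f\left(a(xx^{\sharp_a}+(x^{\sharp_a}x)^2)\right),
\]
with $q=\frac12(f_1^2(|x|_a)+g_1^2(|x^{\sharp_a}|_a))$ and $s=\frac12(f_1^2(|x^{\sharp_a}x|_a)+g_1^2(|x^{\sharp_a}x|_a))$. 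This bound comes from three ingredients used in that proof:
\begin{enumerate}
\item[(a)] Lemma~\ref{lem1} applied to $x$ and to $x^{\sharp_a}x$;
\item[(b)] the arithmetic--geometric mean inequality $uv\leq(\frac{u+v}{2})^2$, followed by Lemma~\ref{sp} with $r=2$, which turns $(f(a\,\cdot))^2$ into $f(a\,(\cdot)^2)$;
\item[(c)] inequality~\eqref{cons3} together with Lemma~\ref{nl1}, for the $(1-\alpha)$ part.
\end{enumerate}

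Setting $\phi(t)=t$ in \eqref{thm.4.19}, or simply reading the chain above, gives the linear bound directly, with no Hermite--Hadamard step. By linearity of $f$ we then merge everything into a single expression,
\[
f\left(a\left(\alpha(q^2+s^2)+(1-\alpha)(xx^{\sharp_a}+(x^{\sharp_a}x)^2)\right)\right).
\]

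To finish, we bound this by $\|\cdot\|_a$ of the element inside. The element $w=\alpha(q^2+s^2)+(1-\alpha)(xx^{\sharp_a}+(x^{\sharp_a}x)^2)$ is $a$-positive: $q,s$ commute with $a$ by Lemma~\ref{cts}, and $xx^{\sharp_a}$, $(x^{\sharp_a}x)^2$ are $a$-positive. Hence $f(aw)=|f(aw)|\leq v_a(w)$. For such $a$-positive (hence $a$-self-adjoint) $w$ we have $v_a(w)=\|w\|_a$, which is the same identification the paper uses in Corollary~\ref{new thm_1}. Taking the supremum over $f\in\mathfrak{S}_a(\mathfrak{A})$ then yields $dv_a^2(x)\leq\|w\|_a$.

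The main obstacle is the step in (b) that passes from $\bigl(f(a\frac{u+v}{2})\bigr)^2$ to $f\bigl(a(\frac{u+v}{2})^2\bigr)$ via Lemma~\ref{sp}. That lemma needs its argument to be positive, and here $\frac{u+v}{2}$ is a sum of continuous functions of $|x|_a$ and $|x^{\sharp_a}|_a$, which are positive. I would also check that $a$ commutes with these elements (Lemma~\ref{cts}, using $ax=xa$), so that Lemma~\ref{sp} applies in the form $f(ay)^2\leq f(ay^2)$. The identification $v_a(w)=\|w\|_a$ for $a$-positive $w$ should be taken from the known results of \cite{bourhim2021a-numerical}.
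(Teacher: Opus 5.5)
Your proposal is correct and is essentially the paper's proof. You set $\phi(t)=t$ in \eqref{thm.4.19} (equivalently, read off the pointwise chain), merge the terms by linearity of $f$ into $f(aw)$, bound this by $\|w\|_a$, and take the supremum over $f\in\mathfrak{S}_a(\mathfrak{A})$. Your final step through the $a$-positivity of $w$ and $v_a(w)=\|w\|_a$ is valid but unnecessary, since Lemma~\ref{c-s_inq} with $x=\mathbf{1}$ already gives $|f(aw)|\leq\sqrt{f(w^*aw)}\leq\|w\|_a$ for every $w\in\mathfrak{A}$.
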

\begin{proof}
    In inequality~\eqref{thm.4.19} if we choose $\phi(t)=t$, $t\geq0$, then we have \begin{eqnarray*}
    &&|f(ax)|^2+f(ax^{\sharp_a}x)^2\\
    &\leq& f\bigg(a\bigg(\frac{\alpha}{4}(f_1^2(|x|_a)+g_1^2(|x^{\sharp_a}|_a))^2+\frac{\alpha}{4}(f_1^2(|x^{\sharp_a}x|_a)+g_1^2(|x^{\sharp_a}x|_a))^2
    +(1-\alpha)(xx^{\sharp_a}+(x^{\sharp_a}x)^2)\bigg)\bigg)\\
    &=&\left\|\alpha(q^2+s^2)+(1-\alpha)(xx^{\sharp_a}+(x^{\sharp_a}x)^2)\right\|_a,\end{eqnarray*} where $q=\frac{1}{2}(f_1^2(|x|_a)+g_1^2(|x^{\sharp_a}|_a))$ and $s=\frac{1}{2}(f_1^2(|x^{\sharp_a}x|_a)+g_1^2(|x^{\sharp_a}x|_a))$. Taking the supremum over $f\in\mathfrak{S}_a(\mathfrak{A})$, yields the desired result.
\end{proof}
\begin{remark}
      If we take $a=\mathbf{1}$ and $\mathfrak{A}=\mathscr{B}(\mathscr{H})$, then Corollary~\ref{cor.4.19} becomes \cite[Theorem~2.19, inequality~(2.19) for $r=1$]{bhunia2021new}.
\end{remark}

    \section{Upper bounds of algebraic Davis--Wielandt radius}\label{sec4}

    We begin this section to derive an estimate of $dv(\cdot)$ in terms of non-negative continuous functions. In the sequel to prove Theorem~\ref{new thm}, we need the following lemmas. In \cite[Lemma~2.10]{mahapatra2024upper} by putting $a=\mathbf{1}$ we state Lemma~\ref{buzano}.
\begin{lemma}\label{buzano}
	Let $x,y \in \mathfrak{A}$ and $f \in \mathfrak{S}(\mathfrak{A})$. Then
	\[
		|f(x)||f(y)|\leq \frac{1}{2}\sqrt{f(x^*x)}\sqrt{f(yy^*)}+\frac{1}{2}|f(yx)|.
	\]
\end{lemma}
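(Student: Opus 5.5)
This is Buzano's inequality in the state setting. The plan is to pass to the GNS representation and then apply the classical Buzano inequality for vectors in a Hilbert space.

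Given $f\in\mathfrak{S}(\mathfrak{A})$, the GNS construction (equivalently \cite[Lemma~2.4]{bourhim2021a-numerical} with $a=\mathbf{1}$) gives a Hilbert space $\mathscr{H}$, a representation $\pi:\mathfrak{A}\to\mathscr{B}(\mathscr{H})$ and a unit vector $\xi$ with $f(z)=\langle\pi(z)\xi,\xi\rangle$ for all $z\in\mathfrak{A}$. The quantities in the statement then become
\[
f(x)=\langle\pi(x)\xi,\xi\rangle,\qquad f(y)=\langle \xi,\pi(y)^*\xi\rangle,\qquad f(yx)=\langle\pi(x)\xi,\pi(y)^*\xi\rangle ,
\]
together with $f(x^*x)=\|\pi(x)\xi\|^2$ and $f(yy^*)=\|\pi(y)^*\xi\|^2$.

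Next I invoke Buzano's inequality: for $u,v\in\mathscr{H}$ and a unit vector $e$,
\[
|\langle u,e\rangle\langle e,v\rangle|\leq\tfrac12\bigl(\|u\|\|v\|+|\langle u,v\rangle|\bigr).
\]
I apply it with $u=\pi(x)\xi$, $v=\pi(y)^*\xi$ and $e=\xi$. The left side is then $|f(x)|\,|\overline{f(y^*)}|=|f(x)||f(y)|$. The right side is $\frac12\bigl(\sqrt{f(x^*x)}\sqrt{f(yy^*)}+|f(yx)|\bigr)$, which is exactly the claimed inequality.

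Alternatively, the statement is the special case $a=\mathbf{1}$ of \cite[Lemma~2.10]{mahapatra2024upper}. In that case $\mathfrak{S}_{\mathbf{1}}(\mathfrak{A})=\mathfrak{S}(\mathfrak{A})$ and every $x$ has $\mathbf{1}$-adjoint $x^*$, so the statement could simply be quoted.

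The only real care needed is in the bookkeeping of adjoints. One has to check that $f(y)=\langle\pi(y)\xi,\xi\rangle=\langle\xi,\pi(y)^*\xi\rangle$, so that the pairing $\langle u,v\rangle$ produces $f(yx)$ and not $f(xy)$. Likewise the norm of $\pi(y)^*\xi$ must give $f(yy^*)$ rather than $f(y^*y)$. I do not expect any genuine obstacle, since Buzano's inequality itself follows from Cauchy--Schwarz applied to the operator $P_e-\tfrac12 I$, whose norm is $\tfrac12$.
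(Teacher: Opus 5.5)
Your proof is correct. The adjoint bookkeeping checks out: with $u=\pi(x)\xi$, $v=\pi(y)^*\xi$ and $e=\xi$ you get $\langle u,e\rangle=f(x)$, $\langle e,v\rangle=f(y)$, $\langle u,v\rangle=f(yx)$, $\|u\|^2=f(x^*x)$ and $\|v\|^2=f(yy^*)$. So the classical Buzano inequality yields the statement verbatim.

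The paper gives no argument of its own. It obtains the lemma by setting $a=\mathbf{1}$ in \cite[Lemma~2.10]{mahapatra2024upper}, which is exactly your alternative remark. The relevant points there are $\mathfrak{S}_{\mathbf 1}(\mathfrak{A})=\mathfrak{S}(\mathfrak{A})$ and that $x^*$ is a $\mathbf{1}$-adjoint of $x$.

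Your main route differs in that it is self-contained. It needs only the GNS representation and Buzano's inequality, and the latter follows from Cauchy--Schwarz via $\|P_e-\tfrac12 I\|=\tfrac12$. It also shows explicitly why the pairing produces $f(yx)$ and $f(yy^*)$ rather than $f(xy)$ and $f(y^*y)$, which the bare citation leaves the reader to verify. The citation route is shorter and matches the paper's habit of recovering unweighted statements from the $a$-weighted literature, but it inherits whatever hypotheses the weighted lemma carries. Your direct argument avoids that dependence entirely.
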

\begin{lemma}\label{lm2}
	  Let $x\in \mathfrak{A}$ and $f\in \mathfrak{S}(\mathfrak{A})$. Suppose $f_1,g_1$ are two non-negative continuous functions on $[0, \infty)$ such that $f_1(t)g_1(t)=t$, for all $t\in[0, \infty)$. If $\psi_1$ and $\psi_2$ are complementary Orlicz functions of $\phi_1$ and $\phi_2$, respectively, then 
	\[
		|f(x)|^2 
		 \leq  \frac{1}{2} |f(x^2)|+\frac{1}{4}\bigg[\phi_1\left(\sqrt{f(f_1^2(|x|^2))}\right)+\psi_1\left(\sqrt{f(g_1^2(|x|^2))}\right)+\phi_2\left(\sqrt{f(f_1^2(|x^*|^2))}\right) 
	+\psi_2\left(\sqrt{f(g_1^2(|x^*|^2))}\right)\bigg].
	\]
\end{lemma}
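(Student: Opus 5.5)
The plan is to start from the Buzano-type inequality of Lemma~\ref{buzano} with $y=x$. Then estimate the mixed term $\sqrt{f(x^*x)}\sqrt{f(xx^*)}$ by the AM--GM inequality, a mixed Schwarz inequality, and Young's inequality (Lemma~\ref{young}(i)).

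\emph{Step 1.} Taking $y=x$ in Lemma~\ref{buzano} gives
\[
|f(x)|^2\leq \tfrac12\sqrt{f(x^*x)}\sqrt{f(xx^*)}+\tfrac12|f(x^2)|.
\]
By the arithmetic--geometric mean inequality,
\[
\sqrt{f(x^*x)}\sqrt{f(xx^*)}\leq \tfrac12\bigl(f(|x|^2)+f(|x^*|^2)\bigr).
\]
It therefore suffices to show
\[
f(|x|^2)\leq \phi_1\Bigl(\sqrt{f(f_1^2(|x|^2))}\Bigr)+\psi_1\Bigl(\sqrt{f(g_1^2(|x|^2))}\Bigr),
\]
together with the analogous bound for $f(|x^*|^2)$ with $\phi_2,\psi_2$. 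Adding these and multiplying by $\tfrac12\cdot\tfrac12=\tfrac14$ then gives exactly the bracketed term of the statement.

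\emph{Step 2 (mixed Schwarz for the positive element $p=|x|^2$).} I will use Lemma~\ref{lem1} in the special case $a=\mathbf 1$. Here $\mathfrak S_{\mathbf 1}(\mathfrak A)=\mathfrak S(\mathfrak A)$, every element is in $\mathfrak A_{\mathbf 1}$ with $x^{\sharp_{\mathbf 1}}=x^*$, and the commutation hypothesis $ax=xa$ is automatic. Apply it to $p=x^*x$. Since $p\geq 0$, we have $|p|=(p^*p)^{1/2}=p$ and $|p^*|=p$. Hence
\[
f(|x|^2)=|f(p)|\leq \sqrt{f(f_1^2(|x|^2))}\,\sqrt{f(g_1^2(|x|^2))}.
\]
Alternatively, one can do this directly in the GNS representation, $f(p)=\langle \pi(p)\xi,\xi\rangle=\langle g_1(\pi(p))\xi,f_1(\pi(p))\xi\rangle$, followed by Cauchy--Schwarz, using $f_1(t)g_1(t)=t$ and the functional calculus. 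The same argument applies to $xx^*=|x^*|^2$.

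\emph{Step 3 (Young).} With $u=\sqrt{f(f_1^2(|x|^2))}$ and $v=\sqrt{f(g_1^2(|x|^2))}$, Lemma~\ref{young}(i) gives $uv\leq\phi_1(u)+\psi_1(v)$. Similarly, with the pair $(\phi_2,\psi_2)$ for $|x^*|^2$, this completes the bound required in Step 1.

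The main point needing care is Step 2: verifying that Lemma~\ref{lem1} (or the direct GNS computation) legitimately applies with $a=\mathbf 1$ to a positive element, so that both $f_1$ and $g_1$ are evaluated at the same $|x|^2$. Once that is checked, the rest is bookkeeping of the constants $\tfrac12$ and $\tfrac14$.
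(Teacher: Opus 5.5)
Your proposal is correct and follows the paper's proof step for step. The paper also applies Lemma~\ref{buzano} with $y=x$, then AM--GM on $\sqrt{f(x^*x)}\sqrt{f(xx^*)}$, then Lemma~\ref{lem1} (with $a=\mathbf 1$) on $|x|^2$ and $|x^*|^2$, and finally Young's inequality (Lemma~\ref{young}(i)). Your direct GNS/Cauchy--Schwarz argument for Step 2 is a useful self-contained check, since for $a=\mathbf 1$ and a positive element it is just the classical mixed Schwarz inequality.
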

\begin{proof} Let $x\in \mathfrak{A}$ and $f\in \mathfrak{S}(\mathfrak{A})$. Then
	\begin{eqnarray*}
			 &&|f(x)|^2 \\
			& \leq& \frac{1}{2} |f(x^2)|+\frac{1}{2}\sqrt{f(x^*x)}\sqrt{f(xx^*)} \quad\text{(from Lemma~\ref{buzano})} \\
			& \leq& \frac{1}{2} |f(x^2)|+\frac{1}{4}\left(f(x^*x)+f(xx^*)\right)  \\
			& \leq&  \frac{1}{2} |f(x^2)|+\frac{1}{4}\left[\sqrt{f(f_1^2(|x|^2))}\sqrt{f(g_1^2(|x|^2))}
+\sqrt{f(f_1^2(|x^*|^2))}\sqrt{f(g_1^2(|x^*|^2))}\right]\quad\text{(by Lemma~\ref{lem1})} \\
			& \leq&  \frac{1}{2} |f(x^2)|+\frac{1}{4}\left[\phi_1\left(\sqrt{f(f_1^2(|x|^2))}\right)+\psi_1\left(\sqrt{f(g_1^2(|x|^2))}\right)+\phi_2\left(\sqrt{f(f_1^2(|x^*|^2))}\right) 
	+\psi_2\left(\sqrt{f(g_1^2(|x^*|^2))}\right)\right]\\
    && \text{(using Lemma~\ref{young})}.
	\end{eqnarray*}
\end{proof}

If we take $\phi_1(t)= \phi_2(t)= \psi_1(t)= \psi_2(t)=\frac{t^2}{2}$, then from Lemma~\ref{lm2} we get
\begin{eqnarray}\label{0.3}
		 |f(x)|^2 
		 \leq \frac{1}{2} |f(x^2)|+\frac{1}{8}\left[f\left(f_1^2(|x|^2)+g_1^2(|x|^2)+f_1^2(|x^*|^2)+g_1^2(|x^*|^2)\right)\right].
\end{eqnarray}

\begin{theorem}\label{new thm}
	Let $x\in \mathfrak{A}$. Suppose $f_1, g_1$ are two non-negative continuous functions on $[0, \infty)$ such that $f_1(t)g_1(t)=t$, for all $t\in[0, \infty)$. If $\phi$ is an Orlicz function and $\alpha \in [0, 1]$, then
	\begin{eqnarray*}
		\phi(dv^2(x))
		& \leq & \frac{\alpha}{2}\bigg[\phi\left(v(x^2)+v(|x|^4)\right)+\phi\bigg(\frac{1}{4}(v(f_1^2(|x|^2)+g_1^2(|x|^2)+f_1^2(|x^*|^2)+g_1^2(|x^*|^2)))+\frac{1}{2} \nonumber\\
			&& (v(f_1^2(|x|^4)+g_1^2(|x|^4)))\bigg)\bigg]+(1-\alpha)\phi\left(v(|x^*|^2+|x|^4)\right)\nonumber.
	\end{eqnarray*}
\end{theorem}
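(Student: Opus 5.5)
Fix $f\in\mathfrak{S}(\mathfrak{A})$; here $dv(x)=\sup_f\sqrt{|f(x)|^2+f(x^*x)^2}$ is the case $a=\mathbf{1}$, so $x^{\sharp_a}=x^*$. The plan is to split $|f(x)|^2+f(x^*x)^2$ as
\[
\alpha\bigl(|f(x)|^2+f(|x|^2)^2\bigr)+(1-\alpha)\bigl(|f(x)|^2+f(|x|^2)^2\bigr)
\]
and bound the two copies differently. In the $(1-\alpha)$ copy we use $|f(x)|^2=|f(x^*)|^2\le f(xx^*)=f(|x^*|^2)$ (inequality~\eqref{cons3} with $a=\mathbf{1}$) and $f(|x|^2)^2\le f(|x|^4)$ (Lemma~\ref{sp} with $r=2$, or Lemma~\ref{c-s_inq}). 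This gives $(1-\alpha)f(|x^*|^2+|x|^4)$.

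In the $\alpha$ copy we apply inequality~\eqref{0.3} to $x$ itself, which gives
\[
|f(x)|^2\le \tfrac12|f(x^2)|+\tfrac18 f\bigl(f_1^2(|x|^2)+g_1^2(|x|^2)+f_1^2(|x^*|^2)+g_1^2(|x^*|^2)\bigr).
\]
We then treat $f(|x|^2)^2$ the same way, applying \eqref{0.3} to the positive element $y=|x|^2$. Since $y^*=y$, $y^2=|x|^4$ and $|y|^2=|y^*|^2=|x|^4$, this gives
\[
f(|x|^2)^2\le\tfrac12 f(|x|^4)+\tfrac14 f\bigl(f_1^2(|x|^4)+g_1^2(|x|^4)\bigr).
\]
Adding, the $\alpha$ copy is at most $\frac{\alpha}{2}(P+Q)$, where
\[
P=|f(x^2)|+f(|x|^4),\qquad Q=\tfrac14 f\bigl(f_1^2(|x|^2)+g_1^2(|x|^2)+f_1^2(|x^*|^2)+g_1^2(|x^*|^2)\bigr)+\tfrac12 f\bigl(f_1^2(|x|^4)+g_1^2(|x|^4)\bigr).
\]

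Now we apply $\phi$. Write the total as $\alpha\cdot\frac{P+Q}{2}+(1-\alpha)R$ with $R=f(|x^*|^2+|x|^4)$. Since $\phi$ is non-decreasing and convex, we get
\[
\phi\bigl(|f(x)|^2+f(|x|^2)^2\bigr)\le \alpha\,\phi\!\left(\tfrac{P+Q}{2}\right)+(1-\alpha)\phi(R).
\]
By convexity again (or Hermite--Hadamard, as in the earlier theorems), $\phi\bigl(\frac{P+Q}{2}\bigr)\le\frac12\phi(P)+\frac12\phi(Q)$. Finally we bound each argument by numerical radii: $P\le v(x^2)+v(|x|^4)$, $Q\le \frac14 v(\cdot)+\frac12 v(\cdot)$ of the stated positive elements, and $R\le v(|x^*|^2+|x|^4)$. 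Monotonicity of $\phi$ then gives the claimed right-hand side. Taking the supremum over $f$ on the left yields $\phi(dv^2(x))$, using that $\phi$ is continuous and non-decreasing.

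The main obstacle is bookkeeping in the $\alpha$ copy: the coefficients from \eqref{0.3} must combine exactly into $\frac12(P+Q)$. The term $\frac12 f(|x|^4)$ must pair with $\frac12|f(x^2)|$, and the $\frac14$ versus $\frac12$ weights in $Q$ must come out right. A second point to check is that \eqref{0.3} legitimately applies to $y=|x|^2$. Its derivation through Lemma~\ref{lm2} invokes Lemma~\ref{lem1} with $a=\mathbf{1}$, where the commutation hypothesis $ax=xa$ is trivial, so this is fine. It is also worth confirming that the $\frac18$ factor doubles to $\frac14$ because the bound is expressed as $\frac{\alpha}{2}(P+Q)$ rather than $\alpha(P+Q)$.
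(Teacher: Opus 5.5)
Your proposal is correct and is essentially the paper's own proof. It uses the same $\alpha/(1-\alpha)$ splitting, with inequality~\eqref{0.3} applied to $x$ and to the positive element $|x|^2$ for the $\alpha$ part, and inequality~\eqref{cons3} together with Lemma~\ref{sp} for the $(1-\alpha)$ part; then convexity of $\phi$ is used twice and the supremum over states is taken, with your coefficient bookkeeping $\frac{\alpha}{2}(P+Q)$ matching the paper's display exactly.
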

\begin{proof} Let $x\in \mathfrak{A}$ and $f\in \mathfrak{S}(\mathfrak{A})$. For any $\alpha\in [0,1]$ and using the fact $|f(x)|=|f(x^*)|$, we get
	\begin{eqnarray}\label{thm.4.16}
    && \phi(|f(x)|^2+f(x^*x)^2) \nonumber\\
			& =& \phi\left(\alpha|f(x)|^2+(1-\alpha)|f(x)|^2+\alpha f(|x|^2)^2+(1-\alpha)f(|x|^2)^2\right) \nonumber\\
			& \leq& \phi\bigg(\frac{\alpha}{2} |f(x^2)|+\frac{\alpha}{8}(f(f_1^2(|x|^2)+g_1^2(|x|^2)+f_1^2(|x^*|^2)+g_1^2(|x^*|^2)))+\frac{\alpha}{2}f(|x|^4)+\frac{\alpha}{4} \nonumber\\
			&& (f(f_1^2(|x|^4)+g_1^2(|x|^4)))+(1-\alpha)(|f(x^*)|^2+f(|x|^2)^2))\bigg) \quad\text{(by inequality~\eqref{0.3})} \nonumber\\
			&\leq& \phi\bigg(\frac{\alpha}{2} |f(x^2)|+\frac{\alpha}{8}(f(f_1^2(|x|^2)+g_1^2(|x|^2)+f_1^2(|x^*|^2)+g_1^2(|x^*|^2)))+\frac{\alpha}{2}f(|x|^4)+\frac{\alpha}{4}\nonumber\\
			&& (f(f_1^2(|x|^4)+g_1^2(|x|^4)))+(1-\alpha)f(xx^*+|x|^4)\bigg)
            \quad\text{(using inequality~\eqref{cons3} and Lemma~\ref{sp})} \nonumber\\
			& \leq& \alpha\phi\bigg(\frac{1}{2} |f(x^2)|+\frac{1}{8}(f(f_1^2(|x|^2)+g_1^2(|x|^2)+f_1^2(|x^*|^2)+g_1^2(|x^*|^2)))+\frac{1}{2}f(|x|^4)+\frac{1}{4}\nonumber\\
            &&(f(f_1^2(|x|^4)+g_1^2(|x|^4)))\bigg)+(1-\alpha)\phi\left(f(|x^*|^2+|x|^4)\right) \nonumber\\
			& \leq& \frac{\alpha}{2}\bigg[\phi\left(|f(x^2)|+f(|x|^4)\right)+\phi\bigg(\frac{1}{4}(f(f_1^2(|x|^2)+g_1^2(|x|^2)+f_1^2(|x^*|^2)+g_1^2(|x^*|^2)))+\frac{1}{2} \nonumber\\
			&& (f(f_1^2(|x|^4)+g_1^2(|x|^4)))\bigg)\bigg]+(1-\alpha)\phi\left(f(|x^*|^2+|x|^4)\right).		
	\end{eqnarray}
	Taking the supremum over $f\in \mathfrak{S}(\mathfrak{A})$, we get the required result.
\end{proof} 
It is easy to observe the next corollary.
\begin{corollary}\label{cor.4.17}
  Let $x\in \mathfrak{A}$. Suppose $f_1, g_1$ are two non-negative continuous functions on $[0, \infty)$ such that $f_1(t)g_1(t)=t$, for all $t\in[0, \infty)$. Then for $\alpha\in [0,1]$,
    \[dv^2(x)\leq\frac{\alpha}{2}v(x^2)+\left\|\frac{\alpha}{8}l+(1-\alpha)|x^*|^2+\left(1-\frac{\alpha}{2}\right)|x|^4\right\|,\] where $l=f_1^2(|x|^2)+g_1^2(|x|^2)+f_1^2(|x^*|^2)+g_1^2(|x^*|^2)+2(f_1^2(|x|^4)+g_1^2(|x|^4))$.
\end{corollary}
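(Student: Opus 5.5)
The plan is to specialize the proof of Theorem~\ref{new thm} to the Orlicz function $\phi(t)=t$. Linearity then removes every convexity step, so all the terms can be merged under a single state. Fix $f\in\mathfrak{S}(\mathfrak{A})$ and $\alpha\in[0,1]$.

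First, split $|f(x)|^2+f(|x|^2)^2$ as $\alpha(\cdot)+(1-\alpha)(\cdot)$, exactly as in the first line of inequality~\eqref{thm.4.16}.

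The $\alpha$-part is handled term by term.
\begin{itemize}
\item For $|f(x)|^2$, apply inequality~\eqref{0.3} directly. This gives $\frac12|f(x^2)|+\frac18 f\left(f_1^2(|x|^2)+g_1^2(|x|^2)+f_1^2(|x^*|^2)+g_1^2(|x^*|^2)\right)$.
\item For $f(|x|^2)^2=|f(|x|^2)|^2$, apply inequality~\eqref{0.3} to the self-adjoint element $|x|^2$. Here $(|x|^2)^2=|x|^4$, and both $||x|^2|^2$ and $||x|^2|^{*2}$ equal $|x|^4$. This gives $\frac12 f(|x|^4)+\frac14 f\left(f_1^2(|x|^4)+g_1^2(|x|^4)\right)$. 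This is precisely the expression already used in the proof of Theorem~\ref{new thm}.
\end{itemize}

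The $(1-\alpha)$-part is handled as in that proof. Use $|f(x)|=|f(x^*)|$ and inequality~\eqref{cons3} with $a=\mathbf 1$ to get $|f(x^*)|^2\le f(xx^*)=f(|x^*|^2)$. Then use Lemma~\ref{sp} with $r=2$ to get $f(|x|^2)^2\le f(|x|^4)$.

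Adding the two parts gives
\[
|f(x)|^2+f(x^*x)^2\le \frac{\alpha}{2}|f(x^2)|+f\left(\frac{\alpha}{8}l+(1-\alpha)|x^*|^2+\left(1-\frac{\alpha}{2}\right)|x|^4\right).
\]
Two bookkeeping points are needed here. The coefficient of $|x|^4$ is $\frac{\alpha}{2}+(1-\alpha)=1-\frac{\alpha}{2}$. The factor $\frac{\alpha}{4}$ on $f_1^2(|x|^4)+g_1^2(|x|^4)$ is absorbed into $\frac{\alpha}{8}l$, which is exactly why $l$ carries the term $2(f_1^2(|x|^4)+g_1^2(|x|^4))$.

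Finally, bound the two pieces separately. We have $|f(x^2)|\le v(x^2)$. The element inside the second $f$ is positive, being a nonnegative combination of positive elements, so its value under a state is at most its norm. Taking the supremum over $f\in\mathfrak{S}(\mathfrak{A})$ then yields the corollary.

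The only step needing care is the application of inequality~\eqref{0.3} to $|x|^2$, that is, checking that the functional calculus expressions reduce to $f_1^2(|x|^4)$ and $g_1^2(|x|^4)$ as written. Everything else is coefficient bookkeeping.
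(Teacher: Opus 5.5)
Your proposal is correct and is essentially the paper's proof. The paper sets $\phi(t)=t$ in the chain \eqref{thm.4.16} from Theorem~\ref{new thm}, which is exactly the $\alpha$/$(1-\alpha)$ split you reconstruct: \eqref{0.3} is applied to $x$ and to $|x|^2$, inequality \eqref{cons3} and Lemma~\ref{sp} handle the $(1-\alpha)$-part, and then $|f(x^2)|\le v(x^2)$ and $f(\cdot)\le\|\cdot\|$ are used before taking the supremum. Your explicit bookkeeping ($\frac{\alpha}{4}=\frac{\alpha}{8}\cdot 2$ and $\frac{\alpha}{2}+(1-\alpha)=1-\frac{\alpha}{2}$) makes transparent what the paper's compressed display encodes.
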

\begin{proof}
In inequality \eqref{thm.4.16} choosing $\phi(t)=t$, $t\geq0$, we get 
\begin{eqnarray*}
    |f(x)|^2+f(x^*x)^2
    &\leq&\frac{\alpha}{2}|f(x^2)|+f\bigg(\frac{\alpha}{2}|x|^4+\frac{\alpha}{8}(f_1^2(|x|^2)+g_1^2(|x|^2)+f_1^2(|x^*|^2)+g_1^2(|x^*|^2))+\\
    &&2(f_1^2(|x|^4)+g_1^2(|x|^4)))+(1-\alpha)(|x^*|^2+|x|^4)\bigg)\\
    &\leq&\frac{\alpha}{2}v(x^2)+\left\|\frac{\alpha}{8}l+(1-\alpha)|x^*|^2+\left(1-\frac{\alpha}{2}\right)|x|^4\right\|,
\end{eqnarray*} where $l=f_1^2(|x|^2)+g_1^2(|x|^2)+f_1^2(|x^*|^2)+g_1^2(|x^*|^2)+2(f_1^2(|x|^4)+g_1^2(|x|^4))$. Taking the supremum over $f\in\mathfrak{S}(\mathfrak{A})$, we get the desired result. 
\end{proof}
\begin{remark}
    If we take $\mathfrak{A}=\mathscr{B}(\mathscr{H})$, then Corollary~\ref{cor.4.17} becomes \cite[Theorem~2.15, inequality~(2.11) for $r=1$]{bhunia2021new}.
\end{remark}
Let $x$ be an invertible element in a unital $C^*$-algebra $\mathfrak{A}$. Then there exists a unique unitary element $u$ such that $x=u|x|$, where $|x|=\sqrt{x^*x}$. Also, then $|x^*|^{\alpha}=u|x|^{\alpha}u^*$ for any $\alpha\in(0, \infty)$. We now calculate the upper bound of the algebraic Davis--Wielandt radius of the product of three elements of $\mathfrak{A}$ using the following lemmas.
\begin{lemma}\label{lem3}\cite[Lemma~2.7]{mahapatra2024upper}
	Let $x,y\in\mathfrak{A}$ and $f\in \mathfrak{S}(\mathfrak{A})$. Then
	\[
		f(y^*x^*xy)\leq \|x\|^2f(y^*y).
	\]
\end{lemma}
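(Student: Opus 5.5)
The plan is to reduce to the scalar Cauchy--Schwarz inequality for states via the GNS construction, exactly as Lemma~\ref{c-s_inq} and Lemma~\ref{pi} were handled earlier. Fix $f\in\mathfrak{S}(\mathfrak{A})$. Let $(\pi,\mathscr{H},\xi)$ be the GNS triple, so that $f(z)=\langle\pi(z)\xi,\xi\rangle$ for all $z\in\mathfrak{A}$ and $\|\xi\|=1$. This is the case $a=\mathbf{1}$ of the representation recalled at the start of Section~\ref{sec3}, where $A=\pi(\mathbf{1})=I$.

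Since $\pi$ is a $*$-homomorphism,
\[
f(y^*x^*xy)=\langle\pi(y)^*\pi(x)^*\pi(x)\pi(y)\xi,\xi\rangle=\|\pi(x)\pi(y)\xi\|^2 .
\]
Applying the operator norm bound $\|\pi(x)\eta\|\le\|\pi(x)\|\,\|\eta\|$ with $\eta=\pi(y)\xi$ gives
\[
\|\pi(x)\pi(y)\xi\|^2\le\|\pi(x)\|^2\,\|\pi(y)\xi\|^2=\|\pi(x)\|^2 f(y^*y).
\]
Finally, a $*$-homomorphism between $C^*$-algebras is contractive, so $\|\pi(x)\|\le\|x\|$, and the inequality follows.

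Alternatively, one can argue purely algebraically without a representation. The $C^*$-inequality $x^*x\le\|x\|^2\mathbf{1}$ holds in $\mathfrak{A}^+$. Conjugation preserves order, so $y^*x^*xy\le\|x\|^2y^*y$. Since $f$ is positive, applying it to this order inequality yields the claim directly.

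There is no real obstacle here. The only point needing care is justifying the order inequality $x^*x\le\|x\|^2\mathbf{1}$ and its preservation under the map $z\mapsto y^*zy$. The first follows from the continuous functional calculus on the positive element $x^*x$, whose spectrum lies in $[0,\|x\|^2]$. The second holds because $y^*(\|x\|^2\mathbf{1}-x^*x)y=(ty)^*(ty)$, where $t=(\|x\|^2\mathbf{1}-x^*x)^{1/2}$, and an element of this form is positive. I would present the algebraic argument as the main proof, since it is shortest.
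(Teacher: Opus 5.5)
Both of your arguments are correct. The paper gives no proof of Lemma~\ref{lem3}; it imports the result (the case $a=\mathbf{1}$) from \cite[Lemma~2.7]{mahapatra2024upper}, so there is no internal argument to measure yours against.

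Your GNS route fits the paper's own style. Compare the proofs of Lemmas~\ref{nl1}, \ref{lem1} and \ref{MP}: each passes through the representation $\pi$ and then applies an inequality in $\mathscr{B}(\mathscr{H})$. Beyond the GNS construction, your version needs only that $*$-homomorphisms are contractive.

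Your algebraic route needs no representation and proves slightly more. The order inequality $y^*x^*xy\le\|x\|^2y^*y$ holds in $\mathfrak{A}$ itself, so the conclusion holds for every positive linear functional, not only for states. Its one nontrivial input is the positivity of elements of the form $w^*w$, which you invoke correctly.

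A minor simplification: $x^*x\le\|x\|^2\mathbf{1}$ needs only that $x^*x$ is self-adjoint with $\|x^*x\|=\|x\|^2$, since every self-adjoint $h$ satisfies $h\le\|h\|\mathbf{1}$. You do not need to know in advance that $x^*x$ is positive for this step.

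Either argument is a complete proof of the stated lemma.
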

\begin{lemma}\label{lem4}\cite[Lemma~2.39]{mahapatra2024upper}
	Let $x,y,z\in\mathfrak{A}$ and $y$ be an invertible element in $\mathfrak{A}$. If $0\leq\alpha\leq1$ and $f\in \mathfrak{S}(\mathfrak{A})$, then
	\[
		|f(xyz)|^2\leq f(x|y^*|^{2(1-\alpha)}x^*)f(z^*|y|^{2\alpha}z).
	\]
\end{lemma}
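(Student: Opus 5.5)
We prove Lemma~\ref{lem4} by passing to a GNS representation and then applying two Cauchy--Schwarz steps around a split of $y$ through its polar decomposition. Given $f\in\mathfrak{S}(\mathfrak{A})$, there is a cyclic representation $\pi:\mathfrak{A}\to\mathscr{B}(\mathscr{H})$ with a unit vector $\xi$ such that $f(w)=\langle\pi(w)\xi,\xi\rangle$ for all $w\in\mathfrak{A}$. This is the case $a=\mathbf{1}$ of the representation recalled at the start of Section~\ref{sec3}. Since $\pi$ is a unital $*$-homomorphism, it preserves the continuous functional calculus, so $\pi(|y|^{\beta})=|\pi(y)|^{\beta}$ and $\pi(|y^*|^{\beta})=|\pi(y)^*|^{\beta}$ for every $\beta>0$. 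It therefore suffices to work with operators $X=\pi(x)$, $Y=\pi(y)$, $Z=\pi(z)$, where $Y$ is invertible.

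Since $y$ is invertible, we write its polar decomposition $y=u|y|$ with $u$ unitary in $\mathfrak{A}$; this is recalled just before the lemma. We then factor
\[
y=u|y|^{1-\alpha}|y|^{\alpha}=\bigl(u|y|^{1-\alpha}u^*\bigr)u|y|^{\alpha}=|y^*|^{1-\alpha}u|y|^{\alpha},
\]
where the last step uses $|y^*|^{\beta}=u|y|^{\beta}u^*$. Hence
\[
f(xyz)=\bigl\langle U|Y|^{\alpha}Z\xi,\;|Y^*|^{1-\alpha}X^*\xi\bigr\rangle ,\qquad U=\pi(u),
\]
because $|Y^*|^{1-\alpha}$ is self-adjoint. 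The Cauchy--Schwarz inequality in $\mathscr{H}$ gives
\[
|f(xyz)|^2\leq \bigl\||Y^*|^{1-\alpha}X^*\xi\bigr\|^2\,\bigl\|U|Y|^{\alpha}Z\xi\bigr\|^2 .
\]
The first factor equals $\langle X|Y^*|^{2(1-\alpha)}X^*\xi,\xi\rangle=f(x|y^*|^{2(1-\alpha)}x^*)$. The second factor equals $\langle Z^*|Y|^{\alpha}U^*U|Y|^{\alpha}Z\xi,\xi\rangle=f(z^*|y|^{2\alpha}z)$, since $u^*u=\mathbf{1}$. Alternatively, one can avoid unitarity and use Lemma~\ref{lem3} with $\|u\|\le1$ to bound this factor. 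Multiplying the two factors yields the claim.

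The step that needs care is the identity $|y^*|^{1-\alpha}=u|y|^{1-\alpha}u^*$. We justify it as follows. From $yy^*=u|y|^2u^*$ and the fact that $v\mapsto uvu^*$ is a $*$-automorphism, we get $u\,g(|y|^2)\,u^*=g(u|y|^2u^*)$ for every continuous $g$. Choosing $g(t)=t^{(1-\alpha)/2}$ gives the identity. At the endpoints $\alpha=0$ and $\alpha=1$ we use the conventions $|y|^0=|y^*|^0=\mathbf{1}$, which are legitimate because $y$ is invertible.
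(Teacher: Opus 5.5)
Your proof is correct and is essentially the argument the paper relies on. Lemma~\ref{lem4} is only cited here, but the same computation appears inline in the proof of Lemma~\ref{lemma5}: it splits $y=u|y|^{1-\alpha}|y|^{\alpha}$, applies the state Cauchy--Schwarz inequality (Lemma~\ref{c-s_inq} with $a=\mathbf{1}$) to $xu|y|^{1-\alpha}$ and $|y|^{\alpha}z$, and uses $u|y|^{2(1-\alpha)}u^*=|y^*|^{2(1-\alpha)}$. Your GNS detour and the regrouping $u|y|^{1-\alpha}=|y^*|^{1-\alpha}u$ are cosmetic variants, and the state Cauchy--Schwarz inequality would let you skip the representation entirely.
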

\begin{lemma}\label{lemma5}
	Let $x,y,z\in\mathfrak{A}$ and $y$ be an invertible element in $\mathfrak{A}$. If $0\leq\alpha\leq1$, $f\in \mathfrak{S}(\mathfrak{A})$, and if $\phi_1$ and $\phi_2$ are complementary functions of $\psi_1$ and $\psi_2$, respectively, then
	\begin{eqnarray*}
		&&|f(xyz)|^2+f(z^*y^*x^*xyz)^2\\
        &\leq & \phi_1(f(x|y^*|^{2(1-\alpha)}x^*) )+\psi_1(f(z^*|y|^{2\alpha}z)) 
			 +\|x\|^4\left[\phi_2(f(z^*|y|^{2(2-\alpha)}z))+\psi_2(f(z^*|y|^{2\alpha}z))\right].
	\end{eqnarray*}
\end{lemma}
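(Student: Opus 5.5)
The plan is to bound the two summands separately, each by Lemma~\ref{lem4} followed by Young's inequality (Lemma~\ref{young}(i)) for the appropriate complementary pair.

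\emph{First summand.} Lemma~\ref{lem4} applied to $x,y,z$ with the given $\alpha$ yields
\[
|f(xyz)|^2\leq f(x|y^*|^{2(1-\alpha)}x^*)\,f(z^*|y|^{2\alpha}z).
\]
Both factors are non-negative, since $x|y^*|^{2(1-\alpha)}x^*$ and $z^*|y|^{2\alpha}z$ are positive and $f$ is a state. Young's inequality with the pair $(\phi_1,\psi_1)$ then gives $|f(xyz)|^2\leq\phi_1(f(x|y^*|^{2(1-\alpha)}x^*))+\psi_1(f(z^*|y|^{2\alpha}z))$. Because complementarity of Orlicz functions is symmetric, it does not matter that the statement phrases $\phi_i$ as complementary to $\psi_i$.

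\emph{Second summand.} Here the plan has three steps.
\begin{enumerate}
\item Write $f(z^*y^*x^*xyz)=f((yz)^*x^*x(yz))$ and invoke Lemma~\ref{lem3} with the pair $(x,yz)$. This gives $0\leq f(z^*y^*x^*xyz)\leq\|x\|^2f(z^*y^*yz)$, and squaring yields $f(z^*y^*x^*xyz)^2\leq\|x\|^4 f(z^*|y|^2z)^2$.
\item Rewrite $f(z^*|y|^2z)$ as a value of $f$ on a product of three elements, $f(z^*|y|^2z)=f\big((z^*|y|^{1-\alpha})\,|y|\,(|y|^{\alpha}z)\big)$. Here $|y|$ is invertible (since $y$ is) and positive, so $|y|^*=|y|$ and all fractional powers are defined by functional calculus.
\item Apply Lemma~\ref{lem4} to the triple $(z^*|y|^{1-\alpha},\,|y|,\,|y|^{\alpha}z)$ with the same $\alpha$. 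The first factor becomes $f(z^*|y|^{1-\alpha}|y|^{2(1-\alpha)}|y|^{1-\alpha}z)=f(z^*|y|^{2(2-2\alpha)}z)$. This does not match the target exponent, so the split must be adjusted. The target pair $2(2-\alpha)$ and $2\alpha$ sums to $4$, which is consistent, since Cauchy--Schwarz requires the two exponents to average to $2$. So instead write $|y|^2=|y|^{2-\alpha}|y|^{\alpha}$ and apply Lemma~\ref{c-s_inq} with $a=\mathbf{1}$ to the vectors $|y|^{2-\alpha}z$ and $|y|^{\alpha}z$:
\[
f(z^*|y|^2z)^2=|f((|y|^{2-\alpha}z)^*(|y|^{\alpha}z))|^2\leq f(z^*|y|^{2(2-\alpha)}z)\,f(z^*|y|^{2\alpha}z).
\]
Young's inequality with $(\phi_2,\psi_2)$ then bounds the right-hand side by $\phi_2(f(z^*|y|^{2(2-\alpha)}z))+\psi_2(f(z^*|y|^{2\alpha}z))$.
\end{enumerate}

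Multiplying the bound from step~3 by $\|x\|^4$ and adding the bound for the first summand gives the claim.

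The main obstacle is step~3, choosing the split of $|y|^2$ so the exponents come out as stated. The Cauchy--Schwarz route with $|y|^{2-\alpha}$ and $|y|^{\alpha}$ does this directly, using that functional calculus of the positive invertible element $|y|$ makes these powers well-defined, self-adjoint, and multiplicative. The remaining steps are routine.
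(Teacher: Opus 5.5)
Your proposal is correct and follows the paper's proof: Lemma~\ref{lem4} for the first term, Lemma~\ref{lem3} for the second, then Cauchy--Schwarz (Lemma~\ref{c-s_inq} with $a=\mathbf{1}$) and Young's inequality. Your final split $|y|^2=|y|^{2-\alpha}|y|^{\alpha}$ is the same Cauchy--Schwarz step the paper carries out through $y=u|y|$ and $p=z^*y^*$ (note that $pu|y|^{1-\alpha}=z^*|y|^{2-\alpha}$), just without the unitary, so you can drop the abandoned Lemma~\ref{lem4} detour in your steps 2--3.
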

\begin{proof} From Lemma~\ref{lem4}, we have
		\[
		|f(xyz)|^2\leq f(x|y^*|^{2(1-\alpha)}x^*)f(z^*|y|^{2\alpha}z).
	\]
	Also, by Lemma~\ref{lem3}, we have
	\[
			 f((xyz)^*(xyz))^2 
			 = f(z^*y^*x^*xyz)^2 
			 \leq \|x\|^4 f(z^*y^*yz)^2.
	\]
	Since $y$ is invertible, $f(z^*y^*yz)^2 = f(z^*y^*u|y|z)^2$.
	Taking $z^*y^*=p$ and using Lemma~\ref{c-s_inq}, we get
	\begin{eqnarray*}
			 f(pu|y|z)^2 
			& = &f(pu|y|^{1-\alpha}|y|^\alpha z)^2 \\
			& \leq& f(pu|y|^{1-\alpha}|y|^{1-\alpha}u^*p^*)f(z^*|y|^\alpha|y|^\alpha z) \\
			& = &f(p|y^*|^{2(1-\alpha)}p^*)f(z^*|y|^{2\alpha}z) \\
			& =& f(z^*|y|u^*|y^*|^{2(1-\alpha)}u|y|z)f(z^*|y|^{2\alpha}z) \\
			& = &f(z^*|y||y|^{2(1-\alpha)}|y|z)f(z^*|y|^{2\alpha}z) \\
			& = & f(z^*|y|^{2(2-\alpha)}z)f(z^*|y|^{2\alpha}z).
	\end{eqnarray*}
	Hence, 
	\begin{eqnarray*}
		&&|f(xyz)|^2+f(z^*y^*x^*xyz)^2\\
        &\leq& f(x|y^*|^{2(1-\alpha)}x^*)f(z^*|y|^{2\alpha}z)+\|x\|^4f(z^*|y|^{2(2-\alpha)}z)f(z^*|y|^{2\alpha}z)\\
        & \leq &\phi_1(f(x|y^*|^{2(1-\alpha)}x^*) )+\psi_1(f(z^*|y|^{2\alpha}z)) 
				 +\|x\|^4\left[\phi_2(f(z^*|y|^{2(2-\alpha)}z))+\psi_2(f(z^*|y|^{2\alpha}z))\right]\quad\text{(by Lemma~\ref{young})}.
	\end{eqnarray*}
\end{proof}
				
\begin{theorem}
Let $x,y,z\in\mathfrak{A}$ be such that $y$ is invertible in $\mathfrak{A}$ and $\|x\|\leq1$. Also, let $0\leq\alpha\leq1$. Then for any $n\geq2$,
		\[
		dv^2(e)\leq \frac{1}{n}\left\|(x|y^*|^{2(1-\alpha)}x^*)^n +(z^*|y|^{2(2-\alpha)}z)^n\right\|  +\frac{2(n-1)}{n}\left\|z^*|y|^{2\alpha}z\right\|^{\tfrac{n}{n-1}}
	,\] where $e=xyz$.
\end{theorem}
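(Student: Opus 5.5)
Apply Lemma~\ref{lemma5} to $e=xyz$ with power-type complementary Orlicz functions, and then pass to the supremum over states. Fix $f\in\mathfrak{S}(\mathfrak{A})$ and abbreviate
\[
P=x|y^*|^{2(1-\alpha)}x^*,\qquad Q=z^*|y|^{2(2-\alpha)}z,\qquad R=z^*|y|^{2\alpha}z,
\]
all positive elements of $\mathfrak{A}$. In Lemma~\ref{lemma5} choose $\phi_1(t)=\phi_2(t)=\frac{t^n}{n}$, whose complementary functions are $\psi_1(t)=\psi_2(t)=\frac{n-1}{n}t^{\frac{n}{n-1}}$. The conjugate exponent of $n$ is $\frac{n}{n-1}$, and $n\ge2$ guarantees $n>1$. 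Using $\|x\|\le1$, so that $\|x\|^4\le1$, Lemma~\ref{lemma5} gives
\[
|f(e)|^2+f(e^*e)^2\le \frac{1}{n}f(P)^n+\frac{n-1}{n}f(R)^{\frac{n}{n-1}}+\frac{1}{n}f(Q)^n+\frac{n-1}{n}f(R)^{\frac{n}{n-1}}.
\]
Strictly, the factor $\|x\|^4$ multiplies only the second bracket, and we simply bound it by $1$.

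Next, convert the powers of state values into state values of powers. Lemma~\ref{sp} with $a=\mathbf{1}$, so that $\mathfrak{S}_a(\mathfrak{A})=\mathfrak{S}(\mathfrak{A})$, gives $f(P)^n\le f(P^n)$ and $f(Q)^n\le f(Q^n)$ for $n\ge1$. Hence
\[
\frac1n\bigl(f(P)^n+f(Q)^n\bigr)\le \frac1n f(P^n+Q^n)\le\frac1n\|P^n+Q^n\|,
\]
where the last step uses that $P^n+Q^n$ is positive and $f$ is a state. For the $R$-terms, use instead $0\le f(R)\le\|R\|$ together with monotonicity of $t\mapsto t^{\frac{n}{n-1}}$. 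The two $R$-terms then combine to $\frac{2(n-1)}{n}\|R\|^{\frac{n}{n-1}}$.

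Finally, the right-hand side no longer depends on $f$, so taking the supremum over $f\in\mathfrak{S}(\mathfrak{A})$ of $|f(e)|^2+f(e^*e)^2$ gives exactly $dv^2(e)$ and the stated bound.

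I expect the main obstacle to be bookkeeping rather than substance. One point is making sure the choice of $\phi_i,\psi_i$ is a genuine complementary Orlicz pair, so that Young's inequality (Lemma~\ref{young}) applies. The other is checking that the role-swap in Lemma~\ref{lemma5} matches this choice: there $\phi_i$ are described as complementary to $\psi_i$, which is symmetric, so this causes no problem. I also need to confirm that Lemma~\ref{sp} is applicable to the positive elements $P$ and $Q$; this is automatic when $a=\mathbf{1}$, since $\mathfrak{A}_{\mathbf{1}}=\mathfrak{A}$.
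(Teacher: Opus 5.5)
Your proposal is correct and follows the paper's proof essentially step by step. Both arguments choose $\phi_i(t)=\frac{t^n}{n}$ and $\psi_i(t)=\frac{n-1}{n}t^{\frac{n}{n-1}}$ in Lemma~\ref{lemma5}, use $\|x\|^4\le 1$, apply Lemma~\ref{sp} (with $a=\mathbf{1}$) to get $f(P)^n+f(Q)^n\le f(P^n+Q^n)\le\|P^n+Q^n\|$, bound $f(R)\le\|R\|$, and take the supremum over $f\in\mathfrak{S}(\mathfrak{A})$.
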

\begin{proof}
	Let $f\in\mathfrak{S}(\mathfrak{A})$. Putting $\phi_1(t)=\phi_2(t)=\frac{t^n}{n}$ and $\psi_1(t)=\psi_2(t) =\frac{n-1}{n}t^{\tfrac{n}{n-1}}$, for $t\geq0,\ n\geq2$ in Lemma~\ref{lemma5} and taking $\|x\|\leq 1$, we have 
	\begin{eqnarray*}
			&&	|f(xyz)|^2+f(z^*y^*x^*xyz)^2 \\
				& \leq& \frac{1}{n}\left[f(x|y^*|^{2(1-\alpha)}x^*)^n +\|x\|^4f(z^*|y|^{2(2-\alpha)}z)^n\right] 
				 +\frac{n-1}{n}\left [f(z^*|y|^{2\alpha}z)^{\tfrac{n}{n-1}}+\|x\|^4f(z^*|y|^{2\alpha}z)^{\tfrac{n}{n-1}}\right] \\
				& \leq &\frac{1}{n}\left[f((x|y^*|^{2(1-\alpha)}x^*)^n +(z^*|y|^{2(2-\alpha)}z)^n)\right]  +\frac{2(n-1)}{n}f(z^*|y|^{2\alpha}z)^{\tfrac{n}{n-1}} \\
				& \leq& \frac{1}{n}\left\|(x|y^*|^{2(1-\alpha)}x^*)^n +(z^*|y|^{2(2-\alpha)}z)^n\right\|  +\frac{2(n-1)}{n}\left\|z^*|y|^{2\alpha}z\right\|^{\tfrac{n}{n-1}}.
	\end{eqnarray*}
	Taking the supremum over $f\in \mathfrak{S}(\mathfrak{A})$, we get
	\[
		dv^2(e)\leq \frac{1}{n}\left\|(x|y^*|^{2(1-\alpha)}x^*)^n +(z^*|y|^{2(2-\alpha)}z)^n\right\|  +\frac{2(n-1)}{n}\left\|z^*|y|^{2\alpha}z\right\|^{\tfrac{n}{n-1}}
	,\]  where $e=xyz$.
\end{proof}
     An element $x\in\mathfrak{A}$ is called \emph{regular} if there exists an element $x'\in\mathfrak{A}$ such that $x=xx'x$. This element $x'$ is called a generalized inverse of $x$. It is straightforward to verify that $xx'$ and $x'x$ are idempotents in $\mathfrak{A}$. A generalized inverse $x'$ is said to be normalized if $x=xx'x$ and $x'=x'xx'$. If $\mathfrak{A}$ is equipped with an involution $^*$, idempotents $xx'$ and $x'x$ are also required to be self-adjoint, that is, $(xx')^*=xx'$, $(x'x)^*=x'x$. In this case, $x'$ is called the \emph{Moore--Penrose inverse} of $x$ and is denoted by $x^\dagger$. In \cite{koliha2007moore}, it is proved that an element of a $C^*$-algebra is regular if and only if it is Moore--Penrose invertible. Thus, for a regular element $x\in\mathfrak{A}$, the Moore--Penrose inverse $x^\dagger$ is the unique element that satisfies
\begin{eqnarray*}
(i)\ x^\dagger=x^\dagger xx^\dagger,\ (ii)\ x=xx^\dagger x,\ (iii)\ (x^\dagger x)^*=x^\dagger x,\ (iv)\ (xx^\dagger)^*=xx^\dagger.
\end{eqnarray*}
Moreover, $x$ is Moore--Penrose invertible if and only if $x^*$ is, in which case $(x^*)^\dagger=(x^\dagger)^*$. The Moore--Penrose invertibility of $x$ implies that of $x^*x$ and $xx^*$, with $(x^*x)^\dagger=x^\dagger(x^*)^\dagger$, $(xx^*)^\dagger=(x^*)^\dagger x^\dagger$. Furthermore, if $x$ is regular, then $x^\dagger$ is also regular, and $(x^\dagger)^\dagger=x$. Note that, for any representation $\pi:\mathfrak{A}\to\mathscr{B}(\mathscr{H})$, regularity of $x$ implies that $\pi(x)$ is also regular and $\pi(x)^\dagger=\pi(x^\dagger)$. If the notation $CR(\mathscr{H})$ stands for the set of closed range operators, then it is well known that any $T\in\mathscr{B}(\mathscr{H})$ has a unique Moore-Penrose inverse if and only if $T\in CR(\mathscr{H})$.

In this section, we derive some Davis--Wielandt radius inequalities of the regular elements in $\mathfrak{A}$ using Moore-Penrose inverse and Orlicz function. To prove our results, we need the following lemmas.
\begin{lemma}\cite[Theorem~2.1]{sababheh2024numerical}\label{lm 4.11}
Let $T\in CR(\mathscr{H})$. Then for any $x, y\in\mathscr{H}$, 
\[|\langle Tx,y\rangle|^2\leq\langle|T|^2x,x\rangle\langle TT^{\dagger}y,y\rangle.\]
\end{lemma}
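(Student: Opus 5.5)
The plan is to reduce the inequality to the classical Cauchy--Schwarz inequality in $\mathscr{H}$, using the Moore--Penrose identities to move the range projection $TT^\dagger$ onto the vector $y$. The key facts are that $TT^\dagger$ is the orthogonal projection onto $R(T)$, and that $R(T)$ is closed because $T\in CR(\mathscr{H})$. Both follow from properties (ii) and (iv) of the Moore--Penrose inverse: $TT^\dagger$ is idempotent and self-adjoint. Its range is contained in $R(T)$, and it contains $R(T)$ because $TT^\dagger T=T$.

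First I write $\langle Tx,y\rangle=\langle TT^\dagger Tx,y\rangle$, using $T=TT^\dagger T$. Since $TT^\dagger$ is self-adjoint, this equals $\langle Tx,TT^\dagger y\rangle$. Next I apply the Cauchy--Schwarz inequality in $\mathscr{H}$:
\[
|\langle Tx,y\rangle|^2\leq\|Tx\|^2\,\|TT^\dagger y\|^2 .
\]
The first factor is $\|Tx\|^2=\langle T^*Tx,x\rangle=\langle|T|^2x,x\rangle$. For the second factor, $P=TT^\dagger$ is a self-adjoint idempotent, so
\[
\|Py\|^2=\langle P^*Py,y\rangle=\langle P^2y,y\rangle=\langle Py,y\rangle=\langle TT^\dagger y,y\rangle .
\]
Combining the two identities gives the stated inequality.

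There is no real obstacle here. The only point needing care is that the Moore--Penrose inverse exists and satisfies (ii) and (iv), which is exactly where the hypothesis $T\in CR(\mathscr{H})$ is used, as recalled just before the statement. Everything after that is a one-line Cauchy--Schwarz estimate, with the projection moved onto $y$ so that the bound does not weaken.
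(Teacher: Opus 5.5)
Your proof is correct and complete. Writing $T=TT^\dagger T$, moving $TT^\dagger$ onto $y$ via $(TT^\dagger)^*=TT^\dagger$, applying Cauchy--Schwarz, and using $(TT^\dagger)^2=TT^\dagger$ gives exactly the stated bound.

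The paper does not prove this lemma; it imports it from Theorem~2.1 of Sababheh et al. Your argument is the standard direct proof of that result. The identification of $TT^\dagger$ as the orthogonal projection onto $R(T)$ is not actually needed: only properties (ii) and (iv) of the Moore--Penrose inverse are used.
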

\begin{lemma}\label{MP}
    Let $x\in\mathfrak{A}$ be a regular element and $f\in\mathfrak{S}(\mathfrak{A})$. Then
    \[|f(x)|^2\leq f(|x|^2)f(xx^\dagger).\]
\end{lemma}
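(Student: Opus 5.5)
We will transport Lemma~\ref{lm 4.11} to the algebra through a GNS representation, in the same way Lemma~\ref{nl1} was obtained from Lemma~\ref{inequ5}. Fix $f\in\mathfrak{S}(\mathfrak{A})$ and take the GNS triple $(\pi,\mathscr{H},\xi)$ for $f$. Here $\pi:\mathfrak{A}\to\mathscr{B}(\mathscr{H})$ is a representation and $\xi$ is a cyclic vector with $\|\xi\|=1$ and $f(y)=\langle\pi(y)\xi,\xi\rangle$ for all $y\in\mathfrak{A}$. This is the case $a=\mathbf{1}$ of the construction used at the start of Section~\ref{sec3}. Set $T=\pi(x)$.

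The first step is to check that $T\in CR(\mathscr{H})$. Since $x$ is regular, it has a Moore--Penrose inverse $x^\dagger$, as recalled before the statement. Applying the $*$-homomorphism $\pi$ to the four Penrose equations shows that $\pi(x^\dagger)$ satisfies them for $T$, so $T^\dagger=\pi(x^\dagger)$. An operator with a (bounded) Moore--Penrose inverse has closed range, so $T\in CR(\mathscr{H})$. Equivalently, $TT^\dagger$ is a bounded projection with range $R(T)$, since $TT^\dagger T=T$.

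Next we apply Lemma~\ref{lm 4.11} with both vectors equal to $\xi$:
\[
|f(x)|^2=|\langle T\xi,\xi\rangle|^2\leq\langle|T|^2\xi,\xi\rangle\,\langle TT^\dagger\xi,\xi\rangle .
\]
Finally we rewrite each factor as a value of $f$. We have $|T|^2=T^*T=\pi(x^*x)=\pi(|x|^2)$, so the first factor equals $f(|x|^2)$. We also have $TT^\dagger=\pi(x)\pi(x^\dagger)=\pi(xx^\dagger)$, so the second factor equals $f(xx^\dagger)$. This gives $|f(x)|^2\leq f(|x|^2)f(xx^\dagger)$.

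The only point needing care is that Lemma~\ref{lm 4.11} requires a closed-range operator, and that the Moore--Penrose inverse of $\pi(x)$ is exactly $\pi(x^\dagger)$. Both follow from the Penrose equations being preserved under $*$-homomorphisms together with the uniqueness of the Moore--Penrose inverse. Everything else is direct substitution.
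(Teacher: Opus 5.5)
Your proposal is correct and follows the paper's proof: pass to the GNS representation for $f$, apply Lemma~\ref{lm 4.11} to $T=\pi(x)$ with both vectors equal to the cyclic unit vector $\xi$, and identify $|\pi(x)|^2=\pi(|x|^2)$ and $\pi(x)\pi(x)^\dagger=\pi(xx^\dagger)$. Your explicit check, via the Penrose equations and uniqueness, that $\pi(x)$ has closed range and $\pi(x)^\dagger=\pi(x^\dagger)$ fills in what the paper only asserts in passing before the lemma.
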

\begin{proof}
Let $x\in\mathfrak{A}$ be a regular element, $f\in\mathfrak{S}(\mathfrak{A})$ and $\xi$ be a unit vector in $\mathscr{H}$. Then
\begin{eqnarray*}
    |f(x)|^2=|\langle\pi(x)\xi,\xi\rangle|^2&\leq&\langle|\pi(x)|^2\xi,\xi\rangle\langle\pi(x)\pi(x)^\dagger\xi,\xi\rangle\quad\text{(by Lemma~\ref{lm 4.11})}\\
    &=& \langle\pi(|x|^2)\xi,\xi\rangle\langle\pi(xx^\dagger)\xi,\xi\rangle\\
    &=& f(|x|^2)f(xx^\dagger).
\end{eqnarray*}
\end{proof}
\begin{theorem}\label{regthm}
    Let $x\in\mathfrak{A}$ be a regular element. If $\psi$ is a complementary Orlicz function of $\phi$, then 
    \[dv^2(x)\leq\phi\left(\sqrt{\||x|^4+|x|^8\|}\right)+\psi\left(\sqrt{\|xx^\dagger+x^\dagger x\|}\right).\]
\end{theorem}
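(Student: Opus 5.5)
The plan is to bound $|f(x)|^2+f(x^*x)^2$ for a fixed $f\in\mathfrak{S}(\mathfrak{A})$ by a product of two square roots, apply Young's inequality (Lemma~\ref{young}(i)), and then pass to norms and take the supremum over $f$.

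\emph{Step 1: the first term.} Lemma~\ref{MP} gives directly
\[
|f(x)|^2\leq f(|x|^2)\,f(xx^\dagger).
\]

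\emph{Step 2: the second term.} Since $x=xx^\dagger x$, we have $x^*x=x^*x(x^\dagger x)$. Apply the Cauchy--Schwarz inequality of Lemma~\ref{c-s_inq} with $a=\mathbf{1}$ to the pair $x^*x$ and $x^\dagger x$. Because $x^\dagger x$ is a self-adjoint idempotent, $(x^\dagger x)^*(x^\dagger x)=x^\dagger x$, and therefore
\[
f(x^*x)^2=|f((x^*x)(x^\dagger x))|^2\leq f(|x|^4)\,f(x^\dagger x).
\]

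\emph{Step 3: combine and apply Young.} Adding the two estimates and using the Cauchy--Schwarz inequality in $\mathbb{R}^2$ gives
\[
|f(x)|^2+f(x^*x)^2\leq\sqrt{f(|x|^2)^2+f(|x|^4)^2}\,\sqrt{f(xx^\dagger)^2+f(x^\dagger x)^2}.
\]
\begin{itemize}
\item In the first factor, Lemma~\ref{sp} with $a=\mathbf{1}$ and $r=2$ gives $f(|x|^2)^2\leq f(|x|^4)$ and $f(|x|^4)^2\leq f(|x|^8)$. So the first factor is at most $\sqrt{f(|x|^4+|x|^8)}$.
\item In the second factor, $xx^\dagger$ and $x^\dagger x$ are projections, so $f(xx^\dagger),f(x^\dagger x)\in[0,1]$. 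Their squares are therefore dominated by the values themselves, and the second factor is at most $\sqrt{f(xx^\dagger+x^\dagger x)}$.
\end{itemize}
Now Lemma~\ref{young}(i) yields
\[
|f(x)|^2+f(x^*x)^2\leq \phi\left(\sqrt{f(|x|^4+|x|^8)}\right)+\psi\left(\sqrt{f(xx^\dagger+x^\dagger x)}\right).
\]

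\emph{Step 4: pass to norms.} Both $|x|^4+|x|^8$ and $xx^\dagger+x^\dagger x$ are positive, so $f$ of each is at most its norm. Since $\phi$ and $\psi$ are non-decreasing, the arguments can be replaced by $\sqrt{\||x|^4+|x|^8\|}$ and $\sqrt{\|xx^\dagger+x^\dagger x\|}$. Taking the supremum over $f$ then gives the stated bound on $dv^2(x)$.

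The main obstacle is Step 2. One must remember to insert the idempotent $x^\dagger x$ on the right, using $x=xx^\dagger x$, so that the second factor involves $x^\dagger x$ and not $xx^\dagger$. Otherwise the sum $xx^\dagger+x^\dagger x$ in the statement does not appear.
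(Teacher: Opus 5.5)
Your proof is correct and follows the paper's argument step for step. The chain is: Lemma~\ref{MP} for $|f(x)|^2$, the bound $f(x^*x)^2\leq f(|x|^4)f(x^\dagger x)$, Cauchy--Schwarz in $\mathbb{R}^2$, the estimates $f(|x|^2)^2\leq f(|x|^4)$, $f(|x|^4)^2\leq f(|x|^8)$ and $f(p)^2\leq f(p)$ for the projections $p$, then Young's inequality and $f(\cdot)\leq\|\cdot\|$. The only differences are cosmetic. You get the second-term bound directly from Lemma~\ref{c-s_inq} via $x=xx^\dagger x$, whereas the paper cites Lemma~\ref{MP}, which tacitly needs $(x^*x)(x^*x)^\dagger=x^\dagger x$. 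You also apply Young before passing to norms rather than after.
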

\begin{proof}
    Let $x\in\mathfrak{A}$ be a regular element, $f\in\mathfrak{S}(\mathfrak{A})$. Then
    \begin{eqnarray}\label{reg}
        |f(x)|^2+f(x^*x)^2\nonumber
       &\leq& f(|x|^2)f(xx^\dagger)+f(|x|^4)f(x^\dagger x)\quad\text{(by Lemma~\ref{MP})}\nonumber\\
        &\leq& \sqrt{f(|x|^2)^2+f(|x|^4)^2}\sqrt{f(xx^\dagger)^2+f(x^\dagger x)^2}\nonumber\\ &&\text{(as $pr+qs\leq\sqrt{p^2+q^2}\sqrt{r^2+s^2}$, $p,q,r,s\geq0$)}\nonumber\\
        &\leq& \sqrt{f(|x|^4)+f(|x|^8)}\sqrt{f(xx^\dagger)+f(x^\dagger x)}\quad\text{(by Lemma~\ref{sp})}\nonumber\\
        &=& \sqrt{\||x|^4+|x|^8\|}\sqrt{\|xx^\dagger+x^\dagger x\|}\\
        &\leq& \phi\left(\sqrt{\||x|^4+|x|^8\|}\right)+\psi\left(\sqrt{\|xx^\dagger+x^\dagger x\|}\right)\nonumber\quad\text{(by Lemma~\ref{young})}.
    \end{eqnarray} Taking the supremum over $f\in\mathfrak{S}(\mathfrak{A})$, we obtain the required result.
\end{proof}
\begin{remark}
    Choosing $\phi(t)=\psi(t)=\frac{t^2}{2}$ in Theorem~\ref{regthm} we obtain
    \[dv^2(x)\leq\frac{1}{2}\left(\||x|^4+|x|^8\|+\|xx^\dagger+x^\dagger x\|\right).\]
\end{remark}
\begin{corollary}
 Let $x\in\mathfrak{A}$ be a regular element. Then 
 \[dv^2(x)\leq\min\{\gamma,\delta\},\] where $\gamma=\sqrt{\||x|^4+|x|^8\|}\sqrt{\|xx^\dagger+x^\dagger x\|}$ and $\delta=\sqrt{\||x^*|^4+|x^*|^8\|}\sqrt{\|xx^\dagger+x^\dagger x\|}$.
\end{corollary}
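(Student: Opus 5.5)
The bound $\gamma$ is exactly the intermediate inequality \eqref{reg} established in the proof of Theorem~\ref{regthm}. That chain gives, for every $f\in\mathfrak{S}(\mathfrak{A})$,
\[|f(x)|^2+f(x^*x)^2\leq\sqrt{f(|x|^4)+f(|x|^8)}\,\sqrt{f(xx^\dagger)+f(x^\dagger x)}\leq\gamma .\]
Taking the supremum over $f$ yields $dv^2(x)\leq\gamma$. So the real content of the corollary is the second bound $dv^2(x)\leq\delta$. Once both are established, $dv^2(x)\leq\min\{\gamma,\delta\}$ follows at once.

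For $\delta$, I would apply the first bound to $x^*$ in place of $x$, and then relate the two Davis--Wielandt quantities. Since $x$ is regular, $x^*$ is regular with $(x^*)^\dagger=(x^\dagger)^*$. Hence
\[x^*(x^*)^\dagger=(x^\dagger x)^*=x^\dagger x,\qquad (x^*)^\dagger x^*=(xx^\dagger)^*=xx^\dagger,\]
using the self-adjointness conditions (iii) and (iv). It follows that $x^*(x^*)^\dagger+(x^*)^\dagger x^*=xx^\dagger+x^\dagger x$, and $|x^*|^4=(xx^*)^2$. Running the chain of Theorem~\ref{regthm} for $x^*$ then gives, for each state $f$,
\[|f(x^*)|^2+f(xx^*)^2\leq\delta .\]

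The main obstacle is that the left-hand side here is $|f(x)|^2+f(xx^*)^2$, not $|f(x)|^2+f(x^*x)^2$. In general $dv(x)\neq dv(x^*)$, so the step from $dv(x^*)$ to $dv(x)$ needs care. My first attempt would avoid this by not passing through $dv(x^*)$ at all. Instead I would mix the two estimates inside the proof for $x$ itself:
\begin{itemize}
\item For the term $|f(x)|^2=|f(x^*)|^2$, apply Lemma~\ref{MP} to $x^*$. This gives $|f(x)|^2\leq f(|x^*|^2)f(x^\dagger x)$.
\item For the term $f(x^*x)^2$, I need a factorization of the form $f(|x^*|^4)\cdot f(xx^\dagger)$. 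I would look for one by writing $x^*x=x^*(xx^\dagger)x$ and applying Lemma~\ref{c-s_inq}, or by using the polar-type identity $x^*x=x^\dagger(xx^*)x$. Once such a factorization is in hand, the Cauchy--Schwarz-type inequality $pr+qs\leq\sqrt{p^2+q^2}\sqrt{r^2+s^2}$ and Lemma~\ref{sp} finish the argument exactly as in \eqref{reg}.
\end{itemize}

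If that factorization fails, the fallback is to interpret the statement as the paper likely intends: $\delta$ arises from applying \eqref{reg} to $x^*$ together with the symmetry $|f(x)|=|f(x^*)|$. In that case the identification of the quadratic term remains the delicate point, and I would state clearly what is needed there.
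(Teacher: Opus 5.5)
Your derivation of $dv^2(x)\le\gamma$ from \eqref{reg}, and of $dv^2(x^*)\le\delta$ via $x^*(x^*)^\dagger+(x^*)^\dagger x^*=xx^\dagger+x^\dagger x$, is correct and matches the paper. The paper then finishes in one line using $dv(x)=dv(x^*)$.

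The gap is that you reject exactly this step. Your claim that $dv(x)\neq dv(x^*)$ in general is false, and the replacement you propose cannot work. The per-state factorization $f(x^*x)^2\le f(|x^*|^4)f(xx^\dagger)$ already fails for the regular element $x=e_{12}\in M_2(\mathbb{C})$, where $x^\dagger=x^*=e_{21}$, with the vector state $f(y)=\langle ye_2,e_2\rangle$. There $f(x^*x)=1$, while $f(|x^*|^4)=f(xx^\dagger)=f(e_{11})=0$. At this state $|f(x)|^2+f(x^*x)^2=1$ but $\sqrt{f(|x^*|^4+|x^*|^8)}\sqrt{f(xx^\dagger+x^\dagger x)}=0$. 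So the state-wise analogue of \eqref{reg} with $|x^*|$ in place of $|x|$ is false. The passage from $x^*$ back to $x$ must therefore be made at the level of suprema, and your fallback leaves exactly this step open.

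The missing ingredient is that $dv(x)=dv(x^*)$ holds for every $x$ in a unital $C^*$-algebra. Since $|f(x^*)|=|f(x)|$, $dv(x^*)$ is the radius of $\{(f(x),f(xx^*)):f\in\mathfrak{S}(\mathfrak{A})\}$, while $dv(x)$ is the radius of $\{(f(x),f(x^*x))\}$. Both sets lie in $\mathbb{C}\times[0,\infty)$. Hence each radius is the supremum of its support function over the directions $(e^{i\theta}\cos\varphi,\sin\varphi)$ with $\theta\in\mathbb{R}$ and $\varphi\in[0,\pi/2]$.

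In such a direction the two support values are $\sup_f f\big(\cos\varphi\,\mathrm{Re}(e^{-i\theta}x)+\sin\varphi\,x^*x\big)$ and $\sup_f f\big(\cos\varphi\,\mathrm{Re}(e^{-i\theta}x)+\sin\varphi\,xx^*\big)$, where $\mathrm{Re}(e^{-i\theta}x)=\tfrac12(e^{-i\theta}x+e^{i\theta}x^*)$. For $\varphi>0$, set $c=\tfrac12\cot\varphi\,e^{i\theta}$ and $y=x+c\mathbf{1}$. Then
\begin{align*}
\cos\varphi\,\mathrm{Re}(e^{-i\theta}x)+\sin\varphi\,x^*x&=\sin\varphi\,(y^*y-|c|^2\mathbf{1}),\\
\cos\varphi\,\mathrm{Re}(e^{-i\theta}x)+\sin\varphi\,xx^*&=\sin\varphi\,(yy^*-|c|^2\mathbf{1}).
\end{align*}
Because $\|y^*y\|=\|yy^*\|$, both support values equal $\sin\varphi(\|y\|^2-|c|^2)$. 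For $\varphi=0$ the two support values trivially coincide. Hence $dv(x)=dv(x^*)$, and your bound $dv^2(x^*)\le\delta$ yields $dv^2(x)\le\delta$.
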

\begin{proof}
    From equation~\eqref{reg} and taking the supremum over $f\in\mathfrak{S}(\mathfrak{A})$, we get $dv^2(x)\leq\gamma$. Since $dv(x)=dv(x^*)$, we obtain $dv^2(x)\leq\delta$.
\end{proof}
\begin{theorem}
    Let $x\in\mathfrak{A}$ be a regular element and $\phi$ be an Orlicz function. Then
    \[\phi(dv^2(x))\leq\frac{1}{2}\phi(v(|x|^4+xx^\dagger))+\frac{1}{2}\phi(v(2|x|^4)).\]
\end{theorem}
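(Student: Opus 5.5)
Fix $f\in\mathfrak{S}(\mathfrak{A})$. The idea is to bound $|f(x)|^2+f(x^*x)^2$ by one half of the sum of two quantities, and then apply the Hermite--Hadamard argument used repeatedly in Section~\ref{sec3}.

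For the first term, Lemma~\ref{MP} gives $|f(x)|^2\le f(|x|^2)f(xx^\dagger)$. By the arithmetic--geometric mean inequality,
\[
|f(x)|^2\le \tfrac12\left(f(|x|^2)^2+f(xx^\dagger)^2\right).
\]
Since $|x|^2\ge0$, Lemma~\ref{sp} with $a=\mathbf{1}$ and $r=2$ gives $f(|x|^2)^2\le f(|x|^4)$. The element $xx^\dagger$ is a self-adjoint idempotent, so it is a projection and $0\le f(xx^\dagger)\le1$. Hence
\[
f(xx^\dagger)^2\le f(xx^\dagger)=f\bigl((xx^\dagger)^2\bigr).
\]
Together these yield $|f(x)|^2\le\tfrac12 f(|x|^4+xx^\dagger)$.

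For the second term, Lemma~\ref{sp} again gives $f(x^*x)^2=f(|x|^2)^2\le f(|x|^4)=\tfrac12 f(2|x|^4)$. Adding the two estimates,
\[
|f(x)|^2+f(x^*x)^2\le \tfrac12 f(|x|^4+xx^\dagger)+\tfrac12 f(2|x|^4).
\]
Both arguments on the right are nonnegative, because $|x|^4+xx^\dagger$ and $2|x|^4$ are positive elements.

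Since $\phi$ is non-decreasing, we may apply it to both sides. The Hermite--Hadamard inequality, or simply convexity of $\phi$ at the midpoint, then gives
\[
\phi\bigl(|f(x)|^2+f(x^*x)^2\bigr)\le\tfrac12\phi\bigl(f(|x|^4+xx^\dagger)\bigr)+\tfrac12\phi\bigl(f(2|x|^4)\bigr).
\]
Each $f(\cdot)$ on the right is bounded by the corresponding numerical radius, namely $|f(|x|^4+xx^\dagger)|\le v(|x|^4+xx^\dagger)$ and $|f(2|x|^4)|\le v(2|x|^4)$. Using monotonicity of $\phi$ once more and then taking the supremum over $f$ on the left gives $\phi(dv^2(x))$, since $\phi$ is continuous and non-decreasing.

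The only delicate point is bounding $f(xx^\dagger)^2$. The cleanest route is the projection observation: $xx^\dagger$ is idempotent by property (ii) and self-adjoint by property (iv), so it is a projection and $f(xx^\dagger)\in[0,1]$. Alternatively, one can apply Lemma~\ref{sp} to $xx^\dagger\ge0$ to get $f(xx^\dagger)^2\le f((xx^\dagger)^2)=f(xx^\dagger)$, which gives the same bound directly.
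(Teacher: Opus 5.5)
Your proof is correct and follows the paper's argument essentially step for step. Both use Lemma~\ref{MP} with the arithmetic--geometric mean inequality, Lemma~\ref{sp} for $f(|x|^2)^2\le f(|x|^4)$ and $f(xx^\dagger)^2\le f(xx^\dagger)$, then the Hermite--Hadamard (midpoint convexity) step and the supremum over $f$; your explicit observation that $xx^\dagger$ is a projection spells out the idempotency the paper uses only implicitly.
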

\begin{proof}
     Let $x\in\mathfrak{A}$ be a regular element and $f\in\mathfrak{S}(\mathfrak{A})$. Then
     \begin{eqnarray*}
         |f(x)|^2+f(x^*x)^2&\leq& f(|x|^2)f(xx^\dagger)+f(|x|^2)^2\quad\text{(by Lemma~\ref{MP})}\\
         &\leq& \frac{f(|x|^2)^2+f(xx^\dagger)^2}{2}+f(|x|^2)^2\\
         &\leq& \frac{f(|x|^4+xx^\dagger)}{2}+\frac{1}{2}f(2|x|^4)\quad\text{(by Lemma~\ref{sp})}.
     \end{eqnarray*}
     As $\phi$ is non-decreasing and convex, we get
     \begin{eqnarray}\label{reg_eq}
       \phi(|f(x)|^2+f(x^*x)^2)&\leq& \displaystyle\int_{0}^{1}\phi\left(t(f(|x|^4+xx^\dagger))+(1-t)f(2|x|^4)\right)\,dt\nonumber\\
       &\leq&\frac{1}{2}\phi(f(|x|^4+xx^\dagger))+\frac{1}{2}\phi(f(2|x|^4))\\
       &\leq&\frac{1}{2}\phi(v(|x|^4+xx^\dagger))+\frac{1}{2}\phi(v(2|x|^4))\nonumber.
     \end{eqnarray} Taking the supremum over $f\in\mathfrak{S}(\mathfrak{A})$, we obtain the desired inequality.
\end{proof}
\begin{corollary}\label{reg_cor}
     Let $x\in\mathfrak{A}$ be a regular element. Then
     \[dv^2(x)\leq\frac{1}{2}\|3|x|^4+xx^\dagger\|.\]
\end{corollary}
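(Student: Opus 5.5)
This follows directly from inequality~\eqref{reg_eq} in the proof of the preceding theorem, specialised to $\phi(t)=t$, $t\geq0$. That choice is an Orlicz function (continuous, convex, non-decreasing, $\phi(0)=0$, unbounded). For a fixed state $f\in\mathfrak{S}(\mathfrak{A})$, inequality~\eqref{reg_eq} then reads
\[
|f(x)|^2+f(x^*x)^2\leq \tfrac{1}{2}f(|x|^4+xx^\dagger)+\tfrac{1}{2}f(2|x|^4).
\]
Equivalently, one can read this off the chain of estimates just before~\eqref{reg_eq}, which came from Lemma~\ref{MP}, the arithmetic–geometric mean inequality, and Lemma~\ref{sp} with $r=2$ applied to the positive elements $|x|^2$ and $xx^\dagger$.

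Next I will use linearity of $f$ to merge the right-hand side into a single functional value:
\[
\tfrac{1}{2}f(|x|^4+xx^\dagger)+\tfrac{1}{2}f(2|x|^4)=\tfrac{1}{2}f\bigl(3|x|^4+xx^\dagger\bigr).
\]
The element $3|x|^4+xx^\dagger$ is positive, since $|x|^4\geq0$ and $xx^\dagger$ is a self-adjoint idempotent, hence a projection. For a positive element $p$ and any state $f$ we have $0\leq f(p)\leq\|f\|\,\|p\|=\|p\|$. This gives $|f(x)|^2+f(x^*x)^2\leq\frac{1}{2}\|3|x|^4+xx^\dagger\|$.

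Finally, I will take the supremum over $f\in\mathfrak{S}(\mathfrak{A})$ on the left; by definition of $dv(\cdot)$ (the case $a=\mathbf{1}$) this supremum is $dv^2(x)$, and the claimed bound follows.

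There is no real obstacle here. The only point needing care is the passage from $f(\cdot)$ to the norm, which requires positivity of $3|x|^4+xx^\dagger$ (equivalently, that $v(p)=\|p\|$ for positive $p$). Positivity holds because $xx^\dagger$ is a projection by the Moore--Penrose conditions (ii) and (iv).
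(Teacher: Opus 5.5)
Your proposal is correct and matches the paper's proof: set $\phi(t)=t$ in \eqref{reg_eq}, merge the right-hand side by linearity into $\frac{1}{2}f(3|x|^4+xx^\dagger)$, bound this by $\frac{1}{2}\|3|x|^4+xx^\dagger\|$, and take the supremum over $f\in\mathfrak{S}(\mathfrak{A})$. Your remark that $3|x|^4+xx^\dagger$ is positive, because $xx^\dagger$ is a projection, spells out a step the paper leaves implicit.
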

\begin{proof}
  If we consider $\phi(t)=t$, $t\geq0$ in inequality~\eqref{reg_eq}, then we have
  \[|f(x)|^2+f(x^*x)^2\leq\frac{1}{2}f(|x|^4+xx^\dagger+2|x|^4)\leq\frac{1}{2}\|3|x|^4+xx^\dagger\|.\] Taking the supremum over $f\in\mathfrak{S}(\mathfrak{A})$, yields the required result.
\end{proof}
\begin{remark}
    From Corollary~\ref{reg_cor} it is easy to verify 
    \[dv^{2r}(x)\leq 2^{r-2}\|3|x|^{4r}+xx^\dagger\|,\quad\mbox{for $r\geq1$.}\]
\end{remark}
\begin{lemma}\label{lm-mp}
     Let $x\in\mathfrak{A}$ be a regular element, $f\in\mathfrak{S}(\mathfrak{A})$ and $\phi$ be an Orlicz function. Then
     \[
     \phi(|f(x)|^2+f(x^*x)^2)\leq\frac{1}{\sqrt{2}}\left|\phi(f(|x|^4+|x|^8))+i\phi(f(xx^\dagger+x^\dagger x))\right|.
     \]
\end{lemma}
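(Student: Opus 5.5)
The plan follows the chain already used in the proof of Theorem~\ref{regthm}, with $\phi$ applied at the end and then a convexity trick that produces the modulus of a complex number. Fix $f\in\mathfrak{S}(\mathfrak{A})$. By Lemma~\ref{MP}, applied to $x$ and, using $|f(x)|=|f(x^*)|$, in the form $f(x^*x)^2\le f(|x|^4)f(x^\dagger x)$, we are in the setting of inequality~\eqref{reg}. Set
\[
P=f(|x|^4+|x|^8),\qquad Q=f(xx^\dagger+x^\dagger x).
\]
The first three lines of \eqref{reg} then give $|f(x)|^2+f(x^*x)^2\le \sqrt{P}\sqrt{Q}$. These lines are: the Cauchy--Schwarz step $pr+qs\le\sqrt{p^2+q^2}\sqrt{r^2+s^2}$, followed by Lemma~\ref{sp} with $r=2$, which gives $f(|x|^2)^2\le f(|x|^4)$, $f(|x|^4)^2\le f(|x|^8)$ and $f(xx^\dagger)^2\le f(xx^\dagger)$. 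The last of these holds because $xx^\dagger$ is a self-adjoint idempotent, so $(xx^\dagger)^2=xx^\dagger$; the same applies to $x^\dagger x$.

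Next, by the AM--GM inequality, $\sqrt{P}\sqrt{Q}\le \frac{P+Q}{2}$. Since $\phi$ is non-decreasing and convex,
\[
\phi(|f(x)|^2+f(x^*x)^2)\le \phi\left(\tfrac{P+Q}{2}\right)\le \tfrac12\phi(P)+\tfrac12\phi(Q).
\]

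Finally, for nonnegative reals $u=\phi(P)$ and $v=\phi(Q)$ we have $\frac{u+v}{2}\le \frac{1}{\sqrt2}\sqrt{u^2+v^2}=\frac{1}{\sqrt2}|u+iv|$. This is just $(u+v)^2\le 2(u^2+v^2)$, i.e.\ $(u-v)^2\ge0$. Combining the three displays gives exactly the stated bound $\frac{1}{\sqrt2}\left|\phi(f(|x|^4+|x|^8))+i\phi(f(xx^\dagger+x^\dagger x))\right|$.

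The main point requiring care is the Lemma~\ref{sp} step. That lemma is stated for $a$-states and positive elements, so we need it with $a=\mathbf{1}$, and we need $|x|^2$, $|x|^4$, $xx^\dagger$, $x^\dagger x$ to be positive. They are: the first two are powers of $x^*x$, and the last two are projections. The rest is elementary real-variable convexity.
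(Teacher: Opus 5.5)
Your proof is correct and is essentially the paper's argument. The only difference is that you pass through the intermediate bound $\sqrt{P}\sqrt{Q}$ (the Cauchy--Schwarz step of \eqref{reg}) and then apply AM--GM, whereas the paper applies AM--GM directly to each product $f(|x|^2)f(xx^\dagger)$ and $f(|x|^4)f(x^\dagger x)$ and then uses Lemma~\ref{sp}; both routes reach the same bound $\frac{P+Q}{2}$, and the convexity step and the final step $\frac{u+v}{2}\le\frac{1}{\sqrt2}|u+iv|$ are identical.

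One small correction to your justification: the bound $f(x^*x)^2\le f(|x|^4)f(x^\dagger x)$ comes from Lemma~\ref{MP} applied to the regular element $x^*x$, using $|x^*x|^2=|x|^4$ and $(x^*x)(x^*x)^\dagger=x^\dagger x$. The identity $|f(x)|=|f(x^*)|$ plays no role in it.
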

\begin{proof}
     Let $x\in\mathfrak{A}$ be a regular element and $f\in\mathfrak{S}(\mathfrak{A})$. Then
     \begin{eqnarray*}
         |f(x)|^2+f(x^*x)^2&\leq& f(|x|^2)f(xx^\dagger)+f(|x|^4)f(x^\dagger x)\quad\text{(by Lemma~\ref{MP})}\\
         &\leq&\frac{1}{2}\left[f(|x|^2)^2+f(xx^\dagger)^2\right]+\frac{1}{2}\left[f(|x|^4)^2+f(x^\dagger x)^2\right]\\
         &\leq& \frac{1}{2}\left[f(|x|^4+|x|^8)\right]+\frac{1}{2}\left[f(xx^\dagger+x^\dagger x)\right]\quad\text{(by Lemma~\ref{sp})}.
     \end{eqnarray*}
     Since $\phi$ is non-decreasing and convex, we have
     \begin{eqnarray*}\label{iequ}
      \phi(|f(x)|^2+f(x^*x)^2)&\leq&\displaystyle\int_{0}^{1}\phi\left(t(f(|x|^4+|x|^8))+(1-t)f(xx^\dagger+x^\dagger x)\right)\,dt \nonumber \\
      &\leq& \frac{1}{2}\phi(f(|x|^4+|x|^8))+\frac{1}{2}\phi(f(xx^\dagger+x^\dagger x))\nonumber\\
      &\leq&\sqrt{2}\left|\frac{1}{2}\left(\phi(f(|x|^4+|x|^8))+i\phi(f(xx^\dagger+x^\dagger x))\right)\right|\quad\text{(as $|a+b|\leq\sqrt{2}|a+ib|,\ a,b\in\mathbb{R}$)}.
     \end{eqnarray*} This completes the proof.
\end{proof}
\begin{theorem}
    Let $x\in\mathfrak{A}$ be a regular element. Then
    \begin{eqnarray}\label{mpeq1}
     dv^2(x)\leq\frac{1}{\sqrt{2}}v(|x|^4+|x|^8+i(xx^\dagger+x^\dagger x))\quad\mathrm{and}
     \end{eqnarray}
      \begin{eqnarray}\label{mpeq2}
      dv^2(x)\leq\frac{1}{\sqrt{2}}v(|x^*|^4+|x^*|^8+i(xx^\dagger+x^\dagger x)).
     \end{eqnarray} 
\end{theorem}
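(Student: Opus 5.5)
Apply Lemma~\ref{lm-mp} with the Orlicz function $\phi(t)=t$, $t\ge 0$, and then recognise the right-hand side as the modulus of a single state value. For $f\in\mathfrak{S}(\mathfrak{A})$, the lemma (or the intermediate chain in its proof) gives
\[
|f(x)|^2+f(x^*x)^2\leq \frac{1}{\sqrt{2}}\left|f(|x|^4+|x|^8)+i\,f(xx^\dagger+x^\dagger x)\right|.
\]
Since $f$ is linear, $f(|x|^4+|x|^8)+i f(xx^\dagger+x^\dagger x)=f\big(|x|^4+|x|^8+i(xx^\dagger+x^\dagger x)\big)$. The modulus of this quantity is at most $v\big(|x|^4+|x|^8+i(xx^\dagger+x^\dagger x)\big)$. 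Taking the supremum over $f\in\mathfrak{S}(\mathfrak{A})$ on the left yields $dv^2(x)$, which proves \eqref{mpeq1}.

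The second inequality \eqref{mpeq2} will follow by symmetry. We use $dv(x)=dv(x^*)$, which was already noted in the proof of the corollary after Theorem~\ref{regthm}. Since $x$ is regular, so is $x^*$, with $(x^*)^\dagger=(x^\dagger)^*$. Applying \eqref{mpeq1} to $x^*$ gives the bound
\[
\frac{1}{\sqrt{2}}\,v\big(|x^*|^4+|x^*|^8+i(x^*(x^*)^\dagger+(x^*)^\dagger x^*)\big).
\]
By the Moore--Penrose conditions (iii) and (iv),
\[
x^*(x^\dagger)^*=(x^\dagger x)^*=x^\dagger x \quad\text{and}\quad (x^\dagger)^*x^*=(xx^\dagger)^*=xx^\dagger .
\]
So the imaginary part is again $xx^\dagger+x^\dagger x$, and this gives \eqref{mpeq2}.

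The main point needing care is the step where $\phi$ is the identity. The final chain in Lemma~\ref{lm-mp} is applied to the real numbers $\phi(f(\cdot))$, and with $\phi(t)=t$ these are exactly the real and imaginary parts of $f(h+ik)$, where $h=|x|^4+|x|^8$ and $k=xx^\dagger+x^\dagger x$ are self-adjoint, so $f(h),f(k)\in\mathbb{R}$. The inequality $|a+b|\le\sqrt2|a+ib|$ for real $a,b$ then gives the displayed bound directly. Everything else is linearity of states and the definition of $v$.
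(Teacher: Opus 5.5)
Your proposal is correct and follows the paper's proof: you specialize Lemma~\ref{lm-mp} to $\phi(t)=t$, use linearity to write $f(h)+if(k)=f(h+ik)$, bound this by $v(h+ik)$, and take the supremum, then obtain \eqref{mpeq2} from $dv(x)=dv(x^*)$. Your explicit check that $x^*(x^*)^\dagger+(x^*)^\dagger x^*=x^\dagger x+xx^\dagger$, using Moore--Penrose conditions (iii) and (iv), supplies the detail the paper leaves under ``proceeding similarly.''
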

\begin{proof}
    If we choose $\phi(t)=t$, $t\geq0$ in Lemma~\ref{lm-mp} and take the supremum over $f\in\mathfrak{S}(\mathfrak{A})$, we get the inequality~\eqref{mpeq1}. Also, proceeding similarly and using the fact that $dv(x)=dv(x^*)$ we get the inequality~\eqref{mpeq2}.
\end{proof}
We conclude this section by providing an upper bound of $dv^4(x)$ in terms of norm.
\begin{corollary}
    Let $x\in\mathfrak{A}$ be a regular element. Then
    \[dv^4(x)\leq\||x|^8+|x|^{16}+xx^\dagger+x^\dagger x\|.\]
\end{corollary}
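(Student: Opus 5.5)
Start from the pointwise estimate already obtained in the proof of Lemma~\ref{lm-mp}. For every $f\in\mathfrak{S}(\mathfrak{A})$, that argument (Lemma~\ref{MP}, the arithmetic--geometric mean inequality and Lemma~\ref{sp}) gives
\[
|f(x)|^2+f(x^*x)^2\leq \tfrac12 f(|x|^4+|x|^8)+\tfrac12 f(xx^\dagger+x^\dagger x).
\]
Square both sides; both are non-negative, so this is legitimate. Then apply convexity of $t\mapsto t^2$, i.e. $\left(\frac{p+q}{2}\right)^2\leq\frac{p^2+q^2}{2}$ (the case $\phi(t)=t^2$ of the Orlicz-function inequality from Section~\ref{sec2}). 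This yields
\[
\left(|f(x)|^2+f(x^*x)^2\right)^2\leq \tfrac12\left[f(|x|^4+|x|^8)^2+f(xx^\dagger+x^\dagger x)^2\right].
\]

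Next, apply Lemma~\ref{sp} with $r=2$ (here $a=\mathbf{1}$) to each square. Both $|x|^4+|x|^8$ and $xx^\dagger+x^\dagger x$ are positive, the latter because it is a sum of self-adjoint idempotents. This gives $f(|x|^4+|x|^8)^2\leq f\big((|x|^4+|x|^8)^2\big)$ and $f(xx^\dagger+x^\dagger x)^2\leq f\big((xx^\dagger+x^\dagger x)^2\big)$. The goal is to compare these with $f(|x|^8+|x|^{16})$ and $f(xx^\dagger+x^\dagger x)$.

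This comparison is the main obstacle, because squaring produces cross terms. For the first bracket, $(|x|^4+|x|^8)^2=|x|^8+2|x|^{12}+|x|^{16}\leq 2(|x|^8+|x|^{16})$. Here functional calculus in the commutative algebra generated by $|x|$ and the scalar inequality $2t^{12}\leq t^8+t^{16}$ are used. For the second, write $p=xx^\dagger$ and $q=x^\dagger x$, both projections. Then $(p+q)^2\leq 2(p^2+q^2)=2(p+q)$, using $(p+q)^2+(p-q)^2=2(p^2+q^2)$ and $(p-q)^2\geq0$.

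Combining the three steps,
\[
\left(|f(x)|^2+f(x^*x)^2\right)^2\leq f\big(|x|^8+|x|^{16}+xx^\dagger+x^\dagger x\big)\leq\big\||x|^8+|x|^{16}+xx^\dagger+x^\dagger x\big\|,
\]
since $f$ is a state and the element is positive. Taking the supremum over $f\in\mathfrak{S}(\mathfrak{A})$ gives $dv^4(x)$ on the left, which proves the corollary. The one point to double-check is the factor bookkeeping: the $\frac12$ from convexity should exactly absorb the factor $2$ from the two operator inequalities, giving constant $1$.
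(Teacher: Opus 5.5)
Your proof is correct and is essentially the paper's argument carried out state by state. The paper squares \eqref{mpeq1} and invokes $v^2(b+ic)\leq\|b^2+c^2\|$ with $b=|x|^4+|x|^8$ and $c=xx^\dagger+x^\dagger x$; since $|f(b+ic)|^2=f(b)^2+f(c)^2$, this is exactly your convexity step combined with Lemma~\ref{sp} for $r=2$, followed by the same operator bounds $b^2\leq 2(|x|^8+|x|^{16})$ and $c^2\leq 2c$, which you justify explicitly where the paper leaves them implicit.
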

\begin{proof}
    From inequality~\eqref{mpeq1}, we have
    \begin{eqnarray*}
        dv^4(x)
        &\leq&\frac{1}{2}v^2(|x|^4+|x|^8+i(xx^\dagger+x^\dagger x))\\
        &\leq& \frac{1}{2}\|(|x|^4+|x|^8)^2+(xx^\dagger+x^\dagger x)^2\|\quad\text{(as $v^2(b+ic)\leq\|b^2+c^2\|$, where $b,c$  self-adjoint)}\\
        &\leq&\frac{1}{2}\|2(|x|^8+|x|^{16})+2(xx^\dagger+x^\dagger x)\|\\
        &=&\||x|^8+|x|^{16}+xx^\dagger+x^\dagger x\|.
    \end{eqnarray*}
\end{proof}


\section*{Declarations}
\begin{itemize}
    \item \textit{Authors' contributions:} The authors contributed equally to this work and approved the final manuscript.
    \item \textit{Funding:} This research did not receive external funding.
    \item \textit{Data availability:} No data were used to support this study.
    \item \textit{Conflict of interest:} The authors declare that they have no conflict of interest.
\end{itemize}

\end{document}